\newcommand{\supp}{\mathrm{supp}}
\newcommand{\PP}[1]{\mathcal P_{#1}}
\newcommand{\Ht}{\mathrm{Ht}}
\newcommand{\NN}{\mathcal{N}}
\newcommand{\Sr}{\mathbf{Sr}}
\newcommand{\kk}{\mathbb{K}}
\newcommand{\MFFunctor}[1]{\underline{\mathbf{Mf}}_{#1}}
\newcommand{\MFScheme}[1]{\mathbf{Mf}_{#1}}
\newcommand{\HScheme}[2]{\mathbf{Hilb}_{#1}^{#2}}
\newcommand{\rid}[1]{\longrightarrow_{#1}}
\newcommand{\crid}[1]{\longrightarrow_{#1}^+}
\newtheorem{theorem}{Theorem}[section]
\newtheorem{corollary}[theorem]{Corollary}
\newtheorem{proposition}[theorem]{Proposition}
\newtheorem{lemma}[theorem]{Lemma}
\theoremstyle{definition}
\newtheorem{definition}[theorem]{Definition}
\newtheorem{example}[theorem]{Example}
\theoremstyle{remark}
\newtheorem{remark}[theorem]{Remark}
\numberwithin{equation}{section}
\newtheorem{algo}[theorem]{Algorithm}
\title[Cohen-Macaulay, Gorenstein, complete intersection by marked bases]{Cohen-Macaulay, Gorenstein and complete intersection conditions by marked bases}
\author[C.~Bertone]{Cristina Bertone}
\address{Dipartimento di Matematica \lq\lq G.~Peano\rq\rq\\ Universit\`a degli Studi di Torino\\ 
         Via Carlo Alberto 10\\ 10123 Torino\\ Italy.}
\email{\href{mailto:cristina.bertone@unito.it}{cristina.bertone@unito.it}}
\urladdr{\url{https://sites.google.com/view/cristinabertone/}}
\author[F.~Cioffi]{Francesca Cioffi}
\address{Dip.~di Matematica e Appl. \\ Universit\`a degli Studi di Napoli Federico II\\ Via Cintia \\ 80126 Napoli \\ Italy.}
\email{\href{mailto:cioffifr@unina.it}{cioffifr@unina.it}}
\author[M.~Orth]{Matthias Orth}
\address{Institute of Mathematics\\ University of Kassel\\ 
         34109 Kassel, Germany.}
\email{\href{mailto:morth@mathematik.uni-kassel.de}{morth@mathematik.uni-kassel.de}}
\author[W.~Seiler]{Werner Seiler}
\address{Institute of Mathematics\\ University of Kassel\\ 
         34109 Kassel, Germany.}
\email{\href{mailto:seiler@mathematik.uni-kassel.de}{seiler@mathematik.uni-kassel.de}}
\urladdr{\url{http://www.mathematik.uni-kassel.de/~seiler/}}
\subjclass[2020]{13C14, 13P10, 14J10, 14M05, 14Q15, 68W30}
\keywords{Marked basis and scheme, Cohen-Macaulay ideal, Gorenstein ideal, complete intersection, Hilbert scheme}
\begin{document}

\begin{abstract}
Using techniques coming from the theory of marked bases, we develop new computational methods for detection and construction of Cohen-Macaulay, Gorenstein and complete intersection homogeneous polynomial ideals. Thanks to the functorial properties of marked bases, an elementary and effective proof of the openness of arithmetically Cohen-Macaulay, arithmetically Gorenstein and strict complete intersection loci in a Hilbert scheme follows, for a non-constant Hilbert polynomial.
\end{abstract}

\maketitle

\section*{Introduction}

Marked bases are special sets of generators of polynomial
ideals in enough generic position. They have nice theoretical and computational properties which are similar to those of Gr\"obner bases. However, in contrast to Gr\"obner bases, they are able to provide an open cover of  Hilbert schemes (see \cite{Quot,CMR2015} and the references therein). Among other properties, this feature has already been applied to investigate problems in Commutative Algebra and Algebraic Geometry (see \cite{Gore}, for instance).

Inspired by the article \cite{Seiler2012}, we use techniques coming from the theory of marked bases to develop constructive characterizations of Cohen-Macaulay, Gorenstein and complete intersection homogeneous polynomial ideals by conditions on the coefficients of the polynomials of the marked bases of their Artinian reductions. These conditions even characterize the coefficients of the polynomials of the marked bases we consider due to the procedure that is used to obtain the Artinian reductions (see Proposition \ref{prop:Artinian reduction}).

Among the important roles of these ideals in several contexts, it is relevant that they satisfy some openness conditions, such as the Cohen-Macaulay locus and the Gorenstein locus in a Hilbert scheme are open subsets (\cite[Théorème~(12.2.1)(vii)]{Gro} and~\cite{Stoia}) and the family of (strict) complete intersection curves in $\mathbb P^3$ is an open subset which may not be closed (for example, see \cite[Exercises~1.3 and~1.4]{Hart-Def}). Moreover, the Nagata criterion holds for Gorenstein and complete intersection properties in rings (see \cite{GM} and \cite{Kimura} for a module version). 

By means of our constructive characterizations, we obtain an elementary proof of the openness of the three loci of points in a Hilbert scheme corresponding to either arithmetically Cohen-Macaulay schemes or arithmetically Gorenstein schemes or strict complete intersection schemes, for every non-constant Hilbert polynomial, also providing an explicit representation via suitable equations and inequalities. Up to our knowledge, in this generality, these results are a novelty.

For a presentation of Cohen-Macaulay, Gorenstein and complete intersection rings and corresponding closed projective schemes, we refer to \cite{Hart-book, Bass, Huneke, Kunz, IK, Ei, Mi}.

Concerning constructive approaches to the study of these objects, general structure theorems of Gorenstein ideals are given in \cite{BE-Gore} in codimension~$3$ and discussed in \cite{Reid2015} for a generalization to codimension~$4$. Some explicit constructions of Gorenstein ideals are given in specific cases (for example, see \cite{NS1,NS2,Elias-Rossi}). See also \cite{AbCo,SaSt} and the references therein for some recent related interesting investigations and open problems. 
Nevertheless, in an affine framework, a study of general constructions of Gorenstein and complete intersection ideals can be found in~\cite{KLR2020,KLR2022} by means of properties of border bases and border schemes, whose relations with marked bases and marked schemes are investigated in \cite{BC2022}. 

In our paper, we develop general constructions in every codimension and every polynomial ring on a field in terms of marked bases. Here is an outline of the content. 

After recalling the main features of marked bases and marked schemes and their relations with Hilbert schemes, which we will refer to when necessary (see Sections~\ref{sec:preliminaries} and \ref{sec:marked functors}), the first relevant step consists in observing that, up to a deterministic linear change of variables, marked schemes over quasi-stable ideals that are Cohen-Macaulay parameterize all the Cohen-Macaulay homogeneous polynomial ideals (see Theorem \ref{th:CM} and Corollary~\ref{cor:CM}). We also provide a method to recognize Cohen-Macaulay ideals among those having a marked basis over a truncation of any saturated quasi-stable ideal (see Algorithm~\ref{algo 1} and Theorem~\ref{th:saturation}).

In a marked scheme over a Cohen-Macaulay quasi-stable ideal, a Gorenstein ideal can be recognized by the dimension of the socle of its Artinian reduction even in terms of marked bases. Hence, thanks to Theorem~\ref{th:socle}, we obtain a new constructive method for Gorenstein homogeneous ideals by a characterization of Artinian Gorenstein homogeneous ideals in terms of the shape of their marked bases (see Corollary~\ref{cor:condition Gore}). Moreover, for a non-constant Hilbert polynomial, the openness of the arithmetically Gorenstein locus in a Hilbert scheme follows (see Corollary~\ref{cor:open Gore}). 

Regarding complete intersection ideals, we first focus on the process of distinguishing such ideals using the expected number of their minimal generators. We propose a solution that performs minimization of marked bases of homogeneous Artinian ideals by linear algebra only (see Section \ref{sec: CI conditions}). This minimization procedure is developed using the notion of border basis in the homogeneous framework of our paper. Like for the arithmetically Gorenstein property, an explicit characterization of complete intersection Artinian homogeneous ideals follows (see Corollary \ref{cor: CI}), together with an elementary proof of the openness of the strict complete intersection locus in a Hilbert scheme, for a non-constant Hilbert polynomial (see Corollary~\ref{cor:open CI}). 
Then, we focus on the more general task to construct a regular sequence contained in a given polynomial ideal. For this task we obtain a qualitative answer, adapting to marked bases a result of Eisenbud and Sturmfels, which has been developed in~\cite{EiStu1994} for Gr\"obner bases (see Theorem~\ref{theorem:EiStu theorem}).

Throughout the paper, we recall the definitions that are needed and give examples and applications of the computational methods that arise from our results.

\section{Preliminaries on marked bases}
\label{sec:preliminaries}
 
Let $R:=\kk[x_0,\dots,x_n]$ be the polynomial ring over a field $\kk$ in $n+1$ variables ordered as $x_0<x_1<\dots<x_n$,
and $\mathbb T=\{x_0^{\alpha_0}x_1^{\alpha_1}\dots x_n^{\alpha_n} : (\alpha_0,\dots,\alpha_n)\in \mathbb Z^n_{\geq 0}\}$ be the set of its terms. 
For every term $x^\alpha:=x_0^{\alpha_0}x_1^{\alpha_1}\dots x_n^{\alpha_n}\not=1$ we denote by $\min(x^{\alpha}):=\min_{i=0,\dots,n}\{x_i :  \alpha_i\not=0 \}$ the minimum variable that appears in $x^{\alpha}$ with a non-null exponent. Analogously, we set $\max(x^\alpha):=\max_{i=0,\dots,n}\{x_i :  \alpha_i\not=0 \}$. Given a term $x^\alpha$, a variable $x_i$ is called a {\em multiplicative variable} of $x^\alpha$ if $x_i\leq \min(x^\alpha)$, otherwise it is called a {\em non-multiplicative variable} of $x^\alpha$.

An ideal $J$ is said a {\em monomial ideal} if it is generated by terms. The minimum set of generators of a monomial ideal $J$ made of terms is denoted by $B_J$. The {\em sous-escalier of $J$} is the set $\mathcal N(J)$ made of the terms outside $J$.

For every Noetherian $\kk$-algebra $A$, we set $R_A:=A\otimes R=A[x_0,\dots,x_n]$ and consider the standard grading for which $\deg(x_i)=1$, for every $i\in \{0,\dots,n\}$, and $\deg(a)=0$ for every $a\in A$. The degree of a term $x^\alpha$ is $\vert \alpha \vert=\sum_i \alpha_i$.

For every set $N$ of homogeneous polynomials in $R_A$, we denote by $(N)$ the ideal generated by $N$ and by $\langle N\rangle_A$ the $A$-module generated by $N$ over $A$. Moreover, for every integer $t$, we denote by $N_{\geq t}$  the set of the homogeneous polynomials of degree $\geq t$ of $N$ and by $N_t$ the set of the homogeneous polynomials of $N$ of degree $t$. 

For a homogeneous ideal $I\subset R_A$, we continue to write $I_{\geq t}$ even to denote the ideal $(I_{\geq t})$ and $I_t$ to denote the $A$-module $\langle I_t\rangle_A$. The Hilbert function $H_{R_A/I}$ is the function $H_{R_A/I}: \mathbb Z \to \mathbb Z$ such that $H_{R_A/I}(t)$ is the number of generators of a $A$-basis of $(R_A/I)_t$, being $(R_A/I)$ a free module. For $t\gg 0$, $H_{R_A/I}(t)$ assumes the same value of a numerical polynomial $p(z)$ that is called Hilbert polynomial. 

A monomial ideal $J$ is said {\em quasi-stable} if for every term $x^\tau \in J$ and every non-multiplicative variable $x_k>\min(x^\tau)$ of $x^\tau$ there exists an exponent $s_k$ such that the term $\frac{x^\tau}{\min(x^\tau)}x_k^{s_k}$ belongs to $J$.

A monomial ideal $J$ is quasi-stable if and only if there is a (unique) finite set of generators $\mathcal P_J$ of $J$ made of terms such that, for every term $x^\tau\in J\setminus \mathcal P_J$, there exists a unique term $x^\sigma \in \mathcal P_J$ so that $x^\tau=x^\delta x^\sigma$ with $\max(x^\delta) \leq \min(x^\sigma)$. The set $\mathcal P_J$, which is called the {\em Pommaret basis of $J$}, contains $B_J$. When $\mathcal P_J$ is equal to $B_J$, the ideal $J$ is said a {\em stable} ideal. Every Artinian monomial ideal is quasi-stable.

For any homogeneous ideal $I\subseteq R$ we denote by $\mathrm{sat}(I)$ its satiety, which is the minimum integer $s$ such that $I_s$ is equal to the homogeneous part of degree $s$ of the saturation $I^{sat}:=\{f\in R \vert \ \forall i\in \{0,\dots,n\} \ \exists k_i\in \mathbb N : x_i^{k_i}f \in I \}$ of $I$. The satiety of a quasi-stable ideal $J$ coincides with the maximum degree of a term  divisible by the last variable in the Pommaret basis of $J$.

A {\em marked polynomial $f$} is a polynomial together with a given term $x^\alpha$ that appears in $f$ with coefficient equal to $1_A$ and which is said {\em head term of $f$} and denoted by $\mathrm{Ht}(f)$. Usually we will write $f_\alpha$ to denote a marked polynomial with head term equal to $x^\alpha$.

\begin{definition}\label{def:sets and bases}
A {\em $\mathcal P_J$-marked set} $\mathcal H=\{h_\alpha\}_{x^\alpha \in \mathcal P_J}$ is a set of homogeneous marked polynomials such that, for every term $x^\alpha \in \mathcal P_J$, there exists a unique polynomial $h_\alpha\in \mathcal H$ such that every term other than $\mathrm{Ht}(f)=x^\alpha$ that appears in $h_\alpha$ with a non-null coefficient belongs to $\mathcal N(J)$. A $\mathcal P_J$-marked set $\mathcal H$ is said a {\em $\mathcal P_J$-marked basis} if $(R_A)_t=(\mathcal H)_t \oplus \langle \mathcal N(J)_t\rangle_A$, for every degree $t$.
\end{definition}

When we say that an ideal $I$ has a marked set (resp.~basis)  over a quasi stable ideal~$J$ we mean that $I$ is generated by a $\mathcal P_J$-marked set (resp.~basis).

For every $\mathcal P_J$-marked set $\mathcal H$ and for every integer $t$ we consider 
\begin{equation}\label{eq:multipli}
\mathcal H^{(t)}:=\{x^\delta h_\alpha : h_\alpha \in \mathcal H, x^\delta=1 \text{ or } \max(x^\delta)\leq \min(x^\alpha), \deg(x^\delta x^\alpha)=t\}.
\end{equation}
If a polynomial $x^\delta h_\alpha$ belongs to $\mathcal H^{(t)}$ we say that $x^\delta x^\alpha$ is its head term.

\begin{definition}\label{def:riscrittura}
Given a $\mathcal P_J$-marked set $\mathcal H=\{h_\alpha\}_{x^\alpha \in \mathcal P_J}$, for every integer $t$ we denote by $\rid{\mathcal H^{(t)}}$ the transitive closure of the relation $f \rid{\mathcal H^{(t)}} f-\lambda x^\delta h_\alpha$, where $f$ is a polynomial, $x^\delta h_\alpha$ belongs to $\mathcal H^{(t)}$ and $x^\delta x^\alpha$ is a term that appears in $f$ with coefficient $\lambda$. 
We will write $f \crid{\mathcal H^{(t)}} g$ if $f \rid{\mathcal H^{(t)}} g$ and $g \in \langle \mathcal N(J) \rangle_A$. 
\end{definition}

The relation $\rid{\mathcal H^{(t)}}$ gives rise to a rewriting procedure that, for every polynomial~$f$, provides the following unique {\em standard representation} 
\begin{equation}\label{eq:standard representation}
f=\sum_{h\in \mathcal H} P_h h +g,
\end{equation}
where $P_h$ is a linear combination of terms made of powers of multiplicative variables of $\mathrm{Ht}(h)$ and $g$ belongs to $\langle \mathcal N(J)\rangle_A$ (see \cite[Proposition 4.11]{Quot}). The polynomial $g$ is denoted by $\mathrm{Rf}_I(f)$ and called the {\em reduced form of $f$ by $I$}. 

By Definition \ref{def:sets and bases} a $\mathcal P_J$-marked set $\mathcal H$ is a $\mathcal P_J$-marked basis if and only if for every polynomial $f$ there is a unique polynomial $p\in I$ such that $f-p \in \langle \mathcal N(J)\rangle_A$. In this case the reduced form of $f$ is called the {\em normal form of $f$ by $I=(\mathcal H)$} and denoted by $\mathrm{Nf}_I(f)$.

It will be useful to collect the \lq\lq coefficient polynomials\rq\rq $P_h$ of a standard representation~\eqref{eq:standard representation} in a vector that we denote by $\Sr(f-g)$, given an order for the polynomials in $\mathcal H$.

The relation $\rid{\mathcal H^{(t)}}$ is Noetherian, and even confluent (see \cite[Propositions 4.8 and~4.11]{Quot}) thanks to the uniqueness of standard representations. This property also implies the additivity of $\rid{\mathcal H^{(t)}}$, for which~$\Sr(f+f')= \Sr(f)+\Sr(f')$, for every two polynomials $f,f'$.

\begin{theorem} \cite[Corollary 4.15 and Theorem 4.18]{Quot} \label{th:equivalent conditions}
Let $\mathcal H=\{h_\alpha\}_{x^\alpha \in \mathcal P_J}$ be a $\mathcal P_J$-marked set. The following conditions are equivalent:
\begin{itemize}
\item[(i)] $\mathcal H$ is a $\mathcal P_J$-marked basis.

\item[(ii)] $(\mathcal H)_t = \langle \mathcal H^{(t)}\rangle_A$, for every $t\leq \mathrm{reg}(J)+1$.

\item[(iii)] $I_t \cap \langle \mathcal N(J)_t\rangle_A=\{0\}$, for every $t\leq \mathrm{reg}(J)+1$.

\item[(iv)] $x_i h_\alpha \crid{\mathcal H^{(t)}} 0$, for every $h_\alpha\in \mathcal H$, $x_i>\min(x^\alpha)$ and $\deg(x_ix^\alpha)=t$.
\end{itemize}
\end{theorem}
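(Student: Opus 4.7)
The plan is to establish the implication $(\mathrm{i})\Rightarrow(\mathrm{ii})$, the pointwise equivalences $(\mathrm{ii})\Leftrightarrow(\mathrm{iii})\Leftrightarrow(\mathrm{iv})$ within the range $t\leq\mathrm{reg}(J)+1$, and finally the substantive implication $(\mathrm{iv})\Rightarrow(\mathrm{i})$, which requires promoting the conditions from that finite range to all degrees. As a preliminary linear-algebraic observation, the Pommaret-basis characterization of $J$ gives a bijection between the multiplicative products $(x^\delta,x^\alpha)$ indexing $\mathcal H^{(t)}$ and the monomials in $J_t$. Consequently the head terms of the elements of $\mathcal H^{(t)}$ are pairwise distinct monomials that exhaust $J_t$, and so $\langle\mathcal H^{(t)}\rangle_A$ is a free $A$-module of rank $|J_t|$ with $\langle\mathcal H^{(t)}\rangle_A\cap\langle\mathcal N(J)_t\rangle_A=\{0\}$ unconditionally.

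Both $(\mathrm{i})\Rightarrow(\mathrm{ii})$ and $(\mathrm{iii})\Rightarrow(\mathrm{ii})$ follow pointwise in $t$ from the standard representation \eqref{eq:standard representation}: for any $f\in(\mathcal H)_t$, write $f=\sum_h P_h h+g$ with $g\in\langle\mathcal N(J)_t\rangle_A$, and use either $(\mathrm{i})$ or $(\mathrm{iii})$ to force $g=0$, giving $f\in\langle\mathcal H^{(t)}\rangle_A$. The reverse $(\mathrm{ii})\Rightarrow(\mathrm{iii})$ is contained in the preliminary observation above. The direction $(\mathrm{ii})\Rightarrow(\mathrm{iv})$ is immediate, since $x_i h_\alpha\in(\mathcal H)_{|\alpha|+1}=\langle\mathcal H^{(|\alpha|+1)}\rangle_A$ is precisely $x_i h_\alpha\crid{\mathcal H^{(|\alpha|+1)}}0$. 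For $(\mathrm{iv})\Rightarrow(\mathrm{ii})$ I would show inductively that every product $x^\gamma h_\alpha$ of degree $t\leq\mathrm{reg}(J)+1$ lies in $\langle\mathcal H^{(t)}\rangle_A$, peeling off non-multiplicative factors of $x^\alpha$ one at a time via $(\mathrm{iv})$ and using the Noetherianity and confluence of $\rid{\mathcal H^{(t)}}$ recorded in the excerpt to guarantee termination.

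The heart of the proof is $(\mathrm{iv})\Rightarrow(\mathrm{i})$. From the previous step we know $(\mathcal H)_t=\langle\mathcal H^{(t)}\rangle_A$ for every $t\leq\mathrm{reg}(J)+1$; to extend this to all degrees I would invoke the Pommaret characterization of Castelnuovo-Mumford regularity for quasi-stable ideals, namely $J_t=R_1\cdot J_{t-1}$ for every $t>\mathrm{reg}(J)$. This forces $(\mathcal H)_t=R_1\cdot(\mathcal H)_{t-1}$ (no new Pommaret generators appear in those degrees), and combined with the sub-claim $x_i\cdot\mathcal H^{(t-1)}\subseteq\langle\mathcal H^{(t)}\rangle_A$ for every variable $x_i$ — trivial when $x_i$ is multiplicative for the head term, and a consequence of $(\mathrm{iv})$ otherwise via the same peeling induction as above — a degree induction yields $(\mathcal H)_t=\langle\mathcal H^{(t)}\rangle_A$ for all $t$. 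The decomposition in $(\mathrm{i})$ is then immediate from the preliminary observation: the intersection is zero, and the sum equals $(R_A)_t$ because the monomials of degree $t$ are partitioned between $J_t$ and $\mathcal N(J)_t$.

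The main obstacle is the Buchberger-style rewriting induction underlying both $(\mathrm{iv})\Rightarrow(\mathrm{ii})$ and the sub-claim $x_i\cdot\mathcal H^{(t-1)}\subseteq\langle\mathcal H^{(t)}\rangle_A$: the delicate point is choosing a well-founded ordering on the intermediate monomial factors $x^\gamma$ that strictly decreases under each application of $(\mathrm{iv})$, ensuring that the recursive reduction terminates inside $\langle\mathcal H^{(t)}\rangle_A$ rather than cycling. Once this ordering is in hand, every other step is either a direct application of the standard representation or a bookkeeping argument at the level of head terms.
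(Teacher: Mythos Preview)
The paper does not prove this theorem; it is stated with the citation \cite[Corollary 4.15 and Theorem 4.18]{Quot} and imported wholesale from that reference, so there is no in-paper proof against which to compare your proposal.

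That said, your outline is broadly the standard one and tracks what is done in \cite{Quot}: the easy directions are exactly the linear-algebra/head-term arguments you give, and the substantive content is indeed the termination of the Buchberger-style rewriting underlying $(\mathrm{iv})\Rightarrow(\mathrm{ii})$ together with the degree-by-degree promotion past $\mathrm{reg}(J)+1$. You are right to flag the well-founded ordering as the crux; in \cite{Quot} (and already in \cite{CMR2015}) this is handled by the lexicographic comparison on the multiplier $x^\delta$, essentially the content of what appears in this paper as \cite[Lemma~3.4(vi)]{Quot} and is reused in Lemma~\ref{lemma:lex}. One point to tighten in your sketch of $(\mathrm{iv})\Rightarrow(\mathrm{i})$: when you write ``a consequence of $(\mathrm{iv})$ otherwise via the same peeling induction'' for the sub-claim $x_i\cdot\mathcal H^{(t-1)}\subseteq\langle\mathcal H^{(t)}\rangle_A$ in degrees $t>\mathrm{reg}(J)+1$, you are implicitly multiplying the relation $x_i h_\alpha=\sum c_j x^{\delta_j} h_{\alpha_j}$ by $x^\delta$ and asserting that each $x^\delta x^{\delta_j} h_{\alpha_j}$ again lies in $\langle\mathcal H^{(t)}\rangle_A$; this requires checking that $\max(x^\delta)\leq\min(x^{\alpha_j})$, which is not automatic and is exactly where the peeling/ordering argument has to be invoked once more. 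Apart from that, your plan is sound.
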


A {\em fundamental syzygy} $S_{\alpha,i}$ of a $\mathcal P_J$-marked basis $\mathcal H$ is a syzygy obtained by rewriting a polynomial $x_ih_{\alpha}$ using the procedure of Definition \ref{def:riscrittura}, where $h_\alpha$ belongs to $\mathcal H$ and $x_i$ is a non-multiplicative variable of the head term $x^\alpha$ of $h_\alpha$. The components of $S_{\alpha,i}$ are the coefficients $P_h$ from the standard representation $x_ih_{\alpha}=\sum P_h h$ guaranteed by Theorem \ref{th:equivalent conditions}{\em (iv)} and Formula \eqref{eq:standard representation}.

It is noteworthy that the set of the fundamental syzygies generate the module of syzygies of $\mathcal H$  \cite[Theorem 6.5]{Quot}. Hence, a polynomial $h_\beta\in \mathcal H$  depends on $\mathcal H\setminus \{h_\beta\}$ if and only if there exists a fundamental syzygy of $\mathcal H$ with a constant non-null element corresponding to the polynomial $h_\beta$.

The following result is a generalization of \cite[Corollary~2.3]{CR} to quasi-stable ideals.

\begin{proposition}\label{prop:known facts}
Let $I$ be the ideal generated by a $\mathcal P_J$-marked set $\mathcal H\subseteq R$.
\begin{itemize}
\item[(i)] The codimension of $I$ is higher than or equal to the codimension of $J$.
\item[(ii)] If $\mathcal H$ is a $\mathcal P_J$-marked basis, then the codimension of $I$ is equal to the codimension of $J$.
\end{itemize}
\end{proposition}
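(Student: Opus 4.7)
The plan is to deduce both inequalities from a comparison of the Hilbert functions of $R/I$ and $R/J$.

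For (i), I would appeal to the standard representation \eqref{eq:standard representation}, which the excerpt emphasizes is produced by the rewriting procedure for any $\mathcal{P}_J$-marked set $\mathcal H$, not only for marked bases. It expresses every homogeneous $f\in R_t$ as $f=\sum_{h\in\mathcal H} P_h h + g$ with $g\in \langle \mathcal N(J)_t\rangle_\kk$, so the classes of $\mathcal N(J)_t$ generate $(R/I)_t$ as a $\kk$-vector space. Hence
\[
H_{R/I}(t)=\dim_\kk (R/I)_t \leq \#\mathcal N(J)_t = H_{R/J}(t)
\]
for every $t$. For $t\gg 0$ the two sides agree with the respective Hilbert polynomials, so the Hilbert polynomial of $R/I$ has degree at most that of $R/J$. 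Since the Krull dimension of a graded quotient of $R$ equals the degree of its Hilbert polynomial plus one, this gives $\dim(R/I)\leq \dim(R/J)$, i.e.\ $\mathrm{codim}(I)\geq \mathrm{codim}(J)$.

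For (ii), if $\mathcal H$ is a $\mathcal{P}_J$-marked basis then Definition~\ref{def:sets and bases} provides the direct-sum decomposition $R_t=I_t\oplus \langle\mathcal N(J)_t\rangle_\kk$ for every $t$ (equivalently, condition (iii) of Theorem~\ref{th:equivalent conditions} holds in every degree). This upgrades the previous inequality to the equality $H_{R/I}(t)=H_{R/J}(t)$ for every $t$, so the Hilbert polynomials of $R/I$ and $R/J$ coincide, $\dim(R/I)=\dim(R/J)$, and therefore $\mathrm{codim}(I)=\mathrm{codim}(J)$.

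There is no substantial obstacle: the statement is essentially a formal consequence of the standard-representation machinery already recalled in Section~\ref{sec:preliminaries}. The only point that deserves emphasis is the distinction used above: \eqref{eq:standard representation} applies to arbitrary marked sets, which is precisely what supplies the one-sided spanning needed for (i), while the stronger direct-sum decomposition built into Definition~\ref{def:sets and bases} is the specific extra property of a marked basis that promotes this inequality to the equality asserted in (ii).
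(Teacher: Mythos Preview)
Your proof is correct and follows essentially the same approach as the paper: both use the standard representation \eqref{eq:standard representation} to obtain $R=I+\langle\mathcal N(J)\rangle_\kk$ (yielding $H_{R/I}\le H_{R/J}$, equivalently $\dim_\kk I_s\ge \dim_\kk J_s$) and then compare degrees of Hilbert polynomials for (i), and both invoke the direct-sum decomposition in Definition~\ref{def:sets and bases} to obtain equality for~(ii). Your write-up is a bit more explicit about the passage from Hilbert polynomial degree to Krull dimension, but the argument is the same.
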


\begin{proof}
For item (i), by the standard representation \eqref{eq:standard representation} we have $R=I+ \langle\mathcal N(J)\rangle_K$, hence $\dim_K I_s\geq \dim_K J_s$, for every $s\geq 0$ and the degree of the Hilbert polynomial of $R/I$ is lower than or equal to the degree of the Hilbert polynomial of $R/J$.

For item (ii), it is enough to observe that by the definition of marked basis we have $R=I\oplus \langle \mathcal N(J)\rangle_K$.
\end{proof}

\section{Marked functor and Hilbert scheme} 
\label{sec:marked functors}

The set of ideals $I$ having a $\mathcal P_J$-marked basis is called {\em the $\mathcal P_J$-marked family} and can be parametrised by an affine scheme which represents a functor from the category of Noetherian $\kk$-Algebras to that of Sets.
We briefly recall the definition of this functor and the construction of the representing affine scheme.
 
The {\em marked functor} from the category of Noetherian $\kk$-algebras to the category of sets
\[
\MFFunctor{J}: \underline{\text{Noeth}\ \kk\!\!-\!\!\text{Alg}} \longrightarrow \underline{\text{Sets}}
\]
associates to any Noetherian $\kk$-algebra $A$ the set
$$\MFFunctor{J}(A):=\{ (\mathcal H) \subset R_A\mid \mathcal H\text{ is a } J\text{-marked basis}\}$$
and to any morphism of $\kk$-algebras $\sigma: A \rightarrow {A'}$ the map
\[
\renewcommand{\arraystretch}{1.3}
\begin{array}{rccc}
\MFFunctor{J}(\sigma):& \MFFunctor{J}(A) &\longrightarrow& \MFFunctor{J}({A'})\\
&(\mathcal H) & \longmapsto& (\sigma(\mathcal H))\;.
\end{array}
\]
Note that the image $\sigma(\mathcal H)$ under this map is indeed again a
$\mathcal P_J$-marked basis, as we are applying the functor $-\otimes_A {A'}$
to the decomposition
$\left(R_A\right)_s =  (\mathcal H)_s \oplus \langle
  \mathcal {N}(J)_s \rangle_A$  for every degree $s$.
  
\begin{remark}\label{rem:lemmaMP}
Generalising \cite[Proposition 2.1]{LR2} to quasi-stable ideals, we obtain
\[
\{ (\mathcal H) \subset R_A\mid \mathcal H\text{ is a } \mathcal P_J\text{-marked basis}\}=\{ I \subset R_A \text{ ideal } \mid  R_A=I\oplus \mathcal \langle \mathcal {N}(J)\rangle_A\}.
\]
\end{remark}

The functor $\MFFunctor{J}$ is represented by the affine scheme $\MFScheme{J}$ that can be explicitly constructed by the following procedure. We consider the $\kk$-algebra $\kk[C]$, where $C$ denotes the finite set of variables
$\bigl\{C_{\alpha\eta}\mid x^\alpha \in \PP{J}, x^\eta
\in \NN(J), \deg(x^\eta)=\deg(x^\alpha)\bigr\}$, and
construct the $\mathcal P_J$-marked set $\mathscr H\subset R_{\kk[C]}$ consisting of the following marked polynomials
\begin{equation}\label{polymarkKC}
h_\alpha=x^\alpha-\sum_{x^{\eta}\in \NN(J)_{\vert\alpha\vert}}C_{\alpha\eta }x^\eta 
\end{equation}
with $x^{\alpha}\in\PP{J}$.
According to \eqref{eq:multipli}, we consider $\mathscr H^{(t)}$, for every integer $t$. 

Then, by the Noetherian and confluent reduction procedure given in Definition~\ref{def:riscrittura}, for every term $x^\alpha \in \PP{J}$ and every non-multiplicative variable $x_i$ of $x^\alpha$, like in~\eqref{eq:standard representation} we compute a polynomial $g_{\alpha,i}\in \langle \NN(J)_{\vert\alpha\vert+1}\rangle_A$ such that $x_i h_\alpha-g_{\alpha,i}\in \langle \mathscr H^{(t)} \rangle_A$, for some integer $t$. 

We denote by $\mathscr U$ the ideal generated in $\kk[C]$ by the $x$-coefficients of the polynomials~$g_{\alpha,i}$. Hence, we have $\MFScheme{J}=\mathrm{Spec} (\kk[C]/\mathscr U)$ (see \cite[Remark 6.3]{CMR2015} and \cite[Theorem~5.1]{Quot}), thanks to Theorem \ref{th:equivalent conditions}.

If $J$ is in particular a {\em saturated} quasi-stable ideal, then $x_0$ does not divide any term of $B_{J}$ and $R/J$ has positive Krull dimension. For every integer $t$, $J_{\geq t}$ is quasi-stable too, so that we can consider $\MFScheme{J_{\geq t}}$. 

Let $p(z)$ be the Hilbert polynomial of $R/J$ and $\HScheme{\mathbb P^n}{p(z)}$ be the Hilbert scheme that describes flat families of closed subschemes of $\mathbb P^n$ having Hilbert polynomial $p(z)$. Then, $\MFScheme{J_{\geq t}}$ embeds in $\HScheme{\mathbb P^n}{p(z)}$, for every integer $t$, like a locally closed subscheme (see \cite[Proposition 6.13]{BCRAffine}). This result can be refined in the following way.

If $\PP{J}$ does not contain any term divisible by $x_1$, we set $\rho_{J}:=1$. Otherwise, we set $\rho_{J}:=\max\{\deg(x^\alpha) \ \vert \ x^\alpha \in \PP{J} \text{ is divisible by } x_1\}=\mathrm{sat}\bigl(\frac{(J,x_0)}{(x_0)} \bigr)$.

\begin{proposition}\label{prop:sottoschema}
\cite[Corollary 6.11, Proposition 6.13(ii)]{BCRAffine}
With the above notation, 
\begin{enumerate}
\item for every $t\geq \rho_{J}-1$, $\MFScheme{J_{\geq t}} \cong \MFScheme{J_{\geq t+1}}$;
\item for every $t\geq \rho_{J}-1$, $\MFScheme{J_{\geq t}}$ is an open subscheme of $\HScheme{\mathbb P^n_K}{p(z)}$.
\end{enumerate}
\end{proposition}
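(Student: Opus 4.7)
The plan is to prove (1) by constructing mutually inverse natural transformations between the functors $\MFFunctor{J_{\geq t}}$ and $\MFFunctor{J_{\geq t+1}}$ for $t \geq \rho_{J} - 1$, and then to deduce (2) by combining (1) with the locally closed embedding into the Hilbert scheme that was recalled just before the statement.

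For the forward direction, given an ideal $I \subset R_A$ defined by a $\PP{J_{\geq t}}$-marked basis $\mathcal{H}$, I would associate the truncation $I_{\geq t+1}$. By Remark \ref{rem:lemmaMP}, we have $R_A = I \oplus \langle \mathcal{N}(J_{\geq t})\rangle_A$; since $\mathcal{N}(J_{\geq t})$ and $\mathcal{N}(J_{\geq t+1})$ agree in every degree $\geq t+1$, this decomposition restricts in degrees $\geq t+1$ to $R_A = I_{\geq t+1} \oplus \langle \mathcal{N}(J_{\geq t+1})\rangle_A$, so $I_{\geq t+1}$ is parametrized by $\MFScheme{J_{\geq t+1}}(A)$, and the assignment is functorial in $A$.

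The reverse direction is the delicate part. Given a $\PP{J_{\geq t+1}}$-marked basis $\mathcal{H}'$ generating $I' \subset R_A$, I must exhibit a unique ideal $I \supseteq I'$ that admits a $\PP{J_{\geq t}}$-marked basis and satisfies $I_{\geq t+1} = I'$. The task reduces to producing the degree-$t$ component $I_t$: head terms and noise terms in degree $t$ are combinatorially fixed by $\PP{J_{\geq t}}_t$ and $\mathcal{N}(J_{\geq t})_t$, so the problem amounts to the Hilbert-function matching $\dim_A (R_A/I')_t = H_{R/J}(t)$. The hypothesis $t \geq \rho_{J} - 1$ is precisely what forces this equality: via the identification $\rho_{J} = \mathrm{sat}((J,x_0)/(x_0))$ and the short exact sequence $0 \to R/(J:x_0)(-1) \to R/J \to R/(J,x_0) \to 0$, the Hilbert function of $R_A/I'$ is pinned down to that of $R/J$ starting from degree $t$. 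Once $I_t$ is recovered, I would verify condition \emph{(iv)} of Theorem \ref{th:equivalent conditions} to conclude that the assembled set is a $\PP{J_{\geq t}}$-marked basis. I expect this Hilbert-function comparison, together with the verification of the marked-basis axioms for the assembled set, to be the main technical obstacle.

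For (2), the locally closed embedding $\MFScheme{J_{\geq t}} \hookrightarrow \HScheme{\mathbb{P}^n}{p(z)}$ is already available, so by (1) it suffices to prove openness at a single $t_0 \geq \rho_{J} - 1$. Choosing $t_0$ at least as large as the Gotzmann number of $p(z)$, every ideal $\mathcal{I}$ parametrized by the Hilbert scheme has $\mathcal{I}_{t_0}$ of the expected $A$-rank; the condition that the direct sum decomposition $(R_A)_{t_0} = \mathcal{I}_{t_0} \oplus \langle \mathcal{N}(J_{\geq t_0})_{t_0}\rangle_A$ holds is cut out by the non-vanishing of a determinant, hence is open. Transporting this openness back through the chain of isomorphisms furnished by (1) yields the conclusion for every $t \geq \rho_{J} - 1$.
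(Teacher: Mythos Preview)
The paper does not supply a proof of this proposition; it is quoted verbatim from \cite[Corollary 6.11, Proposition 6.13(ii)]{BCRAffine}. So there is no argument in the paper to compare against, and your sketch must stand on its own.

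Your forward map in (1) and your argument for (2) are both sound. The genuine gap is the reverse direction of (1). You write that ``the problem amounts to the Hilbert-function matching $\dim_A (R_A/I')_t = H_{R/J}(t)$'', but this equality is false as written: $I'$ is generated in degrees $\geq t+1$, so $I'_t=0$ and $(R_A/I')_t=(R_A)_t$, which has rank $\binom{n+t}{n}\neq H_{R/J}(t)$ whenever $J_t\neq 0$. More to the point, even a correct Hilbert-function statement would not \emph{construct} the degree-$t$ marked polynomials; you still have to produce, for each $x^\alpha\in\PP{J_{\geq t}}$ of degree $t$, a specific $h_\alpha=x^\alpha-\sum c_{\alpha\eta}x^\eta$ with $x^\eta\in\mathcal N(J)_t$ and then verify Theorem~\ref{th:equivalent conditions}(iv). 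Your short exact sequence involves only the monomial ideal $J$ and says nothing about which coefficients $c_{\alpha\eta}$ to choose.

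The mechanism actually used in \cite{BCRAffine} is different and more concrete: one observes that for $x^\alpha\in\PP{J_{\geq t}}$ of degree $t$ the term $x_0x^\alpha$ lies in $\PP{J_{\geq t+1}}$, and the hypothesis $t+1\geq\rho_J=\mathrm{sat}\bigl((J,x_0)/(x_0)\bigr)$ forces every tail term of the corresponding marked polynomial $h'_{x_0\alpha}\in\mathcal H'$ to be divisible by $x_0$ (this is where $\rho_J$ enters, via the structure of $\mathcal N(J)_{t+1}$ modulo $x_0$). One then sets $h_\alpha:=h'_{x_0\alpha}/x_0$ and checks that the resulting set is a $\PP{J_{\geq t}}$-marked basis whose truncation recovers $I'$. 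This division-by-$x_0$ step is the missing idea in your sketch; without it the inverse map is not defined, and the isomorphism in (1) is not established.
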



 \section{Cohen-Macaulay conditions by marked bases}
\label{sec:CM}

Let $I\subset R=\mathbb K[x_0,\dots,x_n]$ be a homogeneous ideal such that the Krull-dimension $\dim(R/I)$ of $R/I$ is $d$, with $\kk$ any field. If we denote by $M$ the graded $R$-module $R/I$, the codimension (or height) of $I$ is $\mathrm{codim}(I)=\dim R -\dim M$.

For any (graded) $R$-module $M$ we only take $M$-regular sequences that are made of homogeneous polynomials.
All the maximal $M$-regular sequences have the same length, which is called the {\em depth of $M$} and denoted by $\mathrm{depth}(M)$. In general, the inequality $\mathrm{depth}(M)\leq \dim(M)$ holds.

\begin{definition}
A graded $R$-module $M$ is called a {\em Cohen-Macaulay (CM for short) module} if and only if $\mathrm{depth}(M)=\dim(M)$. If $M=R/I$, then the ideal $I$ is said CM if and only if $M$ is CM. 
Analogously, we will say that the closed projective scheme defined by $I$ is \emph{arithmetically Cohen-Macaulay} 
if and only if $I^{sat}$ is~CM (see \cite{Mi}). 

The {\em arithmetically Cohen-Macaulay locus} in a Hilbert scheme is the subset of points corresponding to arithmetically Cohen-Macaulay schemes.
\end{definition}

From now we assume $M:=R/I$. Note that if $R/I$ is Artinian then it is CM.

If $\ell$ is a linear non-zero divisor on $M$, then $M$ is CM if and only if $M/(\ell)M$ is~CM. If $\ell_0,\dots,\ell_{d-1}$ is a maximal $M$-regular sequence made of linear forms, then the module $M/(\ell_0,\dots,\ell_{d-1})M$ is called an {\em Artinian reduction of $M$} and, analogously, $I/(\ell_0,\dots,\ell_{d-1})I \simeq (I+(\ell_0,\dots,\ell_{d-1}))/(\ell_0,\dots,\ell_{d-1})$ is an Artinian reduction of $I$. Up to a linear change of variables, we can assume that $x_0,\dots,x_{d-1}$ is a maximal $M$-regular sequence.

Our first aim is to explore effective methods to check if a homogeneous ideal $I$ is CM exploiting the features of marked bases {\em only}. Hence, from now we assume that {\em $I\subset R$ is a homogeneous ideal generated by a $\mathcal P_J$-marked basis $\mathcal H=\{h_1,\dots,h_t\}$ and $d$ is the Krull dimension of $M=R/I$}. 

Recall that the variables of the polynomial ring $R=\mathbb K[x_0,\dots,x_n]$ are ordered as $x_0<x_1<\dots<x_n$ and $J\subset R$ is a quasi-stable ideal.

By the properties of quasi-stable ideals, the sequence $x_0,\dots,x_{d-1}$ is a {\em generic sequence on $R/J$} in the sense that $x_i$ is not a zero-divisor on $R/(J,x_0,\dots,x_{i-1})^{sat}$, for every $i\in \{0,\dots,d-1\}$. 
Then, the sequence $x_0,\dots,x_{d-1}$ is a $R/J$-regular sequence if and only if $R/J$ is CM (see \cite[Proposition 2.20]{Seiler2009II}). Hence, $R/J$ is CM if and only if $J$ is generated by terms in $\mathbb K[x_{d},\dots,x_n]$.

Generally, it can happen that $J$ is not CM even if $I$ is CM, as the following example shows (differently from what happens when $J$ is the initial ideal of $I$ with respect to the degrevlex order).

\begin{example}\label{ex:regsale} 
Let $I$ be the ideal $(x_2^2,x_1x_2+x_0^2)\subset \mathbb  K[x_0,x_1,x_2]$, with $\mathrm{char}(\mathbb K)=0$ and $x_0<x_1<x_2$. The ideal $I$ is CM and, for every term order, its initial ideal is $(x_2^2, x_1x_2,x_0^2x_2,x_0^4)$. If $\prec$ is the degrevlex term order, then $\mathrm{gin}(I)=(x_2^2, x_1x_2,x_1^3)$ is a CM ideal. If $\prec$ is the deglex term order, then $\mathrm{gin}(I)=(x_2^2,x_1 x_2,x_0^2x_2,x_1^4)$ is a quasi-stable and non-CM ideal, on which the image of $I$ by a generic change of variables has a marked basis. 
\end{example}

On the other hand, if $J$ is CM then $I$ is CM, as the following result shows. This has already been stated in \cite[Corollary~3.9]{BCRAffine} with a hint for its proof. Here we give a proof in terms of the properties of quasi-stable ideals only. 

\begin{theorem} \label{th:CM}
Let $I$ be an ideal generated by a $\mathcal P_J$-marked basis. If $J$ is CM then $I$ is CM.
\end{theorem}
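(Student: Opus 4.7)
The plan is to induct on the Krull dimension $d=\dim(R/I)=\dim(R/J)$, the equality coming from Proposition \ref{prop:known facts}(ii). The base case $d=0$ is trivial, since an Artinian module is automatically CM. For $d \geq 1$ I would use the standard criterion recalled at the beginning of this section: $R/I$ is CM whenever some linear form $\ell$ is regular on $R/I$ and $R/(I,\ell)$ is CM. I take $\ell=x_0$ and proceed in two substeps: first show that $x_0$ is a non-zero divisor on $R/I$, then exhibit $R/(I,x_0)$ as a quotient fitting the hypotheses of the theorem in one fewer variable with Krull dimension $d-1$, so that the inductive hypothesis applies.

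The regularity of $x_0$ is a direct consequence of the characterization of CM quasi-stable ideals recalled just before the statement: $J$ CM forces $\mathcal{P}_J \subset \mathbb{K}[x_d,\dots,x_n]$, hence no generator of $J$ involves $x_0$. Consequently, multiplication by $x_0$ maps the set of terms $\mathcal{N}(J)$ into itself: if $x^\eta \notin J$, then $x_0 x^\eta$ is still not divisible by any generator of $J$, because every such generator lies in $\mathbb{K}[x_d,\dots,x_n]$. Via the $\mathbb{K}$-vector-space decomposition $R = I \oplus \langle \mathcal{N}(J)\rangle_{\mathbb{K}}$ supplied by Definition \ref{def:sets and bases}, the multiplication-by-$x_0$ map on $R/I$ is identified with the $\mathbb{K}$-linear map on $\langle \mathcal{N}(J)\rangle_{\mathbb{K}}$ sending $p$ to $x_0 p$; since this map sends the distinguished basis $\mathcal{N}(J)$ injectively into itself, it is injective.

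For the inductive step I set $R' := \mathbb{K}[x_1,\dots,x_n] = R/(x_0)$ and denote by $\bar J$, $\bar I$, $\bar{\mathcal{H}}$ the images of $J$, $I$, $\mathcal{H}$. Since $\mathcal{P}_J \subset \mathbb{K}[x_d,\dots,x_n]$, the ideal $\bar J$ coincides with $J$ viewed inside $R'$ and is still quasi-stable and CM, now of Krull dimension $d-1$. The most delicate point of the proof is the verification that $\bar{\mathcal{H}}$ is a $\mathcal{P}_{\bar J}$-marked basis of $\bar I$. The marked-set condition is immediate, since every head term $x^\alpha$ is unchanged by the projection and tail terms in $\langle \mathcal{N}(J)\rangle_{\mathbb{K}}$ project either to $0$ or to elements of $\langle \mathcal{N}(\bar J)\rangle_{\mathbb{K}}$. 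For the marked-basis condition I would apply Theorem \ref{th:equivalent conditions}(iv): any standard representation $x_i h_\alpha = \sum_{h\in\mathcal{H}} P_h\,h$ in $R$ (for $x_i$ non-multiplicative of $x^\alpha$) projects to $x_i \bar h_\alpha = \sum_{h\in\mathcal{H}} \bar P_h\,\bar h$ in $R'$, and each $\bar P_h$ remains a combination of powers of multiplicative variables of $\mathrm{Ht}(h)$ inside $R'$, which is precisely what is needed to witness the confluent reduction $x_i \bar h_\alpha \crid{\bar{\mathcal{H}}^{(t)}} 0$. The inductive hypothesis then yields that $\bar I$ is CM in $R'$, whence $R/(I,x_0) = R'/\bar I$ is CM and the criterion from the first paragraph concludes.
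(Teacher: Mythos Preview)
Your proof is correct, but it follows a genuinely different route from the paper's. The paper argues homologically: it invokes that the Pommaret-induced free resolution of $R/J$ has length exactly $\mathrm{pd}(R/J)=n+1-d$, that $\mathrm{pd}(R/I)\le\mathrm{pd}(R/J)$ by \cite[Corollary 6.8]{Quot}, and then uses Auslander--Buchsbaum to rule out strict inequality. Your argument instead proceeds by induction on $d$, showing directly that $x_0$ is $R/I$-regular via the decomposition $R=I\oplus\langle\mathcal N(J)\rangle_{\kk}$ together with $x_0\mathcal N(J)\subset\mathcal N(J)$, and then checking that $\bar{\mathcal H}$ is again a marked basis in $R/(x_0)$ by projecting standard representations.

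What each approach buys: the paper's proof is short once the cited resolution facts are in hand, and it explains conceptually why the CM property transfers (same projective dimension). Your approach is more elementary and self-contained---it avoids the free-resolution machinery entirely---and in fact it already contains the content of Proposition~\ref{prop:Artinian reduction} (which the paper proves separately, and for item~(i) by appealing to the external reference \cite[Theorem~3.5]{BCRAffine}). So your argument effectively merges Theorem~\ref{th:CM} and Proposition~\ref{prop:Artinian reduction} into a single induction, trading the homological shortcut for a direct hands-on verification.
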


\begin{proof}
Since $J$ is CM, $\dim (R/J)=\mathrm{depth} (R/J)$. Since the ideal $I$ is
generated by a $\mathcal P_J$-marked basis, $d=\dim (R/I)=\dim (R/J)$.  The free resolution of the quasi-stable ideal $J$ induced by its Pommaret basis is generally non minimal, but its length is the projective dimension of $R/J$ (i.e.\ minimization does not affect the length of the resolution) implying $\mathrm{pd}(R/J)=n+1-d$  \cite[Theorem 8.11]{Seiler2009II}.  By \cite[Corollary 6.8]{Quot}, $\mathrm{pd}(R/I)\leq \mathrm{pd}(R/J)=n+1-d$. A strict inequality would imply, by the Auslander-Buchsbaum formula, $\mathrm{depth}(M)>d$, which is not possible since $\mathrm{depth}(M)\leq \dim(R/I)=d$. Hence, $\mathrm{pd}(R/I)=n+1-d$ and we conclude that $I$ is CM, too.
\end{proof}

Recall that zero-dimensional projective schemes are always arithmetically Cohen-Macaulay. Thus, Hilbert schemes corresponding to constant Hilbert polynomials are made of arithmetically Cohen-Macaulay schemes. 

For Hilbert polynomials of positive degree we can now recover the result that the arithmetically Cohen-Macaulay locus in the corresponding Hilbert scheme is an open subset, in terms of marked schemes.

\begin{corollary}\cite[Remark 3.10]{BCRAffine}\label{cor:CM} 
The  arithmetically Cohen-Macaulay locus in a Hilbert scheme with a non-constant Hilbert polynomial coincides with the union of the open subschemes $\MFScheme{J}$, with $J$ CM quasi-stable ideal, and of their images by linear changes of variables.
\end{corollary}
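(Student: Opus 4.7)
My plan is to prove the two inclusions separately. For the easier direction, let $J$ be a Cohen-Macaulay quasi-stable ideal and $I$ an ideal generated by a $\mathcal P_J$-marked basis. Theorem~\ref{th:CM} gives that $R/I$ is Cohen-Macaulay. Since the Hilbert polynomial is non-constant, $\dim(R/I) \geq 1$, hence $\mathrm{depth}(R/I) \geq 1$; this forces the irrelevant ideal not to be an associated prime of $R/I$, so $I$ equals its saturation. Consequently the corresponding Hilbert scheme point lies in the arithmetically Cohen-Macaulay locus. Since the Cohen-Macaulay property is invariant under linear changes of variables, the same conclusion extends to images of $\MFScheme{J}$.

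For the reverse inclusion, I would start with a point $[X]$ in the arithmetically Cohen-Macaulay locus, with saturated defining ideal $I_X$, so that $R/I_X$ is Cohen-Macaulay. After a sufficiently generic linear change of coordinates $\phi$, the generic initial ideal $J := \mathrm{gin}_{\prec}(\phi(I_X))$ for the degree-reverse-lexicographic order $\prec$ is Borel-fixed, and hence quasi-stable. The classical theorem of Bayer and Stillman gives $\mathrm{pd}(R/\phi(I_X)) = \mathrm{pd}(R/J)$ for this order; combined with equality of Krull dimensions and the Auslander-Buchsbaum formula, this yields $\mathrm{depth}(R/J) = \dim(R/J)$, so $J$ is Cohen-Macaulay. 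For $t \geq \rho_J - 1$ the truncation $\phi(I_X)_{\geq t}$ admits a $\mathcal P_{J_{\geq t}}$-marked basis, placing $[\phi(X)]$ in $\MFScheme{J_{\geq t}}$, which by Proposition~\ref{prop:sottoschema}(2) is an open subscheme of $\HScheme{\mathbb P^n}{p(z)}$. Applying $\phi^{-1}$ realises $[X]$ as a point of a marked scheme of the desired form.

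The main obstacle I foresee is the interplay between ``Cohen-Macaulay quasi-stable $J$'' in the statement and the truncated ideal $J_{\geq t}$ appearing in Proposition~\ref{prop:sottoschema}: strictly speaking $J_{\geq t}$ need not itself be Cohen-Macaulay as an $R$-module even when $J$ is, because of possible low-degree socle contributions. For a saturated quasi-stable $J$ one can use the equality $(J_{\geq \rho_J-1}) = J$, so $\MFScheme{J}$ in the statement is identified with the open subscheme $\MFScheme{J_{\geq \rho_J - 1}}$ of the Hilbert scheme. Once this identification is made explicit, the remainder of the argument reduces to a direct combination of Theorem~\ref{th:CM}, the Bayer-Stillman depth-preservation property under degrevlex gin, and the standard construction of marked bases from generic initial positions.
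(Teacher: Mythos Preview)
Your argument is essentially the paper's, and the core logic is sound, but the ``main obstacle'' you describe is a difficulty of your own making. The observation you are missing is that when $J$ is a Cohen--Macaulay quasi-stable ideal with non-constant Hilbert polynomial, the Krull dimension $d=\dim(R/J)$ is at least $2$, so $J$ is generated by terms in $\kk[x_d,\dots,x_n]$; in particular no term of $\mathcal P_J$ is divisible by $x_1$, and by definition $\rho_J=1$. Hence $\MFScheme{J}=\MFScheme{J_{\geq 0}}$ is \emph{directly} an open subscheme of the Hilbert scheme by Proposition~\ref{prop:sottoschema}(2), with no truncation involved. This is exactly what the paper records in one line, and it dissolves your worry that $J_{\geq t}$ might fail to be Cohen--Macaulay: that question never arises.

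For the reverse inclusion, your route via $\mathrm{gin}$ and Bayer--Stillman is a legitimate variant, but note two points. First, Borel-fixed implies quasi-stable only in characteristic zero, so your argument as written carries an implicit characteristic hypothesis; the paper instead invokes the deterministic coordinate change of \cite{HSS}, which produces a quasi-stable degrevlex initial ideal over any field, and then depth preservation (the Pommaret-basis analogue of the Bayer--Stillman fact you cite) gives that this initial ideal is CM. Second, once $\mathrm{in}_\prec(\phi(I_X))=J$ is quasi-stable, the reduced Gr\"obner basis of $\phi(I_X)$ already \emph{is} a $\mathcal P_J$-marked basis, so $\phi(I_X)\in\MFFunctor{J}(\kk)$ with no truncation; your detour through $\phi(I_X)_{\geq t}$ and $\MFScheme{J_{\geq t}}$ is unnecessary here as well.
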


\begin{proof}
Since the ideal $J$ is CM, then $\rho_J=1$, and hence $\MFScheme{J}\simeq \MFScheme{J_{\geq t}}$ for every integer $t\geq 0$. So, thanks to Proposition \ref{prop:sottoschema}, $\MFScheme{J}$ is an open subscheme of the corresponding Hilbert scheme, for every $J$ CM, and it is made of Cohen-Macaulay schemes, by Theorem \ref{th:CM}. On the other hand, if $K$ is a CM ideal defining a Cohen-Macaulay scheme in a certain Hilbert scheme, we can find a deterministic change of variables $g$ 
 (see \cite{HSS}) such that $g(K)$ has a quasi-stable CM ideal $J$ as initial ideal with respect to the degree reverse lexicographic order. So, up to a suitable change of variables the ideal $K$ belongs to $\MFFunctor{J}(\kk)$.
\end{proof}

The use of changes of coordinates in Corollary \ref{cor:CM} is unavoidable, because $\MFFunctor{J}(\kk)$ can contain CM ideals even if $J$ is not CM, as we have already highlighted in Example~\ref{ex:regsale}. 
Thus, the following question arises: if we consider $\MFScheme{J}$ with $J$ non-CM, how can we detect $I\in \MFFunctor{J}(\kk)$ such that $I^{\mathrm{sat}}$ is CM, using the features of marked bases only? 

In Section \ref{sec:truncation} we will give an answer to this question in the particular situation that $J=(J^{sat})_{\geq m}$ and consequently $I=(I^{sat})_{\geq m}$, for a suitable integer $m$ (see \cite[Corollary~3.7]{BCRAffine}). This is the situation that allows us to embed marked schemes in Hilbert schemes, in the further hypothesis that the Krull-dimension of $R/J$ is $d\geq 1$, as recalled in Section \ref{sec:marked functors}.

Finally, the following refinement of the result of Theorem~\ref{th:CM} gives us a suitable construction of Artinian reductions. 

\begin{proposition}\label{prop:Artinian reduction}
Let $I$ be an ideal generated by a $\mathcal P_J$-marked basis. If $x_0,\dots,x_{d-1}$ is a $R/J$-regular sequence, then 
\begin{itemize}
\item[(i)] $x_0,\dots,x_{d-1}$ is a $R/I$-regular sequence too, and 
\item[(ii)] the polynomials obtained from the $\mathcal P_J$-marked basis of~$I$ setting $x_0=\dots=x_{d-1}=0$ form a marked basis of the Artinian reduction of $I$ over the quotient $(J+(x_0,\dots,x_{d-1}))/(x_0,\dots,x_{d-1})$. 
\end{itemize}
\end{proposition}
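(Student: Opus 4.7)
The plan is to establish item (ii) first and to derive item (i) from it by counting. From the hypothesis that $x_0,\dots,x_{d-1}$ is an $R/J$-regular sequence of length $d=\dim(R/J)$, I infer that $R/J$ is CM, so by the characterisation recalled just before Example~\ref{ex:regsale} the Pommaret basis $\mathcal P_J$ consists of terms in $\mathbb K[x_d,\dots,x_n]$. By Theorem~\ref{th:CM}, $R/I$ is then CM of Krull dimension $d$; this fact will be used only at the very end.

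To prove (ii), I set $\bar R:=R/(x_0,\dots,x_{d-1})\cong \mathbb K[x_d,\dots,x_n]$, write $\bar J:=(J+(x_0,\dots,x_{d-1}))/(x_0,\dots,x_{d-1})$, and denote by $\bar h_\alpha$ the image of $h_\alpha$ in $\bar R$. Because each $x^\alpha\in\mathcal P_J$ already lies in $\bar R$, the Pommaret basis $\mathcal P_{\bar J}$ coincides as a set of terms with $\mathcal P_J$, each $\bar h_\alpha$ retains $x^\alpha$ as head term, and its remaining terms stay in $\mathcal N(J)\cap\bar R=\mathcal N(\bar J)$. Hence $\bar{\mathcal H}:=\{\bar h_\alpha\}$ is a $\mathcal P_{\bar J}$-marked set. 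To upgrade it to a marked basis I will apply Theorem~\ref{th:equivalent conditions}(iv): for every $h_\alpha$ and every non-multiplicative variable $x_i$ of $x^\alpha$ (necessarily $x_i>x_d$, since $\min(x^\alpha)\geq x_d$), I start from the standard representation $x_i h_\alpha=\sum_\beta P_{h_\beta}h_\beta$ granted in $R$ and push it through the quotient to obtain $x_i\bar h_\alpha=\sum_\beta \bar P_{h_\beta}\bar h_\beta$. Each $\bar P_{h_\beta}$ then only involves the variables $x_j$ with $x_d\leq x_j\leq \min(x^\beta)$, which are precisely the multiplicative variables of $x^\beta$ in $\bar R$; so this is a genuine standard representation in $\bar R$ and yields $x_i\bar h_\alpha \crid{\bar{\mathcal H}^{(\vert\alpha\vert+1)}}0$, proving (ii).

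For (i), item (ii) together with Definition~\ref{def:sets and bases} gives the decomposition $\bar R=\bar I\oplus \langle\mathcal N(\bar J)\rangle_{\mathbb K}$, where $\bar I=(\bar{\mathcal H})$ is the image of $I$ in $\bar R$ and is therefore the Artinian reduction of $I$. Since $x_0,\dots,x_{d-1}$ is $R/J$-regular, $\bar R/\bar J$ has Krull dimension $0$, so $\bar J$ is Artinian, $\mathcal N(\bar J)$ is finite, and hence $\bar R/\bar I$ is finite-dimensional over $\mathbb K$. Thus $(x_0,\dots,x_{d-1})$ is a system of parameters of $R/I$, and since $R/I$ is CM of dimension $d$, any length-$d$ system of parameters is a regular sequence, which proves (i).

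The main obstacle will be to ensure that the quotient of a standard representation remains a standard representation, namely that the multiplicative-variable constraint on each coefficient $P_{h_\beta}$ survives. This is guaranteed because every head term in $\mathcal P_J$ avoids $x_0,\dots,x_{d-1}$, so the value of $\min(x^\beta)$ and the associated multiplicative/non-multiplicative partition restrict correctly to $\bar R$; without this observation the whole argument would break down.
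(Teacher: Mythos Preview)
Your proof is correct, but it follows a genuinely different strategy from the paper's own argument.

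The paper proceeds in the order (i) then (ii): for (i) it invokes Theorem~\ref{th:CM} to get that $I$ is CM and then applies \cite[Theorem~3.5]{BCRAffine} recursively to conclude that each $x_k$ is a non-zero-divisor on the successive quotients; for (ii) it argues via Hilbert functions, using that the hyperplane sections of a CM ideal are saturated so that the Hilbert function of the section equals the first difference, and then appeals to Theorem~\ref{th:equivalent conditions}(ii). Your route reverses the order and avoids the external citation: you first push the standard representations of the $x_ih_\alpha$ through the quotient map to verify criterion~(iv) of Theorem~\ref{th:equivalent conditions} for $\bar{\mathcal H}$, and only afterwards deduce (i) from the fact that $x_0,\dots,x_{d-1}$ is a homogeneous system of parameters in the CM ring $R/I$. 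The key observation that makes your descent work---namely that $\mathcal P_J\subset\mathbb K[x_d,\dots,x_n]$, so multiplicative and non-multiplicative variables of each head term are the same in $R$ and in $\bar R$---is exactly the content of your final paragraph, and it is sound (it follows from the fact that a quasi-stable CM $J$ is generated in $\mathbb K[x_d,\dots,x_n]$, hence its Pommaret basis, which one can identify with that of $J\cap\mathbb K[x_d,\dots,x_n]$, lies there too). Your approach is more self-contained relative to the present paper, since it needs only Theorems~\ref{th:CM} and~\ref{th:equivalent conditions} together with the confluence/uniqueness of standard representations, whereas the paper leans on \cite{BCRAffine}; on the other hand, the paper's Hilbert-function argument for (ii) is more conceptual and scales immediately without tracking individual rewriting steps.
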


\begin{proof}
If $x_0,\dots,x_{d-1}$ is a $R/J$-regular sequence, then $J$ is CM and $I$ is CM too by Theorem~\ref{th:CM}. Then, item (i) follows by applying recursively \cite[Theorem~3.5]{BCRAffine}.

Item (ii) follows from the fact that, being every hyperplane section of $I$ saturated because $I$ is Cohen-Macaulay, the differences of its Hilbert function coincide with the Hilbert function of the hyperplane sections. Then we conclude by Theorem~\ref{th:equivalent conditions}(ii).
\end{proof}

\section{Marked schemes over a truncated quasi-stable ideal}
\label{sec:truncation}

We here focus on the identification of Cohen-Macaulay ideals generated by a $\mathcal P_J$-marked schemes in the particular situation $J=(J^{sat})_{\geq m-1}$ and then $I=(I^{sat})_{\geq m-1}$, where $m\geq \rho_J$. 

First we recover a technical lemma which concerns hyperplane sections. Recall that if $I^{sat}$ has a $\mathcal P_{J^{sat}}$-marked basis then $(I^{sat})_{\geq t}$ has a $\mathcal P_{(J^{sat})_{\geq t}}$-marked basis, but the converse is not always true (see \cite[Example 3.8]{BCRAffine}). 

\begin{lemma}\label{lemma:lemma}\cite[Lemma 9.4]{BCF}
Let $J\subset R$ be a saturated quasi stable ideal such that $d=\dim(R/J)>0$, $J':=\bigl(\frac{(J,x_0)}{(x_0)}\bigr)^{sat}$ and $\rho$ be the satiety of $(J,x_0)/(x_0)\subset \mathbb K[x_1,\dots,x_n]$. For every $m\geq \rho$, if $I$ belongs to $\MFFunctor{J_{\geq m-1}}(\kk)$, then $\left(\frac{(I,x_0)}{(x_0)}\right)_{\geq m}$ belongs to $\MFFunctor{J'_{\geq m}}(\kk)$.
\end{lemma}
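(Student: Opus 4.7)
The plan is to verify the direct-sum criterion of Remark~\ref{rem:lemmaMP}, applied now to the ring $R':=R/(x_0)\cong\kk[x_1,\dots,x_n]$ and the quasi-stable ideal $J'_{\geq m}\subset R'$. Writing $\pi\colon R\to R'$ for the canonical projection and $\tilde I:=(I,x_0)/(x_0)$, what must be checked is
\[
R'_t = \tilde I_t \,\oplus\, \langle\mathcal N(J'_{\geq m})_t\rangle_\kk
\]
for every $t\geq m$, the case $t<m$ being immediate since $\mathcal N(J'_{\geq m})_t=R'_t$ and $(\tilde I_{\geq m})_t=0$. Before attacking this I would first identify the $\mathcal N$-set on the right: the hypothesis $m\geq\rho$ yields $(J'_{\geq m})_t=((J,x_0)/(x_0))_t$ for all $t\geq m$, and since $x_0$ is a non-zero-divisor on $R/J$ (a standard property of saturated quasi-stable ideals with $d\geq 1$), the equality $\mathcal N(J'_{\geq m})_t = \{x^\eta\in\mathcal N(J)_t : x_0\nmid x^\eta\}$ is immediate by a monomial inspection of $(J,x_0)$.

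For the sum, given $f'\in R'_t$ with $t\geq m$, choose a lift $f\in R_t$ and apply the marked basis decomposition $f=p+g$ with $p\in I_t$ and $g\in\langle\mathcal N(J_{\geq m-1})_t\rangle_\kk=\langle\mathcal N(J)_t\rangle_\kk$, which is legitimate since $t\geq m>m-1$. Splitting $g=g_0+g_1$ according to whether the monomials are divisible by $x_0$ gives $f'=\pi(p)+g_1\in\tilde I_t+\langle\mathcal N(J'_{\geq m})_t\rangle_\kk$.

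For the zero intersection, suppose $\pi(p)=g_1$ with $p\in I_t$ and $g_1\in\langle\mathcal N(J'_{\geq m})_t\rangle_\kk$. Then $p-g_1\in x_0R_{t-1}$, so $p=g_1+x_0r$ for some $r\in R_{t-1}$; decomposing $r=r'+r''$ with $r'\in I_{t-1}$ and $r''\in\langle\mathcal N(J)_{t-1}\rangle_\kk$ (valid because $t-1\geq m-1$) and rearranging yields
\[
g_1+x_0r'' = p-x_0r' \in I_t.
\]
The key observation is that the left-hand side also lies in $\langle\mathcal N(J)_t\rangle_\kk$: indeed $g_1$ uses monomials from $\mathcal N(J)_t$, and each monomial of $x_0r''$ is of the form $x_0x^\eta$ with $x^\eta\in\mathcal N(J)_{t-1}$, which remains in $\mathcal N(J)_t$ precisely because $x_0$ is a non-zero-divisor on $R/J$. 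The direct sum $R_t=I_t\oplus\langle\mathcal N(J)_t\rangle_\kk$ then forces both sides to vanish, and separating $x_0$-divisible monomials from those that are $x_0$-free forces $g_1=0$ as required.

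The main obstacle I expect is the bookkeeping between $R$, $R'$, $J$, the quotient $(J,x_0)/(x_0)$ and its saturation $J'$, together with the identification $\mathcal N(J'_{\geq m})_t=\{x^\eta\in\mathcal N(J)_t:x_0\nmid x^\eta\}$. This is exactly the point at which both hypotheses $m\geq\rho$ and $d\geq 1$ intervene in an essential way, and the argument collapses without either.
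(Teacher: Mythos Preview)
The paper does not actually prove this lemma: it is stated with a citation to \cite[Lemma~9.4]{BCF} and no argument is given, so there is nothing in the paper to compare your proof against. Your proof is correct and self-contained. You verify the direct-sum criterion of Remark~\ref{rem:lemmaMP} degree by degree, and both uses of the hypotheses are exactly right: $m\geq\rho$ is what lets you replace $\mathcal N(J'_{\geq m})_t$ by $\{x^\eta\in\mathcal N(J)_t:x_0\nmid x^\eta\}$ for $t\geq m$, and the fact that $x_0$ is a non-zero-divisor on $R/J$ (guaranteed by $J$ saturated) is what makes $x_0r''$ land in $\langle\mathcal N(J)_t\rangle_\kk$ in the intersection argument. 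One small remark: your parenthetical ``valid because $t-1\geq m-1$'' is slightly misleading, since the decomposition $R_{t-1}=I_{t-1}\oplus\langle\mathcal N(J_{\geq m-1})_{t-1}\rangle_\kk$ holds in every degree; what you really need $t-1\geq m-1$ for is the identification $\mathcal N(J_{\geq m-1})_{t-1}=\mathcal N(J)_{t-1}$, so that the monomials of $r''$ genuinely lie in $\mathcal N(J)$ rather than merely in $\mathcal N(J_{\geq m-1})$.
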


Recall that $x_0,\dots,x_{d-1}$ is a generic sequence for every {\em quasi-stable} ideal $H$ with $R/H$ of Krull-dimension $d$.

\begin{lemma}\label{lemma:generic sequence}
If $J=(J^{sat})_{\geq m-1}$ and $I=(I^{sat})_{\geq m-1}$, with $m$ like in Lemma~\ref{lemma:lemma}, then $x_0,\dots,x_{d-1}$ is a generic sequence on $R/I^{sat}$. 
\end{lemma}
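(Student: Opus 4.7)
I would argue by induction on $d = \dim(R/J)$. The base case $d=0$ is vacuous. For $d\geq 1$ the argument splits in two parts: (a) showing that $x_0$ is a non-zero-divisor on $R/I^{sat}$, which yields the generic-sequence condition at index $i=0$; and (b) using Lemma~\ref{lemma:lemma} to pass to a hyperplane section in $\kk[x_1,\dots,x_n]$ in which the remaining indices are handled by the inductive hypothesis.

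For part~(a), I would exploit the direct-sum decomposition $R_t = I_t \oplus \langle \mathcal N(J)_t\rangle_\kk$ coming from the $\mathcal P_J$-marked basis, together with the fact that $J^{sat}$, being saturated quasi-stable, satisfies $x_0 \cdot \mathcal N(J^{sat}) \subseteq \mathcal N(J^{sat})$ (no term of $\mathcal P_{J^{sat}}$ is divisible by $x_0$), so $x_0$ is already a non-zero-divisor on $R/J^{sat}$. For $t \geq m-1$ one has $I_t = I^{sat}_t$ and $\mathcal N(J)_t = \mathcal N(J^{sat})_t$, so given $g \in R_t$ with $x_0 g \in I^{sat}_{t+1}$, decomposing $g = g_1 + g_2$ along the direct sum and evaluating in degree $t+1$ yields
\[
x_0 g_2 \;=\; x_0 g - x_0 g_1 \;\in\; I^{sat}_{t+1} \cap \langle \mathcal N(J^{sat})_{t+1}\rangle_\kk \;=\; \{0\},
\]
forcing $g_2 = 0$ and hence $g \in I^{sat}_t$. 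For $t < m-1$, for each variable $x_i$ I would pick $N$ large enough that $\deg(x_i^N g) \geq m-1$ and apply the previous case to $x_0(x_i^N g) \in I^{sat}$; this forces $x_i^N g \in I^{sat}$, hence $g \in (I : x_i^\infty)$, and intersecting over $i$ gives $g \in I^{sat}$.

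For part~(b), set $\bar R = \kk[x_1,\dots,x_n]$, $\bar I = ((I,x_0)/(x_0))_{\geq m}$ and $\bar J = ((J^{sat},x_0)/(x_0))^{sat}$. Lemma~\ref{lemma:lemma} yields $\bar I \in \MFFunctor{\bar J_{\geq m}}(\kk)$, with $\bar J$ saturated quasi-stable in $\bar R$ and $\dim(\bar R/\bar J) = d-1$. The cited truncation statement from \cite{BCRAffine} gives $\bar I = (\bar I^{sat})_{\geq m}$, so $(\bar I, \bar J)$ satisfies the hypothesis of the present lemma in one fewer variable. The inductive hypothesis then produces that $x_1,\dots,x_{d-1}$ is a generic sequence on $\bar R/\bar I^{sat}$. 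A direct correspondence-theorem check identifies $\bar R/\bar I^{sat}$ with $R/(I^{sat},x_0)^{sat}$, which combined with part~(a) completes the sequence.

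The main obstacle is ensuring that the clause ``$m$ like in Lemma~\ref{lemma:lemma}'' persists through every level of the induction, i.e.\ that the fixed $m$ still exceeds the new satiety bound $\rho_{\bar J}$ attached to $\bar J$ in $\bar R$. I would dispose of this up front via Proposition~\ref{prop:sottoschema}(1): since $\MFScheme{J_{\geq t}}\cong \MFScheme{J_{\geq t+1}}$ for every $t\geq \rho_J-1$, replacing $m$ by any larger integer alters neither $I^{sat}$ nor the statement, so $m$ may be assumed large enough to validate every application of Lemma~\ref{lemma:lemma} needed along the induction.
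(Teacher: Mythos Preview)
Your proposal is correct and follows essentially the same inductive route as the paper: the paper also argues by iterating Lemma~\ref{lemma:lemma} over successive hyperplane sections, concluding via $I^{sat}=((I^{sat})_{\geq m+d-2})^{sat}$. The one substantive difference is your part~(a): where the paper simply invokes \cite[Theorem~3.5]{BCRAffine} to obtain that $x_0$ is generic on $R/I$ (equivalently $I^{sat}=(I:x_0^\infty)$), you supply a self-contained argument via the direct-sum decomposition and the stability of $\mathcal N(J^{sat})$ under multiplication by $x_0$; this makes your write-up more elementary at the cost of re-proving a special case of the cited result. Two minor points: in the low-degree case you mean $g\in (I^{sat}:x_i^\infty)$ rather than $(I:x_i^\infty)$, and the conclusion $\bigcap_i (I^{sat}:x_i^\infty)=I^{sat}$ uses that $I^{sat}$ has no associated prime equal to $\mathfrak m$; your handling of the satiety bound on $m$ through the induction is also more explicit than the paper's, which silently lets the truncation level drift.
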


\begin{proof}
By \cite[Theorem~3.5]{BCRAffine}, $x_0$ is generic on $R/I$. If $d=1$ we have finished. Otherwise, by Lemma \ref{lemma:lemma} and with the same notation, $\left(\frac{(I,x_0)}{(x_0)}\right)_{\geq m}$ belongs to $\MFFunctor{J'_{\geq m}}(\kk)$ and we can repeat the same argument on $\left(\frac{(I,x_0)}{(x_0)}\right)_{\geq m}$ obtaining that $x_0,x_1$ is a generic sequence on $R/(I^{sat})_{\geq m}$. Then we repeat the same argument until we obtain the thesis observing that $I^{sat}=((I^{sat})_{\geq m+d-2})^{sat}$.
\end{proof}

We have already observed that every  quasi-stable ideal $H$ with $R/H$ of Krull-dimension $d$ is CM if and only if $x_0,\dots,x_{d-1}$ is a $R/H$-regular sequence. This result can be extended to any ideal generated by a marked basis over the truncation of a quasi-stable ideal, under the same hypothesis of Lemma \ref{lemma:generic sequence}.

\begin{proposition} \label{prop:CM}
If $J=(J^{sat})_{\geq m-1}$ and $I=(I^{sat})_{\geq m-1}$ with $m$ like in Lemma~\ref{lemma:lemma}, then the ideal $I^{sat}$ is CM if and only if $x_0,\dots,x_{d-1}$ is a regular sequence on $R/I^{sat}$.
\end{proposition}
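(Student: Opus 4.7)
My plan is to prove the two implications separately, leaning on Lemma~\ref{lemma:generic sequence}, which already guarantees that $x_0,\dots,x_{d-1}$ is a generic sequence on $R/I^{sat}$ under the stated hypotheses. What remains is to translate ``generic'' into ``regular'' under the CM assumption (for one direction) and to extract the CM property from regularity (for the other).

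For the forward direction ($\Rightarrow$), I would first upgrade genericity to a homogeneous system of parameters: at each step, the defining property that $x_i$ is a non-zero-divisor on $R/(I^{sat},x_0,\dots,x_{i-1})^{sat}$ forces the Krull dimension to drop by exactly one, since saturation does not alter Krull dimension as long as the quotient has positive dimension. After $d$ steps the quotient becomes Artinian, so $x_0,\dots,x_{d-1}$ is a homogeneous system of parameters on $R/I^{sat}$. Assuming $I^{sat}$ is CM, I would then invoke the classical fact that on a Cohen-Macaulay graded module every homogeneous system of parameters is a regular sequence, finishing this direction.

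For the reverse direction ($\Leftarrow$), the argument is essentially immediate from the definition of depth: if $x_0,\dots,x_{d-1}$ is an $R/I^{sat}$-regular sequence then $\mathrm{depth}(R/I^{sat})\geq d$, and combined with the universal inequality $\mathrm{depth}(R/I^{sat})\leq \dim(R/I^{sat}) = d$ this forces equality and therefore the CM property.

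The only delicate point is the bookkeeping around saturations in the forward step — one must ensure that the saturations appearing in the genericity condition do not distort the dimension count. This is handled by the observation that saturation removes only primary components supported on the irrelevant ideal, leaving Krull dimension unchanged as long as $d\geq 1$, which is in force throughout Section~\ref{sec:truncation}.
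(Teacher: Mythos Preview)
Your proof is correct. The reverse implication matches the paper exactly. For the forward implication, you and the paper both start from Lemma~\ref{lemma:generic sequence}, but you take a slightly different route to finish: you first argue that a generic sequence is automatically a homogeneous system of parameters (via the dimension-drop bookkeeping you outline), and then invoke the single theorem that every system of parameters on a Cohen--Macaulay module is regular. The paper instead proceeds inductively: because $I^{sat}$ is CM, each successive section $(I^{sat},x_0,\dots,x_{i-1})$ is again CM and therefore already \emph{saturated} in positive dimension, which collapses the generic condition ``$x_i$ is a non-zero-divisor modulo the saturation'' to plain regularity at each step. Your approach cleanly separates the geometric input (generic $\Rightarrow$ system of parameters, which does not use CM) from the algebraic input (CM $\Rightarrow$ every system of parameters is regular); the paper's approach stays entirely within the saturation language already set up in Section~\ref{sec:truncation} and avoids naming the system-of-parameters theorem explicitly.
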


\begin{proof}
If $x_0,\dots,x_{d-1}$ is a regular sequence on $R/I^{sat}$ then $I^{sat}$ is CM by definition. Conversely, recall that   $x_0,\dots,x_{d-1}$ is a generic sequence on $R/I^{sat}$, by Lemma \ref{lemma:generic sequence}. Since $I^{sat}$ is CM by hypothesis, then its generic linear sections are saturated too and hence $x_0,\dots,x_{d-1}$ is a $R/I^{sat}$-regular sequence.
\end{proof}

In the same hypotheses of Proposition \ref{prop:CM}, we give the following computational strategy to check if $I^{sat}$ is~CM.

\begin{algo} \label{algo 1}
Let $J=(J^{sat})_{\geq m-1}$ be a quasi-stable ideal and $I=(I^{sat})_{\geq m-1}\in \MFFunctor{J_{\geq m}}(\kk)$ with $m \geq \mathrm{sat}\bigl(\frac{(J,x_0)}{(x_0)}\bigr)$. Let $d:=\dim(R/J)$. The following instructions allow to check if $I^{sat}$ is CM or not.
\begin{itemize}
\item[(1)] Compute $I^{sat}$ and set $k:=1$.
\item[(2)] Compute a marked basis of the ideal $N:=\bigl(\frac{(I,x_{k-1})}{(x_{k-1})} \bigr)_{\geq m-1}$ and then compute its saturation $N^{sat}$.
\item[(3)] If the first difference of the Hilbert function of $I^{sat}$ does not coincide with the Hilbert function of $N^{sat}$, then $I^{sat}$ is not CM and the procedure ends. Otherwise, if $d=1$ then $I^{sat}$ is~CM and the procedure ends, if $d>1$ then reset $d:=d-1$, $k:=k+1$, $I:=N$ (hence $I^{sat}=N^{sat}$), $m:=\max\left\{m, \mathrm{sat}\bigl(\frac{(J,x_0,\dots,x_{k-1})}{(x_0,\dots,x_{k-1})}\bigr)\right\}$, and go to step (2). 
\end{itemize}
\end{algo}

\begin{proof}
For what concerns item (1), we observe that the equality $I^{sat}=(I:x_0^{\infty})$ holds thanks to \cite[Theorem 3.5]{BCRAffine} and we will give a method to compute it by marked bases in next Theorem \ref{th:saturation}.

For what concerns item (2), thanks to Lemma \ref{lemma:lemma} the ideal $N:=\bigl(\frac{(I,x_{k-1})}{(x_{k-1})} \bigr)_{\geq m-1}$ belongs to $\MFFunctor{\frac{(J,x_0)}{(x_0)}_{\geq m-1}}(\kk)$ and we can compute a marked basis of $N$. Moreover, we can compute $N^{sat}$ thanks to  next Theorem \ref{th:saturation} because $N^{sat}=(N:x_{k}^{\infty})$ by \cite[Theorem~3.5]{BCRAffine} again.

For what concerns item (3), it is enough to observe that the check on the Hilbert functions is equivalent to check if $x_0,\dots,x_{d-1}$ is a $R/I^{sat}$-regular sequence, and hence that $I^{sat}$ is CM by Proposition \ref{prop:CM}. 
\end{proof}

The strategy of Algorithm \ref{algo 1} is pretty standard, except for the computational method that we now propose for the saturation of the ideals involved in the strategy. Indeed, by arguments analogous to those we use in Section \ref{sec:Gore}, we obtain the following description of $I^{sat}=(I:x_0^{\infty})$. 

\begin{theorem} \label{th:saturation}
Let $J$ and $I$ be ideals in $R$ such that, for some $m\geq \rho$,  $J=(J^{sat})_{\geq m-1}$ and $I=(I^{sat})_{\geq m-1}$, and $I$ is generated by the $\mathcal P_J$-marked basis $\mathcal H$.

Let $\mathcal H_0=\{h_{\alpha_1},\dots,h_{\alpha_r}\}\subseteq \mathcal H$ be the set  made of the marked polynomials in $\mathcal H$ with head term divisible by $x_0$.
For every polynomial $h_{\alpha_i}\in \mathcal H_0$, let $h_{\alpha_i}=h'_{\alpha_i,k}+h''_{\alpha_i,k}$ be the decomposition of $h_{\alpha_i}$ such that the terms in $h'_{\alpha_i,k}$ are divisible by $x_0^k$ and the terms in $h''_{\alpha_i,k}$ are not divisible by~$x_0^k$. 

Let $\mathcal S$ be the set
\[
\mathcal S= \left\{\sum_{i=1}^r c_{\alpha_i,k} \frac{h'_{\alpha_i,k}}{x_0^k} : c_{\alpha_i,k}\in \kk, x_0^k \ \vert \ \mathrm{Ht}(h_{\alpha_i}), \sum_{i=1}^r c_{\alpha_i,k} h''_{\alpha_i,k} =0, \ k \in\{1,\dots,m-2\} \right\}
\]

Then, we have the following (graded) decomposition:
$$(I:x_0^\infty)= I \oplus \left\langle \mathcal S\right\rangle_{\mathbb K}.$$
\end{theorem}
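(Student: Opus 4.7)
The plan is to prove the direct sum decomposition by establishing three facts: \emph{(i)} $\mathcal S \subseteq (I:x_0^\infty)$; \emph{(ii)} every $f \in (I:x_0^\infty)$ can be written as $f = p + s$ with $p \in I$ and $s \in \langle \mathcal S \rangle_{\mathbb K}$; and \emph{(iii)} the sum $I + \langle \mathcal S \rangle_{\mathbb K}$ is direct.

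Fact \emph{(i)} is a direct computation: for $s = \sum_i c_{\alpha_i,k}\frac{h'_{\alpha_i,k}}{x_0^k} \in \mathcal S$, the defining constraint $\sum_i c_{\alpha_i,k} h''_{\alpha_i,k} = 0$ yields
\[
x_0^k s = \sum_i c_{\alpha_i,k}\bigl(h'_{\alpha_i,k} + h''_{\alpha_i,k}\bigr) = \sum_i c_{\alpha_i,k} h_{\alpha_i} \in I,
\]
so $s \in (I:x_0^k) \subseteq (I:x_0^\infty)$.

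The main content is \emph{(ii)}. I would take $f \in (I:x_0^\infty)$ and reduce it modulo the marked basis $\mathcal H$, obtaining the unique decomposition $f = p + r$ with $p \in I$ and $r \in \langle \mathcal N(J) \rangle_{\mathbb K}$ (Definition~\ref{def:sets and bases}); then $r = f - p \in (I:x_0^\infty)$ as well. For each homogeneous component $r_t$ of $r$: if $t \geq m-1$, then $r_t \in (I^{sat})_t = I_t$ by the truncation hypothesis, so $r_t \in I_t \cap \langle \mathcal N(J)_t \rangle_{\mathbb K} = \{0\}$ by the marked basis property; if $t \leq m-2$, set $k = m-1-t \in \{1,\ldots,m-2\}$, so $x_0^k r_t \in (I^{sat})_{m-1} = I_{m-1}$. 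Since every element of $\mathcal P_J$ has degree at least $m-1$, the set $\mathcal H^{(m-1)}$ consists exactly of the degree-$(m-1)$ elements of $\mathcal H$ (no multiplicative-variable multiples can contribute), so $I_{m-1}$ has $\mathbb K$-basis $\{h_\alpha \in \mathcal H : \deg h_\alpha = m-1\}$, yielding a unique expansion $x_0^k r_t = \sum_i c_i h_{\alpha_i}$. Decomposing each $h_{\alpha_i} = h'_{\alpha_i,k} + h''_{\alpha_i,k}$ and sorting the terms appearing on both sides by their $x_0$-exponent: every term of $\sum_i c_i h'_{\alpha_i,k}$ has $x_0$-exponent $\geq k$, every term of $\sum_i c_i h''_{\alpha_i,k}$ has $x_0$-exponent $<k$, while $x_0^k r_t$ contains only terms of $x_0$-exponent $\geq k$; comparison forces $\sum_i c_i h''_{\alpha_i,k} = 0$ and $\sum_i c_i h'_{\alpha_i,k} = x_0^k r_t$, whence $r_t = \sum_i c_i \frac{h'_{\alpha_i,k}}{x_0^k}$. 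Moreover, the head terms $x^{\alpha_i} \in \mathcal P_J$ cannot be canceled by the tail terms of other $h_{\alpha_j}$ (which lie in $\mathcal N(J)$, disjoint from $\mathcal P_J$), so any $c_i \neq 0$ forces $x_0^k \mid x^{\alpha_i}$, i.e., $h_{\alpha_i} \in \mathcal H_0$; this exhibits $r_t$ as an element of $\mathcal S$, and hence $r \in \langle \mathcal S \rangle_{\mathbb K}$.

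For \emph{(iii)}, the contributions to $\mathcal S$ identified in \emph{(ii)} are of homogeneous degree $t \leq m-2$, and each of their terms has the form $x^{\alpha_i}/x_0^k$ or (tail term)$/x_0^k$ of the same degree $t < m-1$. Since $J = (J^{sat})_{\geq m-1}$ has no terms of degree $< m-1$, all these terms lie in $\mathcal N(J)$, whence the relevant part of $\langle \mathcal S \rangle_{\mathbb K}$ sits inside $\langle \mathcal N(J) \rangle_{\mathbb K}$, and the marked basis property gives $I \cap \langle \mathcal S \rangle_{\mathbb K} \subseteq I \cap \langle \mathcal N(J) \rangle_{\mathbb K} = \{0\}$. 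The main obstacle is the $x_0$-exponent bookkeeping in step \emph{(ii)}: one must extract both the cancellation $\sum_i c_i h''_{\alpha_i,k} = 0$ and the explicit form of $r_t$ from the single identity $x_0^k r_t = \sum_i c_i h_{\alpha_i}$, which relies crucially on the disjointness of $\mathcal P_J$ and $\mathcal N(J)$ together with the fact that $\mathcal H^{(m-1)}$ contains no multiplicative-variable multiples of smaller-degree Pommaret basis elements.
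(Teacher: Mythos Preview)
Your argument is essentially the same as the paper's: reduce $f \in (I:x_0^\infty)$ to its normal form $r \in \langle \mathcal N(J) \rangle_{\mathbb K}$, then express the homogeneous components $r_t$ (for $t \leq m-2$) as elements of $\mathcal S$ by multiplying up to degree $m-1$ and using that $I_{m-1}$ is spanned by the degree-$(m-1)$ elements of $\mathcal H$. The paper phrases the choice of $k$ as ``the smallest exponent with $x_0^k f \in I$'' and invokes \cite[Lemma~3.4]{BCRAffine} to deduce the resulting degree; your direct choice $k = m-1-t$ via the truncation hypothesis is equivalent and arguably cleaner.

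There is one small gap in your step~\emph{(iii)}. You argue only that the specific elements $r_t$ produced in~\emph{(ii)} lie in $\langle \mathcal N(J) \rangle_{\mathbb K}$, and then conclude $I \cap \langle \mathcal S \rangle_{\mathbb K} \subseteq I \cap \langle \mathcal N(J) \rangle_{\mathbb K}$; but this last inclusion requires \emph{all} of $\langle \mathcal S \rangle_{\mathbb K}$ to sit inside $\langle \mathcal N(J) \rangle_{\mathbb K}$, not just ``the relevant part''. The missing observation (which the paper records at the start of its second inclusion, citing \cite[Lemma~3.4]{BCRAffine}) is that every $h_{\alpha_i} \in \mathcal H_0$ has degree exactly $m-1$: if $x_0 \mid x^{\alpha_i} \in \mathcal P_J$ then $x^{\alpha_i}/x_0 \notin J$ (else $x^{\alpha_i}$ would not be in the Pommaret basis), yet $x^{\alpha_i}/x_0 \in J^{sat}$ because $J^{sat}$ is saturated, forcing $\deg(x^{\alpha_i}/x_0) < m-1$. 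Once this is in place, every element of $\mathcal S$ has degree at most $m-2$ and hence supports only terms of $\mathcal N(J)$, so $I \cap \langle \mathcal S \rangle_{\mathbb K} = 0$. The paper itself leaves the directness entirely implicit, so with this one-line addition your proof is in fact more explicit than the original.
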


\begin{proof}
Since $(I:x_0^\infty)\supseteq I$, to obtain a first inclusion it is sufficient to prove that $(I:x_0^\infty)$ contains any polynomial  in $\mathcal S$. 

Consider $g\in \mathcal S$: $g=\sum_{i=1}^r c_{\alpha_i,k} \frac{h'_{\alpha_i,k}}{x_0^k}$, with $\sum_{i=1}^r c_{\alpha_i,k} h''_{\alpha_i,k} =0$. So, we can write
\[
x_0^kg=\sum_{i=1}^r c_{\alpha_i,k}h'_{\alpha_i,k}+\sum_{i=1}^r c_{\alpha_i,k} h''_{\alpha_i,k} =\sum_{i=1}^r  c_{\alpha_i,k}h_{\alpha_i}.
\]
This proves that $g$ belongs to $(I:x_0^k)$.

In order to prove the other inclusion, under the current hypotheses on $J$ and $I$, first we note that every polynomial in $\mathcal H_0$ has degree $m-1$ by \cite[Lemma 3.4]{BCRAffine}. 

Consider $f\in (I:x_0^\infty)$. By using the $\mathcal P_J$-marked basis $\mathcal H$, we obtain the writing $f=\sum_{h_\alpha \in\mathcal H} P_\alpha h_\alpha +\tilde f$, where the support of $\tilde f$ is contained in $\mathcal N(J)$.

If $\tilde f=0$, then $f$ belongs to $I$. Otherwise, we consider the smallest exponent $k$ such that $x_0^kf\in I$. Again by \cite[Lemma 3.4]{BCRAffine}, $x_0^kf$ has degree $m-1$. Furthermore, $x_0^k\tilde f$ belongs to $I$ too, hence we can rewrite it by the polynomials in $\mathcal H$.

Let $\tau$ be a term in $\supp(\tilde f)$ such that $x_0^k\tau \in J$. Hence, there is $x^\alpha \in \mathcal P_{J}$ such that $x_0^k\tau= x^\delta x^\alpha$ with $\max(x^\delta)\leq \min(x^\alpha)$. By \cite[Lemma 2.7 (iv)]{BCRAffine}, $\min(x^\alpha)=x_0$ and being $\deg(x^\alpha)=m-1=k+\deg(\tilde f)$, we have that $x_0^k$ divides $x^\alpha$. Hence in the rewriting procedure on $x_0^k\tilde f$ we use only polynomials in $\mathcal H_0$ whose head term is divided by $x_0^k$, and every new term that is introduced by a rewriting step belongs to the sous-escalier of $J$ and hence it is not rewritable.

Then  we can write:
\[x_0^k \tilde f = \sum_{i=1}^r c_{\alpha_i,k} h_{\alpha_i} = \sum_{i=1}^r c_{\alpha_i,k} h'_{\alpha_i,k} + \sum_{i=1}^r c_{\alpha_i,k} h''_{\alpha_i,k}.\]
This is possible if and only if $\sum c_{\alpha_i,k} h''_{\alpha_i,k}=0$, where the coefficients $c_{\alpha_i,k}$ belong to~$\mathbb K$.
\end{proof}

\begin{example} 
In the ring $\mathbb K[x_0,x_1,x_2,x_3]$ with $x_0<\dots<x_3$, consider the saturated quasi-stable but not CM ideal $J=(x_3^2,x_2x_3,x_1^2x_3,x_2^4)$ with $\rho=3$. The Krull dimension of the quotient over $J$ is~$2$. We take $m=4$ and the truncation $J_{\geq 3}=(x_3^3, x_2x_3^2, x_2^2x_3,$ $x_1x_3^2, x_1x_2x_3, x_0x_3^2,$ $x_1^2x_3, x_0x_2x_3, x_2^4)$ and the ideal $I$ generated by the following $\mathcal P_{J_{\geq 3}}$-marked basis

$\{ x_3^3,  \ x_2x_3^2,  \ x_2^2x_3 + x_2^3 + 2x_1x_2^2 + x_1^2x_2, \ x_1x_3^2,  \ x_1x_2x_3 + x_1x_2^2 + 2x_1^2x_2 + x_1^3,$ 

$\ x_0x_3^2, \ x_1^2x_3 - x_2^3 - 4x_1x_2^2 - 5x_1^2x_2 - 2x_1^3,
\ x_0x_2x_3 + x_0x_2^2 + 2x_0x_1x_2 + x_0x_1^2,$

$\ x_2^4 + 4x_1x_2^3 + 6x_1^2x_2^2 + 4x_1^3x_2 + x_1^4
\}.$

\noindent By \cite[Lemma 9.4]{BCF}, in $\mathbb K[x_1,x_2,x_3]$, we compute the marked basis of $N:=\Bigl(\frac{(I,x_0)}{(x_0)}\Bigr)_{\geq 3}$ on $\Bigl(\frac{(J,x_0)}{(x_0)}\Bigr)_{\geq 3} \ = \
\{x_3^3, x_2x_3^2, x_2^2x_3, x_1x_3^2, x_1x_2x_3,  x_1^2x_3,  x_2^4\}$, obtaining

$\mathcal H=\{ h_1=x_3^3,  \ h_2=x_2x_3^2,  \ h_3=x_2^2x_3 + x_2^3 + 2x_1x_2^2 + x_1^2x_2, \ h_4=x_1x_3^2,$  

$\ h_5=x_1x_2x_3 + x_1x_2^2 + 2x_1^2x_2 + x_1^3,\ h_6=x_1^2x_3 - x_2^3 - 4x_1x_2^2 - 5x_1^2x_2 - 2x_1^3,$

$\ h_7=x_2^4 + 4x_1x_2^3 + 6x_1^2x_2^2 + 4x_1^3x_2 + x_1^4
\}$

\noindent and applying Theorem \ref{th:saturation} with $x_1$ in place of $x_0$ 
$$N^{sat}=N\oplus \langle x_3^2, x_2x_3+x_2^2+2x_1x_2+x_1^2\rangle_{\kk}= (x_3^2, x_2x_3+x_2^2+2x_1x_2+x_1^2).$$
The Hilbert function of $I^{sat}$ is $1 \ 4t$ and its first derivative is $1 \ 3 \ 4 \ 4 \dots$. Since the Hilbert function of $N^{sat}$ is $1 \ 3 \ 4 \ 4 \dots$ too, we can conclude that $I^{sat}$ is CM. 

In order to give some more details of the computation of $N^{sat}$, consider $\mathcal H_1=\{ h_4=x_1x_3^2, \ h_5=x_1x_2x_3 + x_1x_2^2 + 2x_1^2x_2 + x_1^3,\ h_6=x_1^2x_3 - x_2^3 - 4x_1x_2^2 - 5x_1^2x_2 - 2x_1^3\}$. 

For $k=1$ the condition in Theorem \ref{th:saturation} is:

$c_{4,1}x_3^2+c_{5,1}(x_2x_3+x_2^2+2x_1x_2+x_1^2)+c_{6,1}(x_1^2x_3 - 4x_1x_2^2 - 5x_1^2x_2 - 2x_1^3)$

\noindent such that $c_{6,1}x_2^3=0$, which implies $c_{6,1}=0$ and $c_{4,1},c_{5,1}\in \kk$.

For $k=2$ the condition in Theorem \ref{th:saturation} is:

$c_{6,2}(x_3 - 5x_2 - 2x_1)$ 

\noindent such that $c_{6,2}(-x_2^3 -  4x_1x_2^2)=0$, which implies $c_{6,2}=0$.
\end{example}

\begin{example}
This example is inspired from \cite[Example 9.10]{BCF}. Consider the saturated quasi-stable but not CM ideal 
$J=J^{sat}=(x_2x_4,x_4^2,x_1x_3x_4,x_2x_3^2,x_3^3,x_3^2x_4)$ in $\mathbb K[x_0,\dots,x_4]$, with $m=\rho=3$ and $x_0<\dots<x_4$. Observe that $J_{\geq 2}$ coincides with $J$. The saturated ideal $I^{sat}=I=(x_2x_4-x_3^2-x_3x_4,x_4^2,x_1x_3^2+x_1x_3x_4,x_2x_3^2,x_3^3,
x_3^2x_4)$ coincides with $I_{\geq 2}$ and is generated by a $\mathcal P_{J_{\geq 2}}$-marked basis. 
By \cite[Lemma 9.4]{BCF}, in $\mathbb K[x_1,\dots,x_4]$ we can compute the basis of $\Bigl(\frac{(I,x_0)}{(x_0)}\Bigr)_{\geq 3}$ that is marked on the Pommaret basis of $\Bigl(\frac{(J,x_0)}{(x_0)}\Bigr)_{\geq 3}=( x_4^3, x_3x_4^2, x_2x_4^2, x_1x_4^2, x_2x_3x_4, x_2^2x_4,$ $x_3^2x_4, x_2x_3^2, x_3^3, x_1x_2x_4,$ $x_0x_2x_4, x_0x_4^2, x_1x_3x_4)$, \hskip 1mm and obtain
$$N:=(x_4^3,x_3x_4^2,x_2x_4^2,x_1x_4^2, x_2x_3x_4, x_2^2x_4,x_3^2x_4,
x_2x_3^2,x_3^3,x_1x_2x_4-x_1x_3^2, x_1x_3^2+x_1x_3x_4)$$
and applying Theorem \ref{th:saturation}
$$N^{sat}=(x_4^3,x_4^2,
x_2x_3x_4,x_3^2x_4,x_2x_3^2,x_3^3,x_2x_4-x_3^2,x_3^2+x_3x_4).$$
The Hilbert function of $I^{sat}$ is $1 \ 5 \ t^2+4t+1$ and its first difference is $1 \ 4 \ 8 \ 2t+3$, but the Hilbert function of $N^{sat}$ is different, being $1 \ 4 \ 2t+3$. Hence, $I^{sat}$ is not a CM ideal.
\end{example}

\section{Gorenstein conditions by marked bases}
\label{sec:Gore}

In a polynomial ring over a field $\kk$, the study of Gorenstein homogeneous ideals can be reduced to the study of Artinian homogeneous ideals, like for CM ideals.

Indeed, a Cohen-Macaulay ideal $I$ is Gorenstein if and only if its Artinian reduction is Gorenstein or, equivalently, the socle of its Artinian reduction has dimension~$1$ as a $\mathbb K$-vector space or, equivalently, the last module of its minimal free resolution has rank $1$ (see \cite[Proposition~21.5 and Corollary~21.16]{Ei}). 
If $I\subseteq R$ is a Gorenstein ideal, we say that $R/I$ is a Gorenstein ring. 
It is noteworthy that the Hilbert function of every Artinian graded Gorenstein $\mathbb K$-algebra is symmetric.

A closed projective scheme defined by a homogeneous polynomial ideal $I$ is \emph{arithmetically Gorenstein} if and only, if $I^{sat}$ is Gorenstein. 

The {\em arithemtically Gorenstein locus} in a Hilbert scheme is the subset of points corresponding to arithmetically Gorenstein schemes.

Thanks to Proposition \ref{prop:Artinian reduction}, if $J$ is a CM quasi-stable ideal with $d$ as Krull dimension of $R/J$ and $I$ is an ideal with a $\mathcal P_J$-marked basis, then the quotient $(I+(x_0,\dots,x_{d-1}))/(x_0,\dots,x_{d-1})$ is an Artinian reduction of $I$ with marked basis over the Artinian quasi-stable ideal $(J+(x_0,\dots,x_{d-1}))/(x_0,\dots,x_{d-1})$. Hence, $I$ is Gorenstein if and only if $(I+(x_0,\dots,x_{d-1}))/(x_0,\dots,x_{d-1})$ is Gorenstein.

\begin{remark}\label{rem:Gore}
Recall that a monomial ideal is Gorenstein if and only if it is a complete intersection (see \cite{Beintema}, for example). The ideal $I$ in Example \ref{ex:regsale} is Gorenstein and is generated by a $\mathcal P_J$-marked basis, where $J$ is a non-Gorenstein quasi-stable ideal.
On the other hand, in Example \ref{ex:example one} we will  find a Gorenstein quasi-stable ideal $J$ and a $\mathcal P_J$-marked basis $\mathcal H$ generating an ideal which is not Gorenstein.
\end{remark}

We now aim to describe the non-trivial elements of the socle of an Artinian ideal generated by a $\mathcal P_J$-marked basis. To this end, we adapt a method for socle computation due to Seiler~\cite[Theorem 5.4]{Seiler2012} (slightly corrected in~\cite[Satz 63]{Matthias}) which is valid for Artinian ideals generated by Pommaret (Gr\"obner) bases with respect to the degree reverse lexicographic term order. We use the following notation.


Let $I$ be an Artinian homogeneous ideal generated by a $\mathcal P_J$-marked basis $\mathcal H=\{h_1,\dots,h_t\}$ and let $\mathcal H_0=\{h_{\alpha_1},\dots,h_{\alpha_r}\}$ be the subset of $\mathcal H$ made of the polynomials with head term divisible by $x_0$. For every polynomial $h\in \mathcal H_0$, let $h=h'+h''$ be the decomposition of $h$ such that $h'$ is divisible by $x_0$ and $h''$ is linear combination of terms that are not divisible by $x_0$.

For each $k\in\{1,\ldots,n\}$, we associate to $\mathcal{H}$ the square matrix $A_k(\mathcal{H})\in\kk^{r\times r}$ whose entry in row $i$ and column $j$ is the constant term of the coefficient polynomial $P_{h_{\alpha_i}}$ in the {\em standard representation} $x_k h_{\alpha_j}=\sum_{h\in\mathcal{H}}P_h h$ (see \eqref{eq:standard representation}).


\begin{theorem}\label{th:socle}
Let $I$ be an Artinian homogeneous ideal generated by the $\mathcal P_J$-marked basis $\mathcal{H}$, with associated matrices $A_k:=A_k(\mathcal{H})$ for $k=1,\ldots,n$. Let ${\mathfrak m}$ be the irrelevant maximal ideal in $\mathbb K[x_0,\dots,x_n]$. Then the ideal quotient $(I:\mathfrak{m})$ is a direct sum of $\kk$-vector spaces as follows:
$$(I:{\mathfrak m})= I \oplus \langle\Bigl\{\sum_{i=1}^r c_i \frac{h'_{\alpha_i}}{x_0} : c_i\in \kk, \sum_{i=1}^r c_i h''_{\alpha_i} =0 \text{ and } A_k {\bf c}=\mathbf{0}, \forall \ k\in \{1,\dots,n\} \Bigr\}\rangle_{\kk}.$$
\end{theorem}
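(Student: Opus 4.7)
My proof plan proceeds by identifying the socle $(I:\mathfrak m)/I$ with a subspace of $\langle\mathcal N(J)\rangle_\kk$ via normal forms, then matching it against the described set. Directness of the sum is immediate: each summand $h'_{\alpha_i}/x_0$ has head $x^{\alpha_i}/x_0\in\mathcal N(J)$ (by uniqueness of the Pommaret decomposition of $x^{\alpha_i}$, as in the proof of Theorem~\ref{th:saturation}) and its remaining terms are sous-escalier elements divided by $x_0$; hence the described set sits in $\langle\mathcal N(J)\rangle_\kk$, which meets $I$ only in $\{0\}$ by the marked basis property.

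For the inclusion $(\supseteq)$, I fix $g=\sum_i c_i h'_{\alpha_i}/x_0$ satisfying the three stated conditions. The case $\ell=0$ is direct: $x_0 g=\sum_i c_ih'_{\alpha_i}=\sum_i c_ih_{\alpha_i}-\sum_i c_ih''_{\alpha_i}=\sum_i c_ih_{\alpha_i}\in I$. For $\ell\ge1$, multiplying the equality $x_0g=\sum_j c_jh_{\alpha_j}$ by $x_\ell$ and expanding each $x_\ell h_{\alpha_j}=\sum_i P_{ij}^{(\ell)}h_{\alpha_i}$ in standard representation gives
\[
x_0(x_\ell g)=\sum_i B_i\,h_{\alpha_i},\qquad B_i:=\sum_j c_jP_{ij}^{(\ell)}.
\]
Since $x_0$ is not a zero-divisor in $R$, the inclusion $x_\ell g\in I$ is equivalent to $x_0\mid B_i$ for every $i$ (the quotient $B_i/x_0$ is then automatically in the multiplicative variables of $h_{\alpha_i}$). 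For $h_{\alpha_i}\in\mathcal H_0$ the multiplicative variables of $x^{\alpha_i}$ reduce to $\{x_0\}$, so $P_{ij}^{(\ell)}\in\kk[x_0]$ and divisibility is equivalent to vanishing of the constant term, i.e.\ to $(A_\ell\mathbf c)_i=0$, which holds by hypothesis. The main technical obstacle is the remaining case $h_{\alpha_i}\notin\mathcal H_0$: reducing modulo $x_0$ and using $\overline{h_{\alpha_j}}=h''_{\alpha_j}$ for $j\in\mathcal H_0$ together with $\sum_j c_jh''_{\alpha_j}=0$ and $A_\ell\mathbf c=\mathbf 0$ leaves $\sum_{i\notin\mathcal H_0}\overline{B_i}\,\overline{h_{\alpha_i}}=0$ in $\kk[x_1,\dots,x_n]$; since the heads $\{x^{\alpha_i}\}_{i\notin\mathcal H_0}$ form the Pommaret basis of $(J+(x_0))/(x_0)$ (by uniqueness of the Pommaret decomposition of $x_0$-free monomials of $J$), a marked-basis argument on $\{\overline{h_{\alpha_i}}\}_{i\notin\mathcal H_0}$ in $\kk[x_1,\ldots,x_n]$ (parallel to the one underlying \cite[Theorem~5.4]{Seiler2012}) forces each $\overline{B_i}=0$, i.e.\ $x_0\mid B_i$ as required.

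For the inclusion $(\subseteq)$, I replace $f\in(I:\mathfrak m)$ by $g:=\mathrm{Nf}_I(f)\in\langle\mathcal N(J)\rangle_\kk$ and analyze $x_0g\in I$ first. By uniqueness of the Pommaret decomposition, any $x^\eta\in\mathcal N(J)$ with $x_0x^\eta\in J$ must satisfy $x_0x^\eta=x^{\alpha_i}$ for some $h_{\alpha_i}\in\mathcal H_0$ (the alternative decomposition is ruled out since it would force $x^\eta\in J$). Performing the reduction of $x_0g$ and demanding vanishing of its normal form, split according to $x_0$-divisibility of the resulting terms, simultaneously yields the shape $g=\sum_i c_ih'_{\alpha_i}/x_0$ and the constraint $\sum_i c_ih''_{\alpha_i}=0$. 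The conditions $x_\ell g\in I$ for $\ell\ge1$ then translate into $A_\ell\mathbf c=\mathbf 0$ by the same uniqueness-of-standard-representation argument applied to $x_0(x_\ell g)$: the divisibility of $B_i$ by $x_0$ for $i\in\mathcal H_0$ is equivalent to $(A_\ell\mathbf c)_i=0$.
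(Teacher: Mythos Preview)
Your proof is correct and follows the same overall architecture as the paper's (analyze $x_0g$ first to obtain the shape $g=\sum_ic_ih'_{\alpha_i}/x_0$ together with $\sum_ic_ih''_{\alpha_i}=0$, then translate $x_\ell g\in I$ for $\ell\ge1$ into the matrix conditions $A_\ell\mathbf c=\mathbf 0$). The one genuine point of divergence is how you handle the coefficients $B_h$ with $h\notin\mathcal H_0$ in the inclusion $(\supseteq)$.

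The paper never faces your ``main technical obstacle'' because it works with $x_k\sum_ic_ih'_{\alpha_i}$ rather than with $x_k\sum_ic_ih_{\alpha_i}$. Since every term of $h'_{\alpha_i}$ is divisible by $x_0$, every term produced during the rewriting of $x_k\sum_ic_ih'_{\alpha_i}$ remains divisible by $x_0$; in particular, any contribution to the coefficient of some $h\notin\mathcal H_0$ is automatically a multiple of $x_0$ ``by construction'', so the $\mathcal H\setminus\mathcal H_0$ components of $\mathrm{Sr}(x_k\sum_ic_ih'_{\alpha_i})_0$ vanish for free. Additivity of standard representations together with $\sum_ic_ih''_{\alpha_i}=0$ then identifies the whole condition with $A_k\mathbf c=\mathbf 0$.

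Your route---reducing $\sum_h B_h h=x_0(x_\ell g)$ modulo $x_0$, killing the $\mathcal H_0$ part via $A_\ell\mathbf c=\mathbf 0$, and then invoking independence of the multiplicative multiples of the marked \emph{set} $\{\overline h\}_{h\notin\mathcal H_0}$ over the quasi-stable ideal $(J+(x_0))/(x_0)$---is valid, but you should cite the uniqueness of the standard representation for marked sets (e.g.\ \cite[Proposition~4.11]{Quot}) rather than gesture at ``a marked-basis argument parallel to \cite[Theorem~5.4]{Seiler2012}'': the set $\{\overline h\}_{h\notin\mathcal H_0}$ is only a marked set, not necessarily a marked basis, of the quotient ideal. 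Also note that, apart from a notational clash (you use $h_{\alpha_i}$ both for elements of $\mathcal H_0$ and for all of $\mathcal H$), the hypothesis $\sum_jc_jh''_{\alpha_j}=0$ enters only through $x_0g=\sum_jc_jh_{\alpha_j}$, not in the mod-$x_0$ step itself. With these clarifications your argument is complete; the paper's version is simply shorter because it bypasses the passage to $\kk[x_1,\dots,x_n]$ via the elementary observation on $x_0$-divisibility.
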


\begin{proof}
If $f$ is a polynomial in $(I:{\mathfrak m})$, then in particular $x_0 f$ belongs to $I$ and we can represent $x_0 f$ by the rewriting procedure with $\mathcal H$:
$$x_0f= \sum_{h\in \mathcal H} P_h h.$$
Recalling that the terms of $x_0 f$ are all divisible by $x_0$, observe that $x_0 \tau$ belongs to $J$ if and only if there exists $h\in \mathcal H$ such that $x_0\tau=x_0^\ell x^\delta {\mathrm Ht}(h)$, where $x^\delta$ is not divisible by $x_0$ and $\max(x^\delta)\leq \min(\mathrm{Ht}(h))$. Two cases can now occur:
\begin{itemize}
\item[(a)] $\ell >0$, and hence at least one term in $P_h$ is divisible by $x_0$
\item[(b)] $\ell =0$, and hence $\mathrm{Ht}(h)$ is divisible by $x_0$; thus, $x^\delta=1$ and $x_0 \tau =\Ht(h)$.
\end{itemize}
In case (a) every new term that is introduced by the rewriting procedure is divisible by $x_0$. In case (b) every new term that is introduced by the rewriting procedure belongs to the sous-escalier of $J$ and so it is not rewritable.

Hence we can write:
$$x_0 f = \sum_{x_0 \vert \bar P_h} \bar P_h h + \sum_{\bar P_{\alpha_i}=c_i \in \mathbb K} \bar P_{\alpha_i} h_{\alpha_i}= \sum_{x_0 \vert \bar P_h} \bar P_h h + \sum_{\bar P_{\alpha_i}=c_i \in \mathbb K} \bar P_{\alpha_i} h'_{\alpha_i} + \sum_{\bar P_{\alpha_i}=c_i \in \mathbb K} \bar P_{\alpha_i} h''_{\alpha_i},$$
and this is possible if and only if $ \sum c_i h''_{\alpha_i}=0$.

As a consequence, we obtain
$$f=\sum_{x_0 \vert \bar P_h} \frac{\bar P_h}{x_0} h + \sum c_i \frac{h'_{\alpha_i}}{x_0}.$$
and for every $k>0$ we now have:
$$x_k f= \sum_{x_0 \vert \bar P_h} \frac{\bar P_h}{x_0} x_k h + x_k \sum c_i \frac{h'_{\alpha_i}}{x_0}.$$
Hence, $x_k f$ belongs to $I$ if and only if  
$x_k \sum c_i \frac{h'_{\alpha_i}}{x_0}$ belongs to $I$. This is equivalent to having the standard representation
$$x_k \sum c_i \frac{h'_{\alpha_i}}{x_0} = \sum_{h\in \mathcal H} Q_h h$$
and hence
$$x_k \sum c_i h'_{\alpha_i} = \sum_{h\in \mathcal H} x_0 Q_h h$$
which is a standard representation too, as the variable $x_0$ is multiplicative for every term (see \eqref{eq:standard representation}). This implies that the components of the standard representation of $x_k \sum c_i h'_{\alpha_i}$ are free of constant terms, for which we use the following notation:
\begin{equation}\label{eq:DIV}
\Sr(x_k \sum c_i h'_{\alpha_i})_0
=\mathbf{0},
\end{equation}
where $\Sr(x_k \sum c_i h'_{\alpha_i})$ is the vector of the coefficient polynomials of the standard representation of $x_k \sum c_i h'_{\alpha_i}$, as already set in Section \ref{sec:preliminaries}.

Recall that $\sum c_i h''_{\alpha_i}=0$. Using the additivity of standard representations and~\eqref{eq:DIV} we now deduce
\begin{equation}\label{eq:UseSrAdditivity}
\begin{split}
    \mathbf{0}=\Sr(x_k \sum c_i h'_{\alpha_i})_0+\Sr(0)_0=\Sr(x_k \sum c_i h'_{\alpha_i}+x_k \sum c_i h''_{\alpha_i})_0 \\
    =\Sr(x_k \sum c_i h_{\alpha_i})_0.
    \end{split}
\end{equation}
Observe that also the opposite implication holds true, because every term in $h'_{\alpha_i}$ is divisible by $x_0$. Moreover, \eqref{eq:UseSrAdditivity} is equivalent to the conditions $ A_k {\bf c}=0$, for every $k\in \{1,\dots,n\}$, because the possible non-null coefficients of the polynomials $h\in \mathcal H\setminus \mathcal H_0$ must be divisible by $x_0$ by construction.
\end{proof}

Let $\Sigma_{\mathcal H}$ denote the system of homogeneous linear equations given by the conditions describing the socle $(I:{\mathfrak m})/I$ of $I$ in Theorem \ref{th:socle}, in the $r$ variables $c_1,\dots,c_r$. Then, we obtain the following.

\begin{corollary}\label{cor:condition Gore}
Let $J$ be an Artinian monomial ideal.
\begin{itemize} 
\item[(i)] An ideal $I$ generated by a $P_J$-marked basis $\mathcal H$ is Gorenstein if and only if the associated matrix of coefficients of $\Sigma_{\mathcal H}$ has rank $r-1$.
\item[(ii)] The arithmetically Gorenstein locus in $\MFScheme{J}$ is an open subset ${\mathbf G}_J$ of~$\MFScheme{J}$.
\end{itemize}
\end{corollary}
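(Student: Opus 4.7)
The plan is to deduce (i) directly from Theorem~\ref{th:socle} by a short linear-algebra argument, and then derive (ii) as a consequence of the upper semicontinuity of matrix corank.

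For (i), I would interpret Theorem~\ref{th:socle} as providing a $\mathbb{K}$-linear map $\phi\colon V \to (I:\mathfrak{m})/I$, where $V\subseteq\mathbb{K}^r$ is the solution space of the system $\Sigma_{\mathcal H}$, sending $\mathbf{c}=(c_1,\dots,c_r)$ to the class of $\sum_{i=1}^r c_i h'_{\alpha_i}/x_0$. Surjectivity of $\phi$ is exactly the content of the direct-sum decomposition in Theorem~\ref{th:socle}. Injectivity reduces, via $R=I\oplus\langle\mathcal N(J)\rangle_{\mathbb K}$, to checking that the polynomials $h'_{\alpha_i}/x_0$ are $\mathbb K$-linearly independent elements of $\langle\mathcal N(J)\rangle_{\mathbb K}$. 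For this I would verify two facts: (a) every term of $h'_{\alpha_i}/x_0$ lies in $\mathcal N(J)$, and (b) no $\mathrm{Ht}(h_{\alpha_j})/x_0$ appears in $h'_{\alpha_i}/x_0$ for $i\ne j$. Point (a) holds because the non-head terms of $h_{\alpha_i}$ already belong to $\mathcal N(J)$, hence so do their quotients by $x_0$, and $\mathrm{Ht}(h_{\alpha_i})/x_0\notin J$ because otherwise its Pommaret factorization would induce a non-trivial Pommaret factorization of $\mathrm{Ht}(h_{\alpha_i})\in\mathcal P_J$, contradicting the uniqueness property of Pommaret bases. Point (b) holds because $\mathrm{Ht}(h_{\alpha_j})\in J$ cannot occur as a non-head term of any $h_{\alpha_i}$. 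Combining these, the coefficient of $\mathrm{Ht}(h_{\alpha_j})/x_0$ in $\phi(\mathbf c)$ equals $c_j$, so $\phi$ is injective. Thus $\dim_{\mathbb K}(I:\mathfrak m)/I = \dim_{\mathbb K}V = r-\mathrm{rank}(M_{\Sigma_{\mathcal H}})$, where $M_{\Sigma_{\mathcal H}}$ is the coefficient matrix of $\Sigma_{\mathcal H}$. Since $J$ is Artinian, $R/I$ is Artinian, and the Gorenstein property is equivalent to a one-dimensional socle, i.e.\ $\mathrm{rank}(M_{\Sigma_{\mathcal H}})=r-1$.

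For (ii), the entries of $M_{\Sigma_{\mathcal H}}$ are, by construction, constant terms of coefficient polynomials appearing in standard representations of $x_k h_{\alpha_j}$ computed in $R_{\mathbb K[C]/\mathscr U}$, together with coefficients extracted from the linear identity $\sum c_i h''_{\alpha_i}=0$; all of these are regular functions on $\MFScheme{J}$. The rank inequality $\mathrm{rank}(M_{\Sigma_{\mathcal H}})\ge r-1$ is the non-vanishing of at least one $(r-1)\times(r-1)$ minor and therefore cuts out an open subscheme $\mathbf{G}_J\subseteq\MFScheme{J}$. Since every Artinian $\mathbb K$-algebra has a non-zero socle, the reverse inequality $\mathrm{rank}(M_{\Sigma_{\mathcal H}})\le r-1$ holds at every closed point of $\MFScheme{J}$; hence the Gorenstein locus coincides with $\mathbf{G}_J$ and is open.

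The main technical obstacle I anticipate is the injectivity argument for $\phi$, which depends on the delicate combinatorics of Pommaret bases (in particular the fact that $\mathrm{Ht}(h_{\alpha_i})/x_0\in\mathcal N(J)$ for every $h_{\alpha_i}\in\mathcal H_0$); once this is established, both assertions follow by routine linear algebra and the standard semicontinuity of rank for matrices with regular-function entries.
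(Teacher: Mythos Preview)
Your approach is correct and follows essentially the same route as the paper: both proofs read off from Theorem~\ref{th:socle} that the socle dimension equals the dimension of the solution space of $\Sigma_{\mathcal H}$, then use that Gorenstein means one-dimensional socle for~(i), and that the non-vanishing of some $(r-1)$-minor is an open condition (together with the fact that an Artinian quotient always has nontrivial socle, so the rank never exceeds $r-1$) for~(ii).

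The one difference is that you supply explicitly the injectivity of $\phi\colon V\to (I:\mathfrak m)/I$, which the paper's two-line proof of~(i) takes for granted when it identifies ``$\dim$ socle $=1$'' with ``$\dim V=1$''. Your argument for injectivity is sound: each $h'_{\alpha_i}/x_0$ carries the distinguished term $x^{\alpha_i}/x_0\in\mathcal N(J)$ (by the disjointness of Pommaret cones), and for $i\ne j$ the term $x^{\alpha_j}/x_0$ cannot appear in $h'_{\alpha_i}/x_0$ since $x^{\alpha_j}\in J$ is excluded from the tail of $h_{\alpha_i}$. So what you flag as ``the main technical obstacle'' is in fact a small combinatorial check that the paper leaves implicit; once noted, the rest is exactly as in the paper.
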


\begin{proof}
Recall that the ideal $I$ is Gorenstein if and only if the $\mathbb K$-dimension of the socle of $I$ is equal to $1$, that is the vector space of solutions of the linear system $\Sigma$ has dimension $1$, thanks to  Theorem \ref{th:socle}. Being $r$ the number of variables in $\Sigma$, we obtain item (i).

Now, consider the $\mathcal P_J$-marked basis $\mathscr H\subseteq R_{\mathbb K[C]}$ as described in Section \ref{sec:marked functors}, modulo the ideal $\mathscr U$ defining the marked scheme $\MFScheme{J}$. By item (i),  Gorenstein ideals in $\MFScheme{J}$ are obtained if and only if at least a minor of order $r-1$ of the associated matrix of coefficients of the linear system $\Sigma_{\mathscr H}$ does not vanish. We conclude recalling that the socle of a proper homogeneous ideal $H$ is not null, because it contains the part of degree $s-1$ of the quotient $R/H$, when $s$ is its regularity.
\end{proof}

\begin{remark}\label{rem: Gore comp cost}
Since the number $r$ of variables of a linear system $\Sigma_{\mathcal H}$ is bounded from above by the cardinality of the sous-escalier $\mathcal N(J)$, the linear system $\Sigma$ has a number of equations of order $\mathcal O(\vert\mathcal N(J)\vert \cdot n)$ in $r\leq \vert\mathcal N(J)\vert$ variables.
\end{remark}

\begin{corollary}\label{cor:open Gore}
The arithmetically Gorenstein locus in a Hilbert scheme with a non-constant Hilbert polynomial is an open subset. 
\end{corollary}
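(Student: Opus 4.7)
The plan is to combine Corollary \ref{cor:CM} with Corollary \ref{cor:condition Gore}. Every arithmetically Gorenstein scheme is arithmetically Cohen-Macaulay, so the arithmetically Gorenstein locus is contained in the arithmetically Cohen-Macaulay locus, which by Corollary \ref{cor:CM} is the union of the open subschemes $\MFScheme{J}$, with $J$ a CM quasi-stable ideal, together with their images under linear changes of variables. Since openness in the Hilbert scheme is preserved by the natural $\PGL{n+1}$-action, it suffices to prove that, for each CM quasi-stable $J$, the arithmetically Gorenstein locus inside $\MFScheme{J}$ is open.

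Fix such a $J$ and let $d=\dim(R/J)\geq 1$. By Proposition \ref{prop:Artinian reduction}, the specialisation $x_0=\cdots=x_{d-1}=0$ applied to the universal $\mathcal P_J$-marked basis $\mathscr H\subset R_{\kk[C]/\mathscr U}$ described in Section \ref{sec:marked functors} yields a universal marked basis over the Artinian quasi-stable ideal $\bar J := (J+(x_0,\dots,x_{d-1}))/(x_0,\dots,x_{d-1})$ in the polynomial ring $\kk[x_d,\dots,x_n]$. This induces a morphism of affine schemes $\pi : \MFScheme{J} \to \MFScheme{\bar J}$ sending each $I$ to its Artinian reduction. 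Since a Cohen-Macaulay ideal is Gorenstein if and only if its Artinian reduction is Gorenstein (as recalled at the beginning of Section \ref{sec:Gore}), the Gorenstein locus in $\MFScheme{J}$ is precisely $\pi^{-1}(\mathbf{G}_{\bar J})$, where $\mathbf{G}_{\bar J}$ is open in $\MFScheme{\bar J}$ by Corollary \ref{cor:condition Gore}(ii).

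Since $\pi$ is a morphism of schemes, $\pi^{-1}(\mathbf{G}_{\bar J})$ is open in $\MFScheme{J}$, which in turn is open in the Hilbert scheme by Proposition \ref{prop:sottoschema} (using $\rho_J=1$ for $J$ CM, as noted in the proof of Corollary \ref{cor:CM}). Taking the union over all CM quasi-stable $J$ and over all $g\in \PGL{n+1}$ then exhibits the arithmetically Gorenstein locus as a union of open subschemes of the Hilbert scheme. The one step that is not purely formal is checking that $\pi$ is a well-defined morphism at the level of functors, and not merely on $\kk$-points: concretely, one needs Proposition \ref{prop:Artinian reduction}(ii) to persist for families over arbitrary Noetherian $\kk$-algebras, which should follow from the functoriality of marked bases recalled in Section \ref{sec:marked functors} together with flatness of the Artinian reduction under the regularity of the sequence $x_0,\dots,x_{d-1}$. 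Once this is granted, the rank condition in Corollary \ref{cor:condition Gore}(i) applied to the specialised universal basis provides explicit equations and inequalities in $\kk[C]/\mathscr U$ that cut out the Gorenstein locus as a basic open, which is the explicit description referred to in the Introduction.
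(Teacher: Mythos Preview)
Your proposal is correct and follows essentially the same approach as the paper's proof: both reduce to CM quasi-stable $J$ via Corollary~\ref{cor:CM}, pass to the Artinian reduction using Proposition~\ref{prop:Artinian reduction}, invoke Corollary~\ref{cor:condition Gore} to get an open condition, and then use $\rho_J=1$ with Proposition~\ref{prop:sottoschema} before taking the union over all $J$ and linear changes of variables. The only cosmetic difference is that you package the passage to the Artinian reduction as a morphism $\pi:\MFScheme{J}\to\MFScheme{\bar J}$ and pull back $\mathbf{G}_{\bar J}$, whereas the paper argues directly that the rank conditions from Corollary~\ref{cor:condition Gore}(i) are polynomial conditions in a subset of the parameters $C$ (the ``common coefficients'' of $\mathscr H$ and its specialisation $\mathscr H'$), and thus cut out an open subset $\mathbf{G}_J\subset\MFScheme{J}$ without needing to verify that $\pi$ is a morphism of functors; this sidesteps the technical point you flag in your last paragraph.
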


\begin{proof}
Recall that every homogeneous ideal can be transformed into an ideal with a marked basis over a quasi-stable ideal by a certain linear change of variables, like suggested in \cite{HSS} and already done in the proof of Corollary \ref{cor:CM}. Hence, let $I$ be the homogeneous ideal generated by the marked basis $\mathscr H\subset R_{K[C]}$ over a Cohen-Macaulay quasi stable ideal $J$, modulo the ideal $\mathscr U$ defining the marked scheme $\MFScheme{J}$ (see Section \ref{sec:marked functors}). 
Being the Hilbert polynomial of $J$ non-constant by hypothesis, then $\rho_J=1$ and $\MFScheme{J}$ is an open subscheme in the corresponding Hilbert scheme, thanks to Proposition~\ref{prop:sottoschema}. 

In this particular situation, setting to zero the variables $x_0,\dots,x_{d-1}$ in the polynomials of the marked basis $\mathscr H$, we obtain the marked basis $\mathscr H'$ of an Artinian reduction $I'$ of $I$ over the Artinian reduction $J'$ of $J$. Recalling that an ideal is Gorenstein if and only if its Artinian reduction is Gorenstein, we now consider on the polynomials of $\mathscr H$ the conditions on the common coefficients of the polynomials of $\mathscr H'$ that define the open subset ${\mathbf G}_{J'}$ of $\MFScheme{J'}$ of Corollary \ref{cor:condition Gore}, obtaining an open subset ${\mathbf G}_{J}$ of $\MFScheme{J}$.

Hence, the arithmetically Gorenstein locus in Hilbert schemes with non-constant Hilbert polynomials coincides with the union of the open subsets ${\mathbf G}_{J}$ of $\MFScheme{J}$, with $J$ CM quasi-stable ideal, and of their images by linear changes of variables. 
\end{proof}

\begin{example}\label{ex:secGor}
Let us consider $J=(x^2,y^2)\subset \mathbb K[x,y]$ with $x<y$. The Pommaret basis of $J$ is $\mathcal P_{J}=\{x^2,y^2,x^2y\}$. For every constant value of the parameters $d_{1,1}, d_{2,2}$ the following polynomials form a $\mathcal P_{J}$-marked basis:
$$h_1=x^2+d_{1,1}xy, \quad h_2=y^2+d_{2,1}xy, \quad h_3=x^2y.$$
We have $\mathcal H_0=\{h_1,h_3\}$ and $h''_1=h''_3=0$. By the rewriting procedure we find
$yh_1=(1-d_{11}d_{2,1})h_3 + d_{1,1}xh_2$ and 
$yh_3=x^2h_2-d_{2,1}xh_3$,
so the matrix $A_1$ is 
$$\left(\begin{array}{cc}
0&0\\
1-d_{1,1}d_{2,1} & 0 \end{array}\right).$$
The system $\Sigma$ in the two variables $c_1,c_2$  is only made by the equation $(1-d_{1,1}d_{2,1})c_1=0$ and its associated matrix has rank $1$ if and only if $1-d_{1,1}d_{2,1}\not=0$. Under this condition, for every ideal $I=(h_1,h_2,h_3)$, the socle $(0:m)\subset R/I$ is generated by $[xy]\in R/I$. 
For example, the ideal $I=(x^2+xy,y^2-xy,x^2y)$ is Gorenstein.
\end{example}

\begin{example}\label{ex:Gore 2}
The quasi-stable ideal $J=(x_2^2,x_1^2,x_0^2)$ is Artinian in $\mathbb K[x_0,x_1,x_2]$ ($x_0<x_1<x_2$) and its Pommaret basis is $\mathcal P_{J}=\{x_2^2,x_1^2,x_0^2,x_0^2x_1,x_0^2x_2,x_1^2x_2,x_0^2x_1x_2\}$. The following polynomials form the $\mathcal P_J$-marked set $\mathcal H$:
\begin{equation}\label{eq:exNoGore2MS}
\begin{gathered}
h_1={x_{{0}}}^{2}+d_{{1,1}}x_{{0}}x_{{1}}+d_{{1,2}}x_{{0}}x_{{2}}+d_{{1,3}}x_{{1}}x_{{2}},\\
h_2={x_{{1}}}^{2}+d_{{2,1}}x_{{0}}x_{{1}}+d_{{2,2}}x_{{0}}x_{{2}}+d_{{2,3}}x_{{1}}x_{{2}},\\
h_3={x_{{2}}}^{2}+d_{{3,1}}x_{{0}}x_{{1}}+d_{{3,2}}x_{{0}}x_{{2}}+d_{{3,3}}x_{{1}}x_{{2}},\quad
h_4={x_{{0}}}^{2}x_{{1}}+d_{{4,1}}x_{{0}}x_{{1}}x_{{2}},\\
h_5={x_{{0}}}^{2}x_{{2}}+d_{{5,1}}x_{{0}}x_{{1}}x_{{2}},\quad
h_6={x_{{1}}}^{2}x_{{2}}+d_{{6,1}}x_{{0}}x_{{1}}x_{{2}},\quad
h_7={x_{{0}}}^{2}x_{{1}}x_{{2}}.
\end{gathered}
\end{equation}
The marked set $\mathcal H$ is a $\mathcal P_J$-marked basis if and only if the coefficients $d_{i,j}$ satisfy the equations listed in \eqref{eq:exGore2MF}.

We have $\mathcal H_0=\{h_1,h_4,h_5,h_7\}$ and $h_1''=d_{1,3}x_1x_2$, while $h''_4=h''_5=h''_7=0$. Hence,  we need to set $c_1(d_{1,3}x_1x_2)=0$ in order to compute the socle, so that $d_{1,3} c_1=0$ is one of the equations of the system $\Sigma_{\mathcal H}$. There are $6$ more non-null equations in the system $\Sigma_{\mathcal H}$, obtained from the matrices $A_0$, $A_1$, $A_2$ as in Theorem~\ref{th:socle}. The complete coefficient matrix of $\Sigma_{\mathcal H}$ can be seen at \eqref{eq:exNoGore2Matrix}. 
The complete list of equations describing the non-Gorenstein locus in the marked scheme defined by $J$ can be found at \eqref{eq:exGore2NoGore}. They are the  minors of order 3 of the matrix of the system $\Sigma_{\mathcal H}$. Indeed, these equations describe the loci were the socle of the ideal $(\mathcal H)$ has not dimension~$1$, as stated in Corollary \ref{cor:condition Gore}.
\end{example}

\section{Complete intersection conditions by marked bases}
\label{sec: CI conditions}

Referring to \cite{Kunz}, recall that a proper ideal $I$ in a Noetherian ring is called a {\em complete intersection} if the length of the shortest system of minimal generators of $I$ is equal to the height (or codimension) of $I$. A proper ideal $I$ that is generated by a regular sequence in a Noetherian ring is a complete intersection and the converse holds if the ring is Cohen–Macaulay, like a polynomial ring over a field.

A closed projective scheme defined by a homogeneous polynomial ideal $I$ is a \emph{(strict or global) complete intersection} if and only if $I^{sat}$ is a complete intersection (see \cite[Exercise 8.4, chapter II]{Hart-book}). 

The {\em strict complete intersection locus} in a Hilbert scheme is the subset of points corresponding to strict complete intersection schemes.

In this section we describe and use a method to identify Artinian complete intersection ideals among the ideals generated by marked bases using minimization in terms of linear algebra only. 

The strategy that we propose here is inspired by the minimization method of Gr\"obner bases that has been described in \cite[Section 3]{C1999} based on an interpretation of \cite[Lemma 13.1]{MMM} in terms of linear algebra only. The setting is that of zero-dimensional schemes which well fits with our problem. Indeed, we can consider Artinian ideals only, because complete intersections are preserved by general linear sections, as well.

Given an Artinian monomial ideal $J$, the main tool is the notion of border $\partial\mathcal O$ of the order ideal $\mathcal N(J)$, from which the concept of $\partial\mathcal O$-marked basis arises (see \cite{MMM,Mourrain1999} where a $\partial\mathcal O$-marked basis is called a {\em border basis}). In~\cite{BC2022} a comparison between a $\partial\mathcal O$-marked basis and a $\mathcal P_J$-marked basis is described. Here we develop our strategy referring to \cite{BC2022} for definitions and features related to $\partial\mathcal O$-marked bases and their relations with $\mathcal P_J$-marked bases, although in \cite{BC2022} both $\mathcal P_J$-marked bases and $\partial\mathcal O$-marked bases are not necessarily homogeneous.

Here we deal with homogeneous $\mathcal P_J$-marked bases, implying that also the corresponding $\partial\mathcal O$-marked bases must be homogeneous, thanks to \cite[Theorem~1]{BC2022}.

\begin{definition}\label{def:border}
Let $J$ be an Artinian ideal. The {\em border} of $J$ is 
$$\partial \mathcal O:=\{x_ix^\tau : x^\tau \in  \mathcal N(J) \text{ and }  i\in \{0,\dots,n\} \}\cap J.$$
\end{definition}

If $\mathcal H:=\{h_\alpha\}_{\alpha}$ is a $\mathcal P_J$-marked basis generating an ideal $I$, the following set of homogeneous marked polynomials
$$\mathcal B:=\{b_\tau:=x^\tau - \mathrm{Nf}_I(x^\tau) : x^\tau \in \partial \mathcal O \}$$
is the {\em (homogeneous) $\partial \mathcal O$-marked basis of $I$} (see \cite[Theorem~1]{BC2022}), being $\mathrm{Nf}_I(x^\tau)$ the normal form of $x^\tau$ by $I$ as defined in \eqref{eq:standard representation}, and $x^\tau$ the head term of the polynomial~$b_\tau$.
Indeed, the additional condition of homogeneity that we consider here does not forbid the application of \cite[Theorem~1]{BC2022}. 

Analogously to \eqref{eq:multipli}, for every $\partial\mathcal O$-marked basis $\mathcal B$ and for every integer $t$ we set
\begin{equation}\label{eq:multiples border}
\mathcal B^{(t)}:=\{x^\delta b_\tau : b_\tau \in \mathcal B, x^\delta=1 \text{ or } \max(x^\delta)\leq \min(x^\tau), \deg(x^\delta x^\tau)=t\}.
\end{equation}
If $x^\delta b_\tau$ belongs to $\mathcal B^{(t)}$ we say that $x^\delta x^\tau$ is its head term.
Observe that $\mathcal H^{(t)}$ is contained in $\mathcal B^{(t)}$ for every integer $t$. We highlight that two polynomials belonging to $\mathcal B^{(t)}$ can have the same head term, differently from $\mathcal H^{(t)}$.

\begin{definition}\label{def:riscrittura border}
Given a $\partial\mathcal O$-marked basis $\mathcal B=\{b_\tau\}_{\tau \in \partial\mathcal O}$, for every integer $t$ we denote by $\rid{\mathcal B^{(t)}}$ the transitive closure of the relation $f \rid{\mathcal B^{(t)}} f-\lambda x^\delta b_\tau$, where $f$ is a polynomial, $x^\delta x^\tau$ is a term that appears in $f$ with coefficient $\lambda$ and $x^\delta b_{\tau} \in \mathcal B^{(t)}$.
We will write $f \crid{\mathcal B^{(t)}} g$ if $f \rid{\mathcal B^{(t)}} g$ and $g \in \langle \mathcal N(J) \rangle_A$. 
\end{definition}

\begin{lemma}\label{lemma: multiples border}
If $x^\tau$ is a term that belongs to $\partial\mathcal O\setminus \mathcal P_J$, then it is a multiple of another term in the border by a multiplicative variable.
\end{lemma}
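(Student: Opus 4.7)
The plan is to decompose $x^\tau$ using the Pommaret basis of $J$ and strip off one multiplicative variable, then verify that the quotient still lies in the border. Since $x^\tau\in J\setminus\mathcal P_J$, the defining property of the Pommaret basis recalled in Section~\ref{sec:preliminaries} gives a unique writing $x^\tau=x^\delta x^\sigma$ with $x^\sigma\in\mathcal P_J$, $x^\delta\neq 1$, and $\max(x^\delta)\leq\min(x^\sigma)$. I would then set $x_j:=\min(x^\delta)$ and $x^{\tau'}:=x^\tau/x_j$, and claim that this $x_j$ and $x^{\tau'}$ witness the lemma.

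First, I would check that $x_j$ is multiplicative for $x^{\tau'}$. Removing the smallest variable from $x^\delta$ cannot create a smaller one, so $\min(x^\delta/x_j)\geq x_j$ whenever $x^\delta/x_j\neq 1$; combined with $\min(x^\sigma)\geq\max(x^\delta)\geq x_j$, this gives $\min(x^{\tau'})\geq x_j$, i.e.\ $x_j\leq\min(x^{\tau'})$. Moreover, $x^\sigma$ still divides $x^{\tau'}$, so $x^{\tau'}\in J$.

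The remaining step is to exhibit a variable $x_k$ with $x^{\tau'}/x_k\in\mathcal N(J)$, which is the content of $x^{\tau'}\in\partial\mathcal O$. Since $x^\tau\in\partial\mathcal O$, there exists $x_i$ with $x^\eta:=x^\tau/x_i\in\mathcal N(J)$. If $i\neq j$, or if $x_j$ has multiplicity at least two in $x^\tau$, then $x_ix_j$ divides $x^\tau$, so $x^{\tau'}/x_i=x^\eta/x_j$ is a divisor of an element of the sous-escalier and therefore lies in $\mathcal N(J)$, proving $x^{\tau'}\in\partial\mathcal O$. The leftover case $i=j$ with multiplicity one would force $x^{\tau'}=x^\eta\in\mathcal N(J)$, contradicting $x^{\tau'}\in J$, so it cannot occur.

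The only place where care is needed is this last case distinction together with the verification of multiplicativity; neither is deep. The choice $x_j:=\min(x^\delta)$ is precisely what makes the multiplicativity of $x_j$ for $x^{\tau'}$ automatic, and the assumption $x^\tau\notin\mathcal P_J$ is used exactly once, to ensure $x^\delta\neq 1$ so that such an $x_j$ exists.
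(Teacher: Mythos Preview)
Your proof is correct and follows essentially the same approach as the paper: both strip off $\min(x^\tau)$ and verify that the quotient lies in the border. The paper names this variable directly as $x_k:=\min(x^\tau)$ and uses the border decomposition $x^\tau=x_u m$ with $m\in\mathcal N(J)$ to observe in one line that $x_u\neq x_k$ (your ``leftover case''), whence $x^\tau/x_k=x_u(m/x_k)\in\partial\mathcal O$; you reach the same variable via the Pommaret decomposition (your $x_j=\min(x^\delta)$ equals $\min(x^\tau)$) and spell out the case distinction explicitly, but the argument is the same.
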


\begin{proof}
Let $x^\tau=x_u m$, with $m\in \mathcal N(J)$, and let $x_k:=\min(x^\tau)$. Then $x^\tau/x_k$ belongs to $J$, otherwise $x^\tau\in \mathcal P_J$. So $x_u\not=x_k$ and $x_u \frac{m}{x_k}$ belongs to $\partial\mathcal O$, because $\frac{m}{x_k}$ belongs to $\mathcal N(J)$.
\end{proof}

\begin{lemma}\label{lemma:lex}
Let $x^\delta m$ be a term that belongs to $J\setminus \partial\mathcal O$ with $m\in \mathcal N(J)$. Then, for every term $x^\gamma \in \partial\mathcal O$ and term $x^\eta$ such that $x^\delta m=x^\eta x^\gamma$ and $\max(x^\eta)\leq \min(x^\gamma)$, $x^\eta <_{\mathrm{lex}}~x^\delta$.
\end{lemma}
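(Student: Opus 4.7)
The plan is to compare $x^\eta$ and $x^\delta$ variable by variable, exploiting the constraint $\max(x^\eta) \leq \min(x^\gamma)$. Write $x_k := \min(x^\gamma)$ and denote the exponent of $x_i$ in a monomial by the corresponding subscript. The identity $x^\eta x^\gamma = x^\delta m$ then yields three regimes: for $i > k$ one has $\eta_i = 0$ and $\gamma_i = \delta_i + m_i$; at $i = k$ one has $\eta_k + \gamma_k = \delta_k + m_k$ with $\gamma_k \geq 1$ because $x_k \mid x^\gamma$; and for $i < k$ one has $\gamma_i = 0$ and $\eta_i = \delta_i + m_i$. With the ordering $x_0 < \cdots < x_n$ (so that $x_n$ is the most significant in the lex order), the comparison $x^\eta <_{\mathrm{lex}} x^\delta$ is decided at the largest index where the two exponent vectors disagree.

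I would then split into two cases. In the easy case, $x^\delta$ uses some variable strictly larger than $x_k$: letting $j$ be the largest index with $j > k$ and $\delta_j > 0$, the equations above give $\eta_i = 0 = \delta_i$ for every $i > j$, while $\eta_j = 0 < \delta_j$, so $j$ is the largest differing index and $x^\eta <_{\mathrm{lex}} x^\delta$ follows immediately. In the remaining case, $\delta_i = 0$ for all $i > k$, which forces $\eta_i = 0 = \delta_i$ in that range, and the problem reduces to showing $\eta_k < \delta_k$, equivalently $m_k < \gamma_k$.

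The only substantive step, and the main obstacle, is proving this residual inequality in the second case, which I plan to handle by contradiction. Assume $m_k \geq \gamma_k$. Combining this with $m_i = \gamma_i$ for $i > k$ (from the case assumption $\delta_i = 0$) and with $\gamma_i = 0$ for $i < k$ (from $\min(x^\gamma) = x_k$) gives $\gamma_i \leq m_i$ for every $i$, that is, $x^\gamma$ divides $m$. Since $x^\gamma \in \partial\mathcal O \subseteq J$ and $J$ is a monomial ideal, this forces $m \in J$, contradicting $m \in \NN(J)$. The rest is exponent bookkeeping; note that the hypothesis $x^\delta m \notin \partial\mathcal O$ is not actually invoked, so the conclusion in fact holds for every $x^\delta m \in J$ with $m \in \NN(J)$.
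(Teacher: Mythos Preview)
Your proof is correct and takes a genuinely different route from the paper's. The paper argues by reduction to \cite[Lemma~3.4(vi)]{Quot}: if $x^\gamma\in\mathcal P_J$ that lemma applies directly, and otherwise one repeatedly invokes Lemma~\ref{lemma: multiples border} to write $x^\gamma=x_kx^\sigma$ with $x_k=\min(x^\gamma)$ and $x^\sigma\in\partial\mathcal O$, iterating until a Pommaret generator is reached and then applying the cited lemma. Your argument, by contrast, is a direct exponent comparison that uses nothing beyond $x^\gamma\in J$ and $m\in\mathcal N(J)$ (hence $x^\gamma\nmid m$), making it fully self-contained and avoiding both the external reference and the inductive peeling via Lemma~\ref{lemma: multiples border}. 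As you observe, your method also reveals that the assumption $x^\delta m\notin\partial\mathcal O$ is not needed. The paper's approach, on the other hand, situates the lemma within the existing Pommaret-basis machinery of \cite{Quot}, which may be conceptually convenient when one is already relying on that framework.
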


\begin{proof} 
With the same notation of the statement, if $x^\gamma$ belongs to $\mathcal P_J$, then we refer to \cite[Lemma 3.4(vi)]{Quot}.

Otherwise, there exist $m'\in \mathcal N(J)$ and a variable   $x_\ell$ such that $x^\gamma=x_\ell m'$, where $x_\ell>\min(x^\gamma)$ because $x^\gamma$ does not belong to $\mathcal P_J$. By Lemma \ref{lemma: multiples border}, $x^\gamma=x_k x^\sigma$, where $x^\sigma \in \partial\mathcal O$ and $x_k:=\min(x^\gamma)$. If $x^\sigma\in \mathcal P_J$ then $x^\eta x_k <_{\mathrm{lex}} x^\delta$ thanks to \cite[Lemma~3.4(vi)]{Quot} and hence $x^\eta <_{\mathrm{lex}}~x^\delta$, because $x_k=\min(x^\gamma)$. 
Otherwise we repeat the same argument on $x^{\sigma}$ until we find a term $x^{\sigma'}$ belonging to $\mathcal P_J$ such that $x^\gamma=x^\epsilon x^{\sigma'}$ with $\max(x^\eta x^\epsilon)\leq \min(x^{\sigma'})$ and we conclude as before because $x^\eta x^\epsilon <_{\mathrm{lex}}~x^{\sigma'}$ by \cite[Lemma~3.4(vi)]{Quot} and $\max(x^\eta)\leq \min(x^\epsilon)$ by construction. 
\end{proof}

\begin{proposition}\label{prop:NoethConfl}
The relation $\rid{\mathcal B^{(t)}}$ is Noetherian and confluent.
\end{proposition}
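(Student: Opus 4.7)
The plan is to establish Noetherianity first and then deduce confluence from the uniqueness of the projection onto $\langle \mathcal N(J)\rangle$ coming from the $\partial\mathcal O$-marked basis property. Since every element of $\mathcal B^{(t)}$ is homogeneous of degree $t$, the rewriting preserves the degree and can be analysed inside the finite-dimensional vector space $R_t$. For Noetherianity I would track the following measure along a rewriting chain. A single step $f \rid{\mathcal B^{(t)}} f - \lambda x^\eta b_\gamma$ removes the term $x^\eta x^\gamma$ from $\mathrm{supp}(f)$ and introduces only new terms of the form $x^\eta m'$ with $m' \in \mathrm{supp}(\mathrm{Nf}_I(x^\gamma)) \subseteq \mathcal N(J)$. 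If such a new term $x^\eta m'$ still lies in $J$, then any subsequent rewriting of it uses some decomposition $x^\eta m' = x^{\eta''} x^{\gamma''}$ with $x^{\gamma''} \in \partial\mathcal O$ and $\max(x^{\eta''}) \leq \min(x^{\gamma''})$; Lemma~\ref{lemma:lex} applied with $x^\delta = x^\eta$ and $m = m'$ gives $x^{\eta''} <_{\mathrm{lex}} x^\eta$ in the case $x^\eta m' \in J\setminus\partial\mathcal O$, while in the boundary case $x^\eta m' \in \partial\mathcal O$ the trivial decomposition with $x^{\eta''} = 1$ still furnishes $1 <_{\mathrm{lex}} x^\eta$. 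Since the relevant multipliers live in the finite set of exponents of degree at most $t-1$, the well-founded multiset ordering induced by the lex order on these multipliers strictly decreases with every rewriting step, so every chain must terminate.

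For confluence, I would first show that every terminal polynomial has support inside $\mathcal N(J)$. The key point is the inclusion $\mathcal P_J \subseteq \partial\mathcal O$: if $x^\alpha \in \mathcal P_J$ had $x^\alpha/\min(x^\alpha) \in J$, then the Pommaret decomposition of the quotient would combine with the factor $\min(x^\alpha)$ to yield a second, non-trivial Pommaret decomposition of $x^\alpha$, contradicting its uniqueness. Hence for every $x^\beta \in J_t$ the quasi-stable factorisation $x^\beta = x^\delta x^\alpha$ with $x^\alpha \in \mathcal P_J \subseteq \partial\mathcal O$ supplies a valid element $x^\delta b_\alpha \in \mathcal B^{(t)}$ available for rewriting, so no term of $J$ can survive in a terminal polynomial. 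Because $\mathcal B$ is a $\partial\mathcal O$-marked basis, the direct-sum decomposition $(R_A)_t = I_t \oplus \langle \mathcal N(J)_t\rangle_A$ holds, and any reduction $f \crid{\mathcal B^{(t)}} g$ therefore satisfies $f-g \in I_t$ and $g \in \langle \mathcal N(J)_t\rangle_A$, so $g$ equals the unique projection of $f$ onto the second summand. Two terminal reductions of the same $f$ thus coincide, and combined with Noetherianity this gives confluence.

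The delicate point will be the Noetherianity argument: unlike $\mathcal H^{(t)}$, the set $\mathcal B^{(t)}$ may contain several distinct polynomials with the same head term (as already hinted by Lemma~\ref{lemma: multiples border}), so the rewriting branches more freely and one must verify that Lemma~\ref{lemma:lex} forces a strict lex-descent of the multiplier for \emph{every} admissible choice of rewriter, handling both the generic case $x^\eta m' \in J\setminus\partial\mathcal O$ and the boundary case $x^\eta m' \in \partial\mathcal O$ uniformly.
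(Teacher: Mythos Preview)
Your proposal is correct and follows essentially the same route as the paper: Noetherianity via the lex-descent of the multiplier governed by Lemma~\ref{lemma:lex}, and confluence from the uniqueness of the $\langle\mathcal N(J)\rangle$-component in the direct sum $R_t = I_t \oplus \langle\mathcal N(J)_t\rangle$. Your treatment is in fact slightly more explicit than the paper's, which neither spells out why terminal polynomials must have support in $\mathcal N(J)$ (you do this via $\mathcal P_J \subseteq \partial\mathcal O$) nor addresses the boundary case $x^\eta m \in \partial\mathcal O$ that you single out; the concern you raise about non-trivial rewriters in that boundary case is real but harmless, since the proof of Lemma~\ref{lemma:lex} goes through verbatim there once one notes that $x^\delta\neq 1$ (because $m'\in\mathcal N(J)$ forces the product into $J$ only if the cofactor is non-trivial).
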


\begin{proof}
We show that the rewriting procedure given by the relation $\rid{\mathcal B^{(t)}}$ ends after a finite number of steps when applied to any term. 

Let $x^\delta x^\alpha$ any term  that belongs to $J\setminus \partial\mathcal O$, with $x^\alpha\in \partial\mathcal O$. Then there exist a term $x^\gamma \in \partial\mathcal O$ and a term $x^\eta$ such that $\max(x^\eta)\leq \min(x^\gamma)$ and $x^\delta x^\alpha=x^\eta x^\gamma$.

Hence the first step of the rewriting procedure consists in computing $x^\delta x^\alpha - x^\eta b_{\gamma}$, in which every term that appears with a non-null coefficient is of type $x^\eta m$, with $m\in \mathcal N(J)$. If $x^\eta m$ belongs to $J\setminus \partial\mathcal O$ then we can apply Lemma \ref{lemma:lex} and conclude.

For confluency, it is now enough to observe that for every $g \in R$, by Noetherianity there exists  $h\in \langle \mathcal N(J)\rangle$ such that $g\rid{\mathcal B^{(t)}} h$. Then $h-\mathrm{Nf}_{I}(g)\in I\cap \langle \mathcal N(J)\rangle$, but the latter is $\{0\}$ since $I$ is generated by the marked basis $\mathcal H$. Hence $h=\mathrm{Nf}_{I}(g)$.
\end{proof}
  
\begin{remark}
In terms of reduction structures \cite{CMR}, $\mathcal B$ and the terms that allow the definition of $\mathcal B^{(t)}$ give a substructure of the border reduction structure considered in~\cite{KRvol2}. The difference is that in \cite{KRvol2} the authors admit multiplication of polynomials in $\mathcal B$ by any term. The polynomial reduction of Definition \ref{def:riscrittura border}, being Noetherian, proves that the border reduction structure of \cite{KRvol2} is weakly Noetherian \cite[Definition~5.1]{CMR}.
\end{remark}

\begin{proposition}\label{prop:decomposition}
If $\mathcal H$ is a $\mathcal P_J$-marked basis then, for every integer~$t$, for every $b_{\tau}\in \mathcal B_{t-1}$ and for every non-multiplicative variable $x_i$ of $x^\tau$, we have the following decompositions, where $L_b$ is a linear form of multiplicative variables of the head term of $b$ and $c_h\in \mathbb K$:
\begin{equation}\label{eq:syzygy 1} 
x_ib_{\tau}= \sum_{b\in \mathcal B_{t-1}} L_b b + \sum_{h\in \mathcal H_t} c_h h,
\end{equation} 
if $x_ix^\tau=x_k x^\gamma$, with $x_k$ multiplicative variable of $x^\gamma\in \partial\mathcal O$,
\begin{equation}\label{eq:syzygy 2} 
x_ib_{\tau}-x_kb_{\gamma}= \sum_{b\in \mathcal B_{t-1}} L_b b + \sum_{h\in \mathcal H_t} c_h h,
\end{equation} 
if $x_ix^\tau=x_k x^\gamma$, with $x_k$ non-multiplicative variable of $x^\gamma\in \partial\mathcal O\setminus\{x^\tau\}$.
\end{proposition}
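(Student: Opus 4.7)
The plan is to obtain both decompositions by running the rewriting relation $\rid{\mathcal{B}^{(t)}}$ of Definition \ref{def:riscrittura border} on the polynomial $f := x_i b_\tau$ in case 1 and on $f := x_i b_\tau - x_k b_\gamma$ in case 2, making specific choices of reductor at each step so as to force the resulting expression to take the claimed form. In both cases $f$ is homogeneous of degree $t$ and lies in $I$, since $\mathcal{B}\subseteq I$ and $\mathcal{H}$ generates $I$.

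First I would cancel the head term. In case 1, $x_k b_\gamma$ is a valid element of $\mathcal{B}^{(t)}$ because $x_k$ is multiplicative for $x^\gamma\in\partial\mathcal{O}$ of degree $t-1$, so subtracting $x_k b_\gamma$ from $x_i b_\tau$ contributes the multiplier $x_k$ to $L_{b_\gamma}$; in case 2 this cancellation is already built into $f$. Thereafter I would proceed iteratively on each term $x^\rho$ of degree $t$ in the current polynomial with coefficient $\lambda\neq 0$: if $x^\rho\in\mathcal{P}_J$ I reduce by $h_\rho\in\mathcal{H}_t$ (which coincides with $b_\rho$), contributing $\lambda$ to $c_{h_\rho}$; if $x^\rho\in\partial\mathcal{O}\setminus\mathcal{P}_J$, Lemma \ref{lemma: multiples border} yields a decomposition $x^\rho = x_\ell x^\sigma$ with $x^\sigma\in\partial\mathcal{O}$ of degree $t-1$ and $x_\ell$ multiplicative for $x^\sigma$, and I reduce by $x_\ell b_\sigma\in\mathcal{B}^{(t)}$, contributing $\lambda x_\ell$ to $L_{b_\sigma}$.

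The key observation that makes these choices always available is that every term yet to be reduced belongs to $\mathcal{N}(J)\cup\partial\mathcal{O}$, never to $J\setminus\partial\mathcal{O}$: such terms arise only as a single variable times an element of $\mathcal{N}(J)$ (initially from $x_i\cdot\mathrm{tail}(b_\tau)$ and $x_k\cdot\mathrm{tail}(b_\gamma)$, later from $x_\ell\cdot\mathrm{tail}(b_\sigma)$), so by Definition \ref{def:border} they lie in $\partial\mathcal{O}$ whenever they lie in $J$. Once this is established, Proposition \ref{prop:NoethConfl} ensures termination at a polynomial in $\langle\mathcal{N}(J)\rangle$, and $f\in I$ combined with $I\cap\langle\mathcal{N}(J)\rangle = 0$ forces this polynomial to be $0$. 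Collecting multipliers then yields the claimed form, with $L_b$ a linear form in multiplicative variables of $\mathrm{Ht}(b)$ and $c_h\in\mathbb{K}$.

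The hard part will be justifying that the systematic choices I have described really stay inside $\mathcal{B}^{(t)}$: a priori, a term $x^\rho\in\partial\mathcal{O}\setminus\mathcal{P}_J$ of degree $t$ could also be reduced using $b_\rho\in\mathcal{B}_t\setminus\mathcal{H}_t$, whose contribution would not fit the claimed template. Lemma \ref{lemma: multiples border} is exactly what lets every such $b_\rho$ be replaced by an alternative reductor $x_\ell b_\sigma$ with $b_\sigma\in\mathcal{B}_{t-1}$ sharing the same head term, and the confluence part of Proposition \ref{prop:NoethConfl} guarantees that these consistent choices still produce a valid representation of $f$.
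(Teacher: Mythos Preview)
Your proposal is correct and follows essentially the same approach as the paper's proof: cancel the head term via $x_k b_\gamma$, observe that every remaining term is of the form (variable)$\cdot$(term in $\mathcal N(J)$) and hence lies in $\mathcal N(J)\cup\partial\mathcal O$, reduce terms in $\mathcal P_J$ by constant multiples of elements of $\mathcal H_t$, reduce terms in $\partial\mathcal O\setminus\mathcal P_J$ via the multiplicative-variable factorisation of Lemma~\ref{lemma: multiples border}, and iterate. The only minor remark is that your appeal to confluence in the last paragraph is unnecessary: your specific reduction choices are all legitimate $\rid{\mathcal B^{(t)}}$ steps, so Noetherianity alone (from Proposition~\ref{prop:NoethConfl}) gives termination, and $f\in I$ together with $I\cap\langle\mathcal N(J)\rangle=\{0\}$ forces the remainder to vanish---confluence plays no role here.
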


\begin{proof}
If $\mathcal H$ is a $\mathcal P_J$-marked basis then $f \crid{\mathcal B^{(t)}} 0$ for every polynomial $f\in I$ because $I_t\cap \langle \mathcal N(J)_t \rangle_A=0$, for every $t\leq \mathrm{reg}(J)+1$.

Let $p:=x_ib_\tau - x_kb_{\gamma}$ in both cases that are considered in the statement. 

If $x_ix^\tau=x_k x^\gamma$ with $x_k$ multiplicative variable of $x^\gamma$ then $x_ib_\tau \rid{\mathcal B^{(t)}} p=x_ib_\tau - x_kb_{\gamma}$. 

We now observe that in any case the terms appearing with a non-null coefficient in the polynomial $p=x_ib_\tau - x_kb_{\gamma}$ are multiples of a term in $\mathcal N(J)$ by a variable, i.e. they are of type $x_\ell m$, with $m\in \mathcal N(J)$. Hence, they belong to either $\mathcal N(J)$ or $\partial\mathcal O_t$. 

If $x_\ell m$ belongs to $\mathcal N(J)$ then $x_\ell m \crid{\mathcal B^{(t)}} x_\ell m$.

If $x_\ell m$ belongs to $\mathcal P_J$, then  there is $\mu\in\mathbb{K}$ such that $p \rid{\mathcal B^{(t)}} p- \mu \ b_{x_\ell m}$. 

Otherwise, if $x_\ell m$ belongs to $\partial\mathcal O \setminus\mathcal P_J$ then $x_\ell m=x_\ell x_h m'$, where $x_h=\min(x_\ell m)$, $m'= m/x_h \in \mathcal N(J)$ and $x_\ell m'\in \partial\mathcal O$. Hence, $p \rid{\mathcal B^{(t)}} p- \mu \ x_hb_{x_\ell m'}$ for some $\mu\in \mathbb K$, because $x_h$ is multiplicative variable of $x_\ell m'$ .

We can conclude by repeating the above argument.
\end{proof}

The following result is a version of \cite[Definition 20 and Proposition 21]{KK2005} in terms of multiplicative and non-multiplicative variables (see \cite{HM2011} for a careful study of syzygies of $\partial \mathcal O$-marked bases).

\begin{lemma}\label{lemma: syzygies generators}
The couples of distinct terms $x^\tau$ and $x^\gamma$ in $\partial\mathcal O$ such that either $x_ix^\tau=x^\gamma\in \mathcal P_J$ or $x_ix^\tau=x_kx^\gamma=x_ix_km$, with $m\in\mathcal N(J)$ and either $x_i$ non-multiplicative of $x^\tau$ or $x_k$ non-multiplicative of $x^\gamma$, give rise to a set of syzygies of type $[\dots,x_i,\dots,-1,\dots]$ and $[\dots,x_i,\dots,-x_k,\dots]$, respectively, of $\partial\mathcal O$ which generate the first module of syzygies of $\partial\mathcal O$. 
\end{lemma}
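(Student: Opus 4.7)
Plan: I would first verify trivially that each listed tuple is a syzygy: for Type~1, $x_i\cdot x^\tau - 1\cdot x^\gamma = 0$ by construction, and for Type~2, $x_i\cdot x^\tau - x_k\cdot x^\gamma = 0$. The real work is the generating statement. Since $J=(\partial\mathcal O)$ is a monomial ideal, a standard Taylor-complex argument shows that its first module of syzygies is generated over $R$ by the pairwise LCM syzygies $T_{\tau,\tau'}=\frac{L}{x^\tau}e_\tau - \frac{L}{x^{\tau'}}e_{\tau'}$, where $L=\mathrm{lcm}(x^\tau,x^{\tau'})$, for distinct $x^\tau,x^{\tau'}\in\partial\mathcal O$. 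I would then show each $T_{\tau,\tau'}$ lies in the submodule generated by Type~1 and Type~2 syzygies by induction on $\deg L$.

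For the base case, suppose $L/x^\tau$ and $L/x^{\tau'}$ each have degree at most $1$. If $L/x^{\tau'}=1$ then $L=x^{\tau'}=x_ix^\tau$, and either $x^{\tau'}\in\mathcal P_J$ (so $T_{\tau,\tau'}$ is exactly a Type~1 syzygy) or $x^{\tau'}\in\partial\mathcal O\setminus\mathcal P_J$, in which case Lemma~\ref{lemma: multiples border} rewrites $x^{\tau'}=x_k x^{\tau''}$ with $x^{\tau''}\in\partial\mathcal O$ and $x_k=\min(x^{\tau'})$, reducing $T_{\tau,\tau'}$ to a shorter case through $x^{\tau''}$. If instead $L/x^\tau=x_i$ and $L/x^{\tau'}=x_k$ with $x_i\neq x_k$, write $m:=L/(x_ix_k)$; the membership $x^\tau=x_km\in\partial\mathcal O$ forces $m\in\mathcal N(J)$, and at least one of $x_i$ non-multiplicative for $x^\tau$ or $x_k$ non-multiplicative for $x^{\tau'}$ must hold (else both representations $L=x_i\cdot x^\tau=x_k\cdot x^{\tau'}$ would be multiplicative-cone decompositions of $L\in J$, contradicting the uniqueness of the Pommaret factorization), so $T_{\tau,\tau'}$ is a Type~2 syzygy. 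For the inductive step, $x^\alpha:=L/x^\tau$ has degree $\ge 2$; I would pick a variable $x_i$ dividing $x^\alpha$ and examine $x_ix^\tau\in J$. According to whether $x_ix^\tau\in\mathcal P_J$, $x_ix^\tau\in\partial\mathcal O\setminus\mathcal P_J$, or $x_ix^\tau\in J\setminus\partial\mathcal O$, I locate an intermediate border term $x^{\tau''}$ such that $x_i e_\tau$ is linked to a multiple of $e_{\tau''}$ by a single Type~1 or Type~2 syzygy: in the last case the Pommaret decomposition $x_ix^\tau=x^\epsilon x^\sigma$ (with $x^\sigma\in\mathcal P_J$, $\max(x^\epsilon)\le\min(x^\sigma)$) combined with Lemma~\ref{lemma:lex} produces such a term. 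Then $T_{\tau,\tau'}$ decomposes as this syzygy, scaled by $x^\alpha/x_i$, plus $T_{\tau'',\tau'}$ whose LCM is strictly smaller. Termination follows from Noetherianity of $\rid{\mathcal B^{(t)}}$ (Proposition~\ref{prop:NoethConfl}).

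The main obstacle I anticipate is handling the case $x_ix^\tau\in J\setminus\partial\mathcal O$: here I must produce an intermediate border term whose insertion both shrinks the LCM and preserves the non-multiplicativity condition required by Type~2. Lemmas~\ref{lemma: multiples border} and~\ref{lemma:lex} are tailored precisely for this descent --- the former supplies a canonical minimum-variable factorization of border terms outside $\mathcal P_J$, and the latter enforces that intermediate factorizations through border terms are lexicographically smaller --- but the combinatorial bookkeeping of verifying at each step that either $x_i$ is non-multiplicative for the source term or the pairing variable is non-multiplicative for the target will be the central technical challenge.
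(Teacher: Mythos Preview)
Your approach differs substantially from the paper's. The paper's proof is two sentences: it invokes \cite[Lemma~13.1 and Corollary~13.2]{MMM} for the fact that the neighbour syzygies of $\partial\mathcal O$ (viewed as a Gr\"obner basis of the monomial ideal $J$) already generate the first syzygy module, and then simply observes that in any relation $x_ix^\tau=x_kx^\gamma=x_ix_km$ the two variables cannot both be multiplicative, since each would then have to equal $\min(x_ix_km)$. Your argument for the across-the-street base case (``contradicting the uniqueness of the Pommaret factorization'') is precisely this second sentence, only phrased more elaborately. Everything else in your plan --- the induction reducing arbitrary Taylor syzygies to neighbour syzygies --- is a re-derivation of the cited result from \cite{MMM}, which the paper deliberately imports rather than reproves.

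Attempting a self-contained argument is reasonable, but your sketch has genuine gaps. In the next-door base case $x^{\tau'}=x_ix^\tau\in\partial\mathcal O\setminus\mathcal P_J$, the ``reduction through $x^{\tau''}$'' does not shorten anything: writing $x^{\tau'}=x_kx^{\tau''}$ via Lemma~\ref{lemma: multiples border} with $x_k=\min(x^{\tau'})$, one finds $\mathrm{lcm}(x^\tau,x^{\tau''})=x^{\tau'}$ again, so $\deg L$ does not drop; worse, when $x_i$ is multiplicative for $x^\tau$ one gets $x^{\tau''}=x^\tau$ and no new term at all. In the inductive step you assert that the residual syzygy $T_{\tau'',\tau'}$ has strictly smaller $\mathrm{lcm}$, but in general only $\mathrm{lcm}(x^{\tau''},x^{\tau'})\mid L$ holds, possibly with equality; induction on $\deg L$ alone is therefore not well-founded, and a finer invariant (such as the border index, or $\deg(L/x^\tau)+\deg(L/x^{\tau'})$) is required. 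These are exactly the combinatorics packaged inside the MMM citation, so unless you specifically want a self-contained treatment, following the paper and quoting the result is far cleaner.
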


\begin{proof}
The statement holds without the request on the variables thanks to \cite[Lemma 13.1 and Corollary 13.2]{MMM}, because $\partial\mathcal O$ is a Gr\"obner basis for $J$, being $J$ a monomial ideal.
Then it is enough to observe that $x_i$ and $x_k$ cannot be both multiplicative otherwise they both should coincide with $\min(x_ix_km)$. 
\end{proof}

\begin{theorem}\label{th:dependence}
A polynomial $h_\beta$ of $\mathcal H$ depends on $\mathcal H\setminus\{h_\beta\}$ if and only if $h_\beta$ appears with a non-null constant coefficient in a representation of type \eqref{eq:syzygy 1} or \eqref{eq:syzygy 2}.
\end{theorem}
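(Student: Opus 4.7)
The plan is to work at degree $t := \deg h_\beta$ and systematically exploit the observation that every $b \in I_{t-1}$ has zero coefficient at $h_\beta$ in its standard $\mathcal H$-representation for purely degree-theoretic reasons: a homogeneous standard coefficient $Q_{h_\beta, b}$ in $b = \sum_h Q_{h, b} h$ has degree $(t-1) - t = -1$ and hence vanishes. Consequently $I_{t-1} \subseteq (\mathcal H \setminus \{h_\beta\})$, and, more generally, every border basis element $b_\sigma$ with $\deg \sigma \le t-1$ as well as every multiplicative product $L_b b$ arising in a type \eqref{eq:syzygy 1} or \eqref{eq:syzygy 2} representation belongs to $(\mathcal H \setminus \{h_\beta\})$.

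For the direction $(\Leftarrow)$, assume a representation of type \eqref{eq:syzygy 1} or \eqref{eq:syzygy 2} has $c_{h_\beta} \ne 0$. Isolating the $h_\beta$-term gives
\[
c_{h_\beta}\, h_\beta \;=\; x_i b_\tau \;-\; x_k b_\gamma \;-\; \sum_b L_b\, b \;-\; \sum_{h \ne h_\beta} c_h\, h,
\]
where the term $x_k b_\gamma$ is present only in case \eqref{eq:syzygy 2}. Each summand on the right lies in $(\mathcal H \setminus \{h_\beta\})$ by the observation above. Dividing by the non-zero scalar $c_{h_\beta}$ yields $h_\beta \in (\mathcal H \setminus \{h_\beta\})$, and so $h_\beta$ depends on $\mathcal H \setminus \{h_\beta\}$.

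For the direction $(\Rightarrow)$, assume $h_\beta$ depends on $\mathcal H \setminus \{h_\beta\}$. By the characterization recalled immediately after Theorem \ref{th:equivalent conditions}, there is a homogeneous $\mathcal H$-syzygy $(R_h)$ of degree $t$ with $R_{h_\beta}$ a non-zero constant. Extending this by zero at $b_\sigma$ for $\sigma \in \partial\mathcal O \setminus \mathcal P_J$ produces a $\mathcal B$-syzygy of degree $t$ whose entry at $h_\beta \in \mathcal H_t \subseteq \mathcal B_t$ is the same non-zero constant. Now Lemma \ref{lemma: syzygies generators} identifies $R$-module generators of $\mathrm{Syz}(\partial\mathcal O)$, and Proposition \ref{prop:decomposition} lifts each such monomial syzygy to the $\mathcal B$-syzygy attached to a representation of type \eqref{eq:syzygy 1} or \eqref{eq:syzygy 2}; the Noetherianity and confluence of $\rid{\mathcal B^{(t)}}$ from Proposition \ref{prop:NoethConfl} ensure that these lifted syzygies generate $\mathrm{Syz}(\mathcal B)$ as an $R$-module. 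Decomposing our degree-$t$ $\mathcal B$-syzygy in this generating set and tracking degrees homogeneously, only the lifted syzygies of total degree $t$ (those $(\tau, x_i)$ with $\deg x_i b_\tau = t$) can contribute to the degree-$0$ entry at $h_\beta$, and for those the combining scalar must itself be a constant. Therefore the non-zero constant $R_{h_\beta}$ is a $\mathbb K$-linear combination of the scalars $c^{(\tau, i)}_{h_\beta}$ coming from such Prop-rep syzygies, and at least one $c^{(\tau, i)}_{h_\beta}$ must be non-zero, producing the desired representation.

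The main technical hurdle is the generation statement—verifying that the lifted syzygies from Proposition \ref{prop:decomposition} generate the full $\mathcal B$-syzygy module as an $R$-module—together with the degree-tracking that isolates the scalar entry at $h_\beta$. Both points follow from the confluence and Noetherianity of $\rid{\mathcal B^{(t)}}$ by a Buchberger-style argument adapted to the border-basis setting.
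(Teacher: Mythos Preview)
Your overall strategy matches the paper's: both directions hinge on the fact that the syzygies arising from the representations \eqref{eq:syzygy 1} and \eqref{eq:syzygy 2} generate $\mathrm{Syz}(\mathcal B)$. Your $(\Leftarrow)$ direction and the degree-tracking you do in $(\Rightarrow)$ are correct, and in fact more explicit than the paper's terse closing sentence.

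The gap is precisely at what you yourself flag as the ``main technical hurdle.'' You assert that generation of $\mathrm{Syz}(\mathcal B)$ by the lifted syzygies follows from Noetherianity and confluence of $\rid{\mathcal B^{(t)}}$ ``by a Buchberger-style argument adapted to the border-basis setting,'' but this is not a proof. Schreyer's argument for Gr\"obner bases runs a descending induction on the leading term of a syzygy with respect to a term order; border bases carry no such order, and confluence of the reduction relation by itself only yields unique normal forms, not that the critical-pair syzygies generate the full syzygy module. Making this rigorous in the border setting is genuine additional work (cf.\ the analysis in \cite{HM2011}), and nothing in Proposition~\ref{prop:NoethConfl} supplies the missing well-founded order on $\mathrm{Syz}(\mathcal B)$.

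The paper circumvents this entirely by a different mechanism: it recalls that the family of all $\partial\mathcal O$-marked bases is flat (via its isomorphism with the $\mathcal P_J$-marked family, \cite[Corollary~2]{BC2022}) and then invokes Artin's criterion for flat morphisms \cite[Corollary to Proposition~3.1]{Artin}. That criterion says exactly that a generating set for the syzygies of the special fibre $\partial\mathcal O$ (provided by Lemma~\ref{lemma: syzygies generators}) lifts to a generating set for $\mathrm{Syz}(\mathcal B)$. This converts your hurdle into a one-line citation. You should either adopt the flatness argument or replace the hand-wave with a complete Schreyer-type proof tailored to $\rid{\mathcal B^{(t)}}$.
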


\begin{proof}
We recall that the family of all $\partial\mathcal O$-marked bases is flat. This fact can be proved observing for example that the family of all $\partial\mathcal O$-marked bases is isomorphic to the family of $\mathcal P_J$-marked bases (see \cite[Corollary 2]{BC2022}), which is flat.

Letting $\mathcal B=\partial\mathcal O$, the syzygies arising from \eqref{eq:syzygy 1} and \eqref{eq:syzygy 2} generate the first module of syzygies of the border (see Lemma \ref{lemma: syzygies generators}). Then, for any $\partial\mathcal O$-border basis $\mathcal B$, the corresponding syzygies of $\mathcal B$ that are obtained from them by $\rid{\mathcal B^{(t)}}$ generate the first module of syzygies of $\mathcal B$, thanks to the criterion of Artin for flat morphisms (see \cite[Corollary to Proposition 3.1]{Artin}). 

Now we can conclude observing that in a representation of type \eqref{eq:syzygy 1} and \eqref{eq:syzygy 2} only proper multiples of polynomials of $\mathcal B_{t-1}$ and polynomials of $\mathcal H_t$ appear. 
\end{proof}

\begin{proposition}\label{prop: recursion}
Let $I$ be the ideal generated by a $\mathcal P_J$-marked basis $\mathcal H$. Let $x^\sigma$ be a term in $\partial\mathcal O_{t-1}$ and $x_i$ a variable. 
If $\mathrm{Nf}_{I}(x^\sigma) = \sum_{m_j \in \mathcal N(J)_{t-1}} c_j m_j$, then
\begin{equation}\label{eq: recursion} 
\mathrm{Nf}_{I}(x_i x^\sigma) = \sum_{m_j \in \mathcal N(J)_{t-1}} c_j \mathrm{Nf}_{I}(x_i {m_j}).
\end{equation}
\end{proposition}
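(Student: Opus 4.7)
The plan is to exploit two well-documented properties of the normal form machinery: (i) that $f - \mathrm{Nf}_I(f)$ always lies in $I$ and elements of $I$ have trivial normal form, and (ii) the $\kk$-linearity (additivity) of the reduction relation $\rid{\mathcal H^{(t)}}$ recorded in Section \ref{sec:preliminaries}.

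First, by the very definition of the normal form via \eqref{eq:standard representation}, the polynomial $x^\sigma - \mathrm{Nf}_I(x^\sigma) = x^\sigma - \sum_{j} c_j m_j$ belongs to $I$. Multiplying by the variable $x_i$ yields
\[
x_i x^\sigma - \sum_{j} c_j\, x_i m_j \in I.
\]
Since $I$ is generated by the marked basis $\mathcal H$, Definition \ref{def:sets and bases} together with the confluence of $\rid{\mathcal H^{(t)}}$ forces $\mathrm{Nf}_I(p) = 0$ for every $p \in I$, and it also guarantees that two polynomials whose difference lies in $I$ share the same normal form. Hence
\[
\mathrm{Nf}_I(x_i x^\sigma) \;=\; \mathrm{Nf}_I\Bigl(\sum_{j} c_j\, x_i m_j\Bigr).
\]

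Second, I would invoke the additivity of $\rid{\mathcal H^{(t)}}$ (stated in Section \ref{sec:preliminaries}, following Theorem \ref{th:equivalent conditions}), which in particular implies that $\mathrm{Nf}_I\colon R_A \to \langle \mathcal N(J)\rangle_A$ is $\kk$-linear on each homogeneous component. Applying this linearity to the degree-$t$ polynomial $\sum_j c_j\, x_i m_j$ gives
\[
\mathrm{Nf}_I\Bigl(\sum_{j} c_j\, x_i m_j\Bigr) \;=\; \sum_{j} c_j\, \mathrm{Nf}_I(x_i m_j),
\]
and combining this with the preceding equality yields the claimed identity \eqref{eq: recursion}.

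There is no genuine obstacle here: the only subtlety is to make sure that both sides are interpreted as normal forms taken with respect to the degree-$t$ rewriting $\rid{\mathcal H^{(t)}}$, so that additivity applies. Note that each $x_i m_j$ is either already in $\mathcal N(J)_t$ (in which case $\mathrm{Nf}_I(x_i m_j) = x_i m_j$) or lies in $\partial \mathcal O_t$, so its normal form is computed by a single reduction against $\mathcal B$ (or equivalently a finite sequence of $\mathcal H^{(t)}$-reductions by Proposition \ref{prop:NoethConfl}). This observation is what makes the proposition useful as a recursive rule for propagating normal forms from degree $t-1$ to degree $t$.
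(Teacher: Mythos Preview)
Your argument is correct and follows essentially the same route as the paper's own proof: both start from $x_i x^\sigma - \sum_j c_j\, x_i m_j \in I$, and then conclude by uniqueness of the normal form in $\langle \mathcal N(J)\rangle$. The only cosmetic difference is that you phrase the final step as ``$\mathrm{Nf}_I$ is $\kk$-linear'', whereas the paper spells this out by noting that $\sum_j c_j\,\mathrm{Nf}_I(x_i m_j)$ lies in $\langle \mathcal N(J)_t\rangle$ and differs from $x_i x^\sigma$ by an element of $I$, invoking $I_t \cap \langle \mathcal N(J)_t\rangle = \{0\}$ directly.
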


\begin{proof}
By construction, the polynomial $x^\sigma - \mathrm{Nf}_{I}(x^\sigma) = x^\sigma - \sum_{m_j \in \mathcal N(J)_{t-1}} c_j m_j$ belongs to $I$, hence $x_i x^\sigma - \sum_{m_j \in \mathcal N(J)_{t-1}} c_j x_i m_j$ belongs to $I$, as well. 

Since $\sum_{m_j \in \mathcal N(J)_{t-1}} c_jx_i m_j - \sum_{m_j \in \mathcal N(J)_{t-1}} c_j \mathrm{Nf}_I(x_i m_j)$ belongs to $I$ too, we have
$$x_i x^\sigma - \sum_{m_j \in \mathcal N(J)_{t-1}} c_j \mathrm{Nf}_I(x_i m_j) \in I.$$
Being $x_i x^\sigma - \mathrm{Nf}_{I}(x_i x^\sigma)$ a polynomial of $I$, then $\mathrm{Nf}_{I}(x_i x^\sigma)=\sum_{m_j \in \mathcal N(J)_{t-1}} c_j \mathrm{Nf}_{I}(x_i m_j)$ because $I_t\cap \langle \mathcal N(J)_t \rangle_A=0$     and we conclude.
\end{proof}

\begin{corollary}\label{cor: recursive construction}
Formula \eqref{eq: recursion} allows a recursive computation of the polynomials in $\mathcal B_t \setminus \mathcal H$, for every $t$, knowing the polynomials of $\mathcal H_t$.
\end{corollary}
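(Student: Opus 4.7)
The plan is to proceed by induction on the degree $t$, carrying as inductive hypothesis that for every $x^\sigma \in \partial\mathcal O$ of degree strictly less than $t$ the polynomial $b_\sigma = x^\sigma - \mathrm{Nf}_I(x^\sigma)$ has already been produced using only the given polynomials of $\mathcal H$ and Formula~\eqref{eq: recursion}.

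For the base case I would take the smallest degree $t_0$ at which $\partial\mathcal O_{t_0}$ is nonempty. Every element of $\partial\mathcal O_{t_0}$ must then belong to $\mathcal P_J$: otherwise, by Lemma~\ref{lemma: multiples border} it would be a multiplicative multiple of a border term of strictly smaller degree, contradicting the minimality of $t_0$. Hence $\mathcal B_{t_0} \setminus \mathcal H = \emptyset$ and the induction starts trivially.

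For the inductive step I would fix $x^\tau \in \partial\mathcal O_t \setminus \mathcal P_J$, invoke Lemma~\ref{lemma: multiples border} to decompose $x^\tau = x_k x^\sigma$ with $x^\sigma \in \partial\mathcal O_{t-1}$ and $x_k$ multiplicative for $x^\sigma$, and apply Proposition~\ref{prop: recursion} to write
\[
\mathrm{Nf}_I(x^\tau) = \sum_j c_j \mathrm{Nf}_I(x_k m_j),
\]
where the coefficients $c_j$ and terms $m_j \in \mathcal N(J)_{t-1}$ coming from $\mathrm{Nf}_I(x^\sigma) = \sum_j c_j m_j$ are known by the inductive hypothesis. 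Each term $x_k m_j$ has degree $t$ and lies in exactly one of three classes: in $\mathcal N(J)_t$, so its normal form is itself; in $\mathcal P_J$, so its normal form is read off the tail of the corresponding $h_{x_k m_j} \in \mathcal H_t$; or in $\partial\mathcal O_t \setminus \mathcal P_J$, which is exactly the set of terms whose normal forms we are trying to compute.

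The main obstacle I anticipate is this last class, because it creates a potentially self-referential dependency at degree $t$ that does not reduce to strictly simpler data. My plan for removing the circularity is to collect the identities produced by Formula~\eqref{eq: recursion} for \emph{all} $x^\tau \in \partial\mathcal O_t \setminus \mathcal P_J$ simultaneously; this yields a linear system over $\kk$ whose unknowns are the coefficient vectors of the $\mathrm{Nf}_I(x^\tau)$ in the basis $\mathcal N(J)_t$ of $\langle \mathcal N(J)_t \rangle_\kk$. Since $\mathcal H$ is a $\mathcal P_J$-marked basis, we have $I_t \cap \langle \mathcal N(J)_t \rangle_\kk = 0$, so the normal-form map is well-defined and the linear system is necessarily consistent with a unique solution. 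Extracting that solution gives every missing $b_\tau \in \mathcal B_t \setminus \mathcal H$ from $\mathcal B_{t-1}$ and $\mathcal H_t$ alone, closing the induction.
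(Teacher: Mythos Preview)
Your outer induction on the degree $t$ and your use of Lemma~\ref{lemma: multiples border} to write $x^\tau = x_k x^\sigma$ with $x^\sigma \in \partial\mathcal O_{t-1}$ are exactly what the paper does. The divergence is in how you break the circularity within degree $t$: you assemble all instances of~\eqref{eq: recursion} into a linear system in the unknown normal forms, whereas the paper unwinds the self-reference by a second, inner recursion on the variable index.

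The gap is your uniqueness claim for that linear system. You invoke $I_t \cap \langle \mathcal N(J)_t\rangle_\kk = 0$, but this only says that the \emph{normal forms} are uniquely determined as elements of $R_t/I_t$; it does not say that the particular relations you extracted from~\eqref{eq: recursion} are enough to pin them down. A priori another tuple of elements of $\langle \mathcal N(J)_t\rangle_\kk$ could satisfy the same relations, and then you could not decide which solution is the correct one.

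The paper's observation is exactly what fills this hole. When $x_k m_j$ lands in $\partial\mathcal O_t \setminus \mathcal P_J$, applying Lemma~\ref{lemma: multiples border} again (with $x_u = x_k$, $m = m_j$) yields $x_k m_j = x_{k'} x^\alpha$ with $x^\alpha \in \partial\mathcal O_{t-1}$ and $x_{k'} < x_k$. Re-applying~\eqref{eq: recursion} therefore strictly lowers the variable index, so the process terminates; in particular, terms of the form $x_0 m$ with $m\in\mathcal N(J)_{t-1}$ can only lie in $\mathcal P_J$, and the recursion bottoms out at $\mathcal H_t$. Equivalently, your linear system is lower triangular with $1$'s on the diagonal once the unknowns are ordered by the variable index used in the decomposition, which is the missing reason for uniqueness.
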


\begin{proof}
With the same notation of Proposition \ref{prop:decomposition}, 
if $x_i {m_j}$ belongs to $\partial \mathcal O\setminus\mathcal P_J$, then $x_i m_j=x_kx^\alpha$ for a suitable term $x^\alpha\in \partial\mathcal O$ and a variable $x_k<_{\mathrm{lex}} x_i$, thanks to Lemma \ref{lemma: multiples border}.

Hence, the polynomials of $\mathcal B_t$ with head term of type $x_0x^\sigma$ must belong to $\mathcal H$. For the polynomials of $\mathcal B_t$ with head term of type $x_1x^\sigma$, we can apply the formula of Proposition \ref{prop:decomposition} in which only polynomials of $\mathcal B_t$ with head term divisible by $x_0$ are involved. And so on.
\end{proof}

\begin{remark}
In \cite[Appendix]{MMM}, the analog of formula \eqref{eq: recursion} for Gr\"obner bases is called a FGLM-formula (see \cite{FGLM}) and its use for the computation of the polynomials in a border basis is reported referring to a discussion with Marie-Fran\c{c}oise Roy.
\end{remark}

For every $t$ higher than the initial degree of $I$ and lower than or equal to $\mathrm{reg}(J)+1$, we consider the matrix $M_t$ constructed in the following way.

A first block of rows correspond to the terms of degree $t$ that are multiplicative multiples of the terms in $\partial\mathcal O_{t-1}$ and to the terms of degree $t$ in $\mathcal P_J$, in increasing order with respect to the degrevlex order. A second block of rows correspond to the terms of $\mathcal N(J)_t$. 

The columns of $M_t$ are arranged in three blocks. The first block is made by the vectors of the coefficients of the polynomials that are multiples of the polynomials of $\mathcal B_{t-1}$ by a multiplicative variable of the corresponding head term and by the vectors of the coefficients of the polynomials of $\mathcal H_t$, ordered so that their head terms are in increasing degrevlex order (like the first block of rows). The second block are the vectors of the coefficients of the polynomials $x_ib_\tau$ that fit the case \eqref{eq:syzygy 1}. The third block of columns are the vectors of the coefficients of the polynomials $x_ib_{\tau}-x_kb_{\gamma}$ that fit the case~\eqref{eq:syzygy 2}.

It is noteworthy that, thanks to the features of quasi-stable ideals, the columns in the first block have the coefficient $1$ of the head term on pairwise different rows, so that these coefficients will become the pivots of the first block of columns in the completely reduced form of~$M_t$ and any division can be avoided in the performance of the reduction.

\begin{remark}
If we denote by $a:=\min\{t : I_t\not=0\}$ the initial degree of $I$, the matrix $M_a$ is made of the coefficients of the polynomials of degree $a$ of the $\mathcal P_J$-marked basis of $I$ only, which are independent by construction. For this reason we do not need to consider $M_a$. 
\end{remark}

\begin{remark}\label{rem: CI comp cost}
For every degree $t$, both the number of rows and the number of columns of a matrix $M_t$ are of order $O((n+1)^2\ \vert\mathcal N(J)_{t-2}\vert)$. 
\end{remark}

\begin{example}\label{ex: matrices 1}
Let us take the Artinian quasi-stable ideal $J:=(x_1^2,x_0^2)\subseteq \kk[x_0,x_1]$ with $x_0<x_1$. The following polynomials 

$h_1= x_0^2 + d_1 x_0x_1$,

$h_2= x_1^2 + d_2 x_0x_1$,

$h_3 = x_0^2x_1$

\noindent form a $\mathcal P_J$-marked basis, for every choice of the value of the parameters $d_1,d_2$.
The multiples of the polynomials $h_1$ and $h_2$ by the multiplicative variables of the respective head terms are:
\vskip 1mm

$x_0h_1=  x_0^3 + d_1 x_0^2x_1$,

$x_0h_2=  x_0x_1^2 + d_2 x_0^2x_1$,

$x_1h_2=  x_1^3 + d_2 x_0x_1^2$

\noindent and the multiples of the polynomials $h_1$ and $h_2$ by the non-multiplicative variables of the respective head terms are:

$x_1h_1 = x_0^2x_1 + d_1 x_0x_1^2.$

\vskip 1mm
\noindent Here is the matrix $M_3$, where the first row gives labels to the columns by the corresponding polynomials and the first column gives labels to the rows by the corresponding terms. The second block of rows is empty because $\mathcal N(J)_3=\emptyset$. Also the third block of columns is empty. 

\begin{equation}\label{eq:Mt}
\begin{blockarray}{cccccc}
 {\scriptstyle x_0h_1 }&{\scriptstyle h_3} &{\scriptstyle  x_0h_2} &{\scriptstyle  x_1 h_2} &{\scriptstyle x_1h_1}\\ 
\begin{block}{(cccc|c)c}
  1 & 0 & 0 & 0 & 0 &{\scriptstyle x_0^3}\\
d_1 &1 & d_2 & 0 & 1 & {\scriptstyle x_0^2x_1}\\
 0 &0 & 1 & d_2 & d_1& {\scriptstyle x_0x_1^2}\\
0 &0 &0 &1 & 0 & {\scriptstyle x_1^3}\\
 \end{block}
\end{blockarray}
\end{equation}
\end{example}

The following result follows by standard linear algebra.

\begin{proposition}\label{prop:elim}
Let $\mathcal H$ be a $\mathcal P_J$-marked basis, $M_t$ the matrix constructed above for a certain degree $t$ and $M'_t$ the complete reduced matrix of $M_t$. 

A polynomial $h_\beta$ of degree $t$ depends on $\mathcal H\setminus \{h_\beta\}$ if and only if there is a non-null element in at least a crossing of one of the columns of $M'_t$ of the second or third block and the row corresponding to the head term of $h_\beta$.
\end{proposition}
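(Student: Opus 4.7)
The plan is to reinterpret the dependence criterion of Theorem~\ref{th:dependence} as a statement about the row reduction of $M_t$. By that theorem, a polynomial $h_\beta\in\mathcal H_t$ depends on $\mathcal H\setminus\{h_\beta\}$ precisely when $h_\beta$ appears with a non-null constant coefficient $c_{h_\beta}$ in at least one decomposition of type \eqref{eq:syzygy 1} or \eqref{eq:syzygy 2}, that is, in the decomposition associated with some column $v$ of the second or third block of $M_t$. By Proposition~\ref{prop:decomposition} the right-hand side of such a decomposition, $\sum_b L_b b + \sum_h c_h h$, is a $\kk$-linear combination of polynomials listed in the first block of columns of $M_t$: each summand $L_b b$ with $b\in\mathcal B_{t-1}$ is a $\kk$-linear combination of the multiplicative multiples of $b$, and each summand $c_h h$ is a constant multiple of an element of $\mathcal H_t$. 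Hence every column of the second or third block lies, as a coefficient vector, in the $\kk$-span of the first-block columns.

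The first step I would carry out is to verify that the first block of columns of $M_t$ is $\kk$-linearly independent. Every such column carries coefficient~$1$ on a single first-block row (the head term of $x_k b_\tau$ being $x_k x^\tau$ and that of $h_\alpha\in\mathcal H_t$ being $x^\alpha\in\mathcal P_J$), and by the uniqueness of the Pommaret decomposition of terms in $J_t$ these head terms are pairwise distinct and they are exactly the labels of the first block of rows. Row-reducing $M_t$ therefore produces a matrix $M'_t$ in which the first-block columns become the standard basis vectors indexed by the first-block rows (with zero in the sous-escalier rows), while every column $v$ of the second or third block gets represented by the coordinates $\lambda_c$ of its unique expansion $v=\sum_c \lambda_c c$ over the first-block columns $c$.

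From this observation the proof concludes directly: the entry of $M'_t$ at the row labelled $\mathrm{Ht}(c)$ and at column $v$ equals the scalar $\lambda_c$ of the above expansion. Specialising to $c=h_\beta$ and matching with Proposition~\ref{prop:decomposition}, this scalar coincides with the constant coefficient $c_{h_\beta}$ of $h_\beta$ in the syzygy decomposition carried by $v$. Invoking Theorem~\ref{th:dependence} one last time then yields the claimed equivalence: $h_\beta$ depends on $\mathcal H\setminus\{h_\beta\}$ if and only if some entry of $M'_t$ at row $\mathrm{Ht}(h_\beta)$ and in a column of the second or third block is non-null.

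The most delicate point I foresee is the head-term bookkeeping for the first block, namely verifying that no multiplicative multiple of a term in $\partial\mathcal O_{t-1}$ can coincide with a $\mathcal P_J$ term of degree $t$, and that two different multiplicative multiples cannot produce the same head term, so that no duplication of first-block columns occurs. Both facts rest on the uniqueness of the Pommaret decomposition: if $x^\alpha\in\mathcal P_J$ were of the form $x_k x^\tau$ with $x_k\leq \min(x^\tau)$ and $x^\tau\in\partial\mathcal O_{t-1}\subseteq J$, the Pommaret decomposition of $x^\tau$ would yield a non-trivial Pommaret decomposition of $x^\alpha$, contradicting $x^\alpha\in\mathcal P_J$. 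Once this bookkeeping is settled, the conclusion is the standard linear-algebraic interpretation of the reduced form of a matrix whose pivot columns span the column space.
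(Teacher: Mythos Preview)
Your proposal is correct and is precisely the ``standard linear algebra'' that the paper invokes without details: you unpack the construction of $M_t$, combine Proposition~\ref{prop:decomposition} with Theorem~\ref{th:dependence}, and read off the reduced-matrix interpretation. One small refinement: for the distinctness of head terms among multiplicative multiples of \emph{border} terms (which need not lie in $\mathcal P_J$), the Pommaret-uniqueness argument does not apply verbatim; however, the elementary observation that $x_k x^\tau=x_{k'}x^{\tau'}$ with $x_k\le\min(x^\tau)$, $x_{k'}\le\min(x^{\tau'})$ forces $x_k=x_{k'}$ (otherwise the smaller variable would divide the other border term and violate the multiplicativity bound) settles it, and the paper already records this fact just before the proposition.
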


\begin{corollary}\label{cor: CI}
Let $J$ be an Artinian monomial ideal. The strict complete intersection locus in $\MFScheme{J}(\kk)$ is an open subset ${\bf CI}_J$ of $\MFScheme{J}$.
\end{corollary}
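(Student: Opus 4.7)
The strategy closely parallels the openness proof for the arithmetically Gorenstein locus in Corollary~\ref{cor:condition Gore}(ii): I would recast the complete intersection condition as a rank inequality on an explicit matrix built from the universal $\mathcal{P}_J$-marked basis, which defines an open subscheme via the non-vanishing of suitably sized minors. Since $J$ is Artinian, $\mathrm{codim}(J)=n+1$, and by Proposition~\ref{prop:known facts}(ii) every $I\in\MFFunctor{J}(\kk)$ has codimension $n+1$ as well. Because $R$ is Cohen-Macaulay, $I$ is a strict complete intersection if and only if the minimal number of generators $\mu(I)$ of $I$ equals $n+1$; and since $\mu(I)\geq n+1$ always, this is equivalent to $\mu(I)\leq n+1$, that is, to the requirement that at least $|\mathcal{P}_J|-(n+1)$ elements of any $\mathcal{P}_J$-marked basis of $I$ be redundant.

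Working with the universal marked basis $\mathscr{H}\subset R_{\kk[C]/\mathscr{U}}$ from Section~\ref{sec:marked functors}, I would, for each $t$ between the initial degree of $\mathscr{H}$ and $\mathrm{reg}(J)+1$, assemble the matrix $M_t(\mathscr{H})$ and compute its reduced form $M'_t(\mathscr{H})$. Because the first-block pivots are equal to $1_{\kk}$ by the head-term structure of $\mathscr{H}$, the reduction is division-free and preserves coefficients in $\kk[C]/\mathscr{U}$. Stacking horizontally the second and third column blocks of the matrices $M'_t(\mathscr{H})$ across all relevant $t$ produces a single matrix $N(\mathscr{H})$ with rows indexed by $\mathcal{P}_J$. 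By Proposition~\ref{prop:elim}, at every $\kk$-point of $\MFScheme{J}$ the rank of the specialization of $N(\mathscr{H})$ equals the number of redundant generators of the corresponding ideal $I$: minimization of graded ideals proceeds degree-by-degree, and at degree $t$ each linearly independent relation detected in the syzygy columns allows the elimination of precisely one generator of $\mathcal{H}_t$ from any generating set.

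Consequently, the complete intersection locus in $\MFScheme{J}$ is cut out by the rank condition $\mathrm{rank}\,N(\mathscr{H})\geq |\mathcal{P}_J|-(n+1)$, equivalently by the non-vanishing of at least one minor of $N(\mathscr{H})$ of size $|\mathcal{P}_J|-(n+1)$; this is an open condition, defining the required open subset $\mathbf{CI}_J\subseteq\MFScheme{J}$. The main obstacle is the careful accounting just sketched: Proposition~\ref{prop:elim} provides only a per-generator redundancy criterion, so to conclude $\mathrm{rank}\,N(\mathscr{H})=|\mathcal{P}_J|-\mu(I)$ one has to argue that the pointwise redundancies can be witnessed simultaneously by linearly independent syzygies and that no cancellations occur between degrees, both of which follow from the fact that $(\mathfrak{m}I)_t$ is spanned by multiples of $\mathcal{B}_{t-1}$ by the variables and that minimization is additive across degrees.
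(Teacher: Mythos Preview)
Your approach is essentially the same as the paper's: both work with the universal $\mathcal P_J$-marked basis $\mathscr H$ modulo $\mathscr U$, build the matrices $M_t$ and their division-free reductions $M'_t$, and invoke Proposition~\ref{prop:elim} (together with Theorem~\ref{th:dependence}) to read off redundancy of generators from the entries in the second and third column blocks. The paper phrases the resulting open condition somewhat loosely as ``the non-vanishing of the elements corresponding to polynomials of $\mathcal H$ \ldots\ in every combination and number that is sufficient in order to have no more than $c$ minimal generators'', whereas you package the same data into a single stacked matrix $N(\mathscr H)$ and assert a clean rank inequality; this is a cosmetic but pleasant refinement.

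One caution: your claimed identity $\operatorname{rank} N(\mathscr H)=|\mathcal P_J|-\mu(I)$ needs more than the per-generator criterion of Proposition~\ref{prop:elim}. What you really need is that, at each degree $t$, the column space of $N_t$ equals $\{(c_h)_{h\in\mathcal H_t}:\sum c_h h\in(\mathfrak m I)_t\}$. One containment is immediate from Proposition~\ref{prop:decomposition}; for the other you should argue via Theorem~\ref{th:dependence} (the syzygies of type \eqref{eq:syzygy 1}--\eqref{eq:syzygy 2} generate the full syzygy module of $\mathcal B$), not merely from the span claim ``$(\mathfrak m I)_t$ is spanned by multiples of $\mathcal B_{t-1}$'', which is not obvious since $\langle\mathcal B_{t-1}\rangle$ is in general a proper subspace of $I_{t-1}$. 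The paper sidesteps this by not committing to an exact rank formula, but your sharper statement is correct once this gap is filled.
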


\begin{proof} 
Let $\mathscr U\subseteq \kk[C]$ be the ideal defining the scheme $\MFScheme{J}$ and $\mathscr H\subseteq R_{\kk[C]}$ be the $\mathcal P_J$-marked basis modulo $\mathscr U$ as decribed in Section \ref{sec:marked functors}. For every $t\leq \mathrm{reg}(J)$, the polynomials in the corresponding $\partial\mathcal O$-marked basis $\mathcal B_t$ can be computed by $\mathscr H$, like described in Corollary \ref{cor: recursive construction}. Hence, the elements of the matrices $M_t$ are polynomials in the parameters $C$. 

Let $M'_t$ be the completely reduced form of $M_t$, which can be obtained without divisions, thanks to the shape of the matrix $M_t$. Thus, the open subset ${\bf CI}_J$ of $\MFScheme{J}$ parametrizing the complete intersections is defined by the non-vanishing of the elements corresponding to polynomials of $\mathcal H$ in the second or third block of columns of $M'_t$, in every combination and number that is sufficient in order to have no more than $c$ minimal generators for the ideal generated by $\mathscr H$, modulo~$\mathscr U$.
\end{proof}

\begin{corollary}\label{cor:open CI}
The strict complete intersection locus in a Hilbert scheme with a non-constant Hilbert polynomial is an open subset. 
\end{corollary}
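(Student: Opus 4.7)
The plan is to mimic the proof of Corollary \ref{cor:open Gore}, replacing the Gorenstein open locus \mbox{${\mathbf G}_J$} with the complete intersection open locus \mbox{${\mathbf{CI}}_J$} of Corollary \ref{cor: CI}. First I would reduce the problem to marked schemes over Cohen-Macaulay quasi-stable ideals. A strict complete intersection is in particular arithmetically Cohen-Macaulay, so by the deterministic change of coordinates of \cite{HSS} it suffices to consider a homogeneous ideal $I$ generated by a $\mathcal P_J$-marked basis $\mathscr H\subset R_{\kk[C]/\mathscr U}$ for some CM quasi-stable ideal $J$ with the prescribed non-constant Hilbert polynomial. Since $J$ is CM, we have $\rho_J=1$, so Proposition \ref{prop:sottoschema} gives that $\MFScheme{J}$ is an open subscheme of the Hilbert scheme $\HScheme{\mathbb P^n}{p(z)}$.

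Next I would set $x_0=\cdots=x_{d-1}=0$ in the polynomials of $\mathscr H$: by Proposition \ref{prop:Artinian reduction} this produces a marked basis $\mathscr H'$ of an Artinian reduction $I'$ of $I$ over the Artinian quasi-stable ideal $J'=(J+(x_0,\dots,x_{d-1}))/(x_0,\dots,x_{d-1})$. The key technical step is to verify that $I$ is a strict complete intersection if and only if $I'$ is a complete intersection. The codimensions agree by Proposition \ref{prop:known facts} applied to both $I,J$ and $I',J'$, so the only point is that the minimal number of generators is preserved; this follows because the polynomials of $\mathscr H'$ are in bijection with those of $\mathscr H$, and a polynomial $h_\beta\in\mathscr H$ depends on $\mathscr H\setminus\{h_\beta\}$ exactly when its image $h'_\beta\in\mathscr H'$ depends on $\mathscr H'\setminus\{h'_\beta\}$, by graded Nakayama applied to the $R/I$-regular sequence $x_0,\dots,x_{d-1}$ together with Theorem \ref{th:dependence}.

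Then I would apply Corollary \ref{cor: CI} to the Artinian ideal $J'$: it provides an open subset ${\mathbf{CI}}_{J'}\subseteq\MFScheme{J'}$ defined by the non-vanishing of certain explicit polynomials in the coefficients of $\mathscr H'$. These coefficients coincide with the corresponding coefficients of $\mathscr H$ that survive the substitution $x_0=\cdots=x_{d-1}=0$, so the same inequalities cut out an open subset ${\mathbf{CI}}_J\subseteq\MFScheme{J}$, which by the equivalence of the previous paragraph parametrises exactly the strict complete intersection ideals inside $\MFScheme{J}$. Finally, the strict complete intersection locus in $\HScheme{\mathbb P^n}{p(z)}$ is the union of the open subsets ${\mathbf{CI}}_J$ over all CM quasi-stable $J$ with Hilbert polynomial $p(z)$, together with their images under linear changes of variables, hence is open.

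The main obstacle I anticipate is the careful verification in the middle step that the number of minimal generators of $I$ and of its Artinian reduction $I'$ coincide in the marked-basis language; everything else is a direct transcription of the scheme used for Corollary \ref{cor:open Gore}.
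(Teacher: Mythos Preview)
Your proposal is correct and follows essentially the same approach as the paper: the paper's own proof is the one-liner ``Thanks to Corollary~\ref{cor: CI}, it is enough to argue analogously to the proof of Corollary~\ref{cor:open Gore},'' and you have spelled out exactly that argument. The only difference is that you justify in detail the preservation of the complete intersection property under Artinian reduction, whereas the paper states this as a known fact at the start of Section~\ref{sec: CI conditions} (``complete intersections are preserved by general linear sections'').
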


\begin{proof}
Thanks to Corollary \ref{cor: CI}, it is enough to argue analogously to the proof of Corollary \ref{cor:open Gore}.
\end{proof}

\begin{example} \label{ex: matrices 2}
We go back to Example \ref{ex: matrices 1} where we  considered the Artinian quasi-stable ideal $J:=(x_1^2,x_0^2)\subseteq \kk[x_0,x_1]$ with $x_0<x_1$. The ideal $J$ is a complete intersection. Hence, the subscheme of $\MFScheme{J}(\mathbb K)$ parameterizing the complete intersections defined by a $\mathcal P_J$-marked basis is non-empty. In order to apply our strategy to compute such subscheme, we now continue to study the matrix $M_3$ already constructed in the previous example.

By a complete reduction process, starting from the matrix $M_3$ as in \eqref{eq:Mt} we get:

$$
\left(\begin{array}{cccc|c}
  1 &{0} & 0 & 0 &\tikzmarknode{n3}{0} \\
  \tikzmarknode{n1}{0} &{1} & 0 & 0 & \tikzmarknode{n2}{1-d_1d_2} \\
 0 &{0} & 1 & 0 & d_1  \\
 0 &{0} &0 &1 & \tikzmarknode{n4}{0}  \\
\end{array}\right)$$
\begin{tikzpicture}[remember picture,overlay,
  highlight/.style={draw=blue,rounded corners, thick, 
                    minimum width=1.5em,minimum height=2.5ex},
  note/.style={font=\small,red,text width=3cm},
  ]
  \draw[highlight] ([yshift=2ex,xshift=2em]pic cs:n3) 
                   rectangle ([yshift=-1ex,xshift=-2em]pic cs:n4) ;
  \draw[highlight] ([yshift=-0.8ex,xshift=-0.8em]pic cs:n1) 
                   rectangle ([yshift=2ex,xshift=2em]pic cs:n2) ;
\end{tikzpicture}
We highlight the second row, which corresponds to the head term of $h_3$, and the last column, which belongs to the second block of columns of $M_3$. We focus on the element $1-d_1d_2$. By Proposition \ref{prop:elim}, if this element is non-null, then $h_3$ depends on $\mathcal H\setminus\{h_3\}$. Hence, from the last column we obtain $x_1h_1=(1-d_1d_2) h_3+d_1x_0h_2$ and so the syzygy $[-x_2,d_1x_1,1-d_1d_2]$. Thus $h_3$ is dependent if and only if $1-d_1d_2\not=0$, i.e.~an ideal generated by a $\mathcal P_J$-marked basis of the given type is a complete intersection if and only if $(1-d_1d_2) \not=~0$.
Note that in this case, being the codimension~$2$, the property to be a complete intersection is equivalent to the property to be Gorenstein.
\end{example}

\begin{remark}
In Examples \ref{ex: matrices 1} and \ref{ex: matrices 2} the matrix $M_3$ does not have rows corresponding to terms in $\mathcal N(J)$ because $\mathcal N(J)_3$ is empty. 

However, to our aims we do not need the rows corresponding to terms of $\mathcal N(J)$ because the non-null constants which we are interested in are elements of other rows, in which the pivots appear, by construction and by Proposition \ref{prop:decomposition}. Hence, the second block of rows can be avoided in the construction of the matrices $M_t$, for every $t$.
\end{remark}

\section{Construction of complete intersections inside a given ideal}
\label{sec:CI}

In this section, we focus on the problem of constructing a complete intersection contained in a given polynomial ideal $I$, with the same height of $I$, even if $I$ is not a complete intersection. Indeed, every ideal $I$ in a Noetherian ring has a set of generators containing a complete intersection with the same height of $I$, even if $I$ is not a complete intersection. 

A classical proof of this statement is in \cite[Chapter VI, Proposition 3.5]{Kunz}, for example, but it does not give an efficient constructive method. The problem is that the knowledge of a primary decomposition is required. For this reason, it is noteworthy that very recently an efficient method to recognize complete intersections has been given in the paper \cite{HAPS}.

In \cite[Section 1]{EiStu1994} the authors gave a computational method to construct a regular sequence in $I$ of length equal to the codimension of $I$, for every polynomial ideal~$I$. However, this method requires a further step to check that some suitable given polynomials are a regular sequence. 

In \cite[Proposition 5.1]{Seiler2012} the author shows that every Pommaret (Gr\"obner) basis contains a complete intersection with the same height of the ideal that the Pommaret basis generates, explicitly exhibiting the complete intersection without the necessity of any computation. Indeed, the polynomials that generate the complete intersection are those in the Pommaret basis with initial term equal to one of the powers of the variables contained in the Pommaret basis of the initial ideal. Even in this case the properties of Gr\"obner bases have a crucial role. Is there an analogous result for marked bases over a quasi-stable ideal?
The following example shows that an analogous result does not hold for marked bases.

\begin{example}\label{ex:example one}
Consider the quasi-stable ideal $J=(x_2^2,x_1^2)\subseteq \mathbb K[x_0,x_1,x_2]$ with $x_0<x_1<x_2$ and the $\mathcal P_J$-marked basis
$\mathcal H=\{h_1:=x_2^2 - x_2x_1 - x_2x_0 + x_1x_0, h_2:=x^2_1 - x_2x_1, h_3:=x_2x_1^2 - 2x_2x_1x_0 + 2x_1x_0^2 \}$. We have $(x_2+x_0)h_2=-x_1h_1 \in (h_1)$, hence $h_2$ is a zero-divisor on $\mathbb K[x_0,x_1,x_2]/(h_1)$ and so $h_1,h_2$ is not a regular sequence, although their head terms are powers of variables. Note that $\mathcal H$ is not a Gr\"obner basis with respect to any term order because $x_2x_1>x_1^2$ and the polynomial $h_2$ is marked on the term $x_1^2$. 
However, $h_1,h_3$ is a regular sequence because $((h_1):(h_3))=(h_1)$ and the ideal generated by $\mathcal H$ is not a complete intersection. Since it has codimension~$2$, it is also not Gorenstein, indeed, thanks to a result of Serre (see \cite{Huneke} and the references therein). 

On the other hand, the different $\mathcal P_J$-marked basis $\mathcal H'=\{h'_1:=x_2^2 - 1/2 x_2x_1 - x_2x_0, h'_2:=x_1^2 - 1/2x_2x_1 - x_1x_0,
h'_3:=x_2x_1^2 - 2x_2x_1x_0\}$ satisfies the expectation that $h'_1,h'_2$ is a regular sequence. In this case the ideal generated by $\mathcal H'$ is a complete intersection and coincides with the ideal generated by $h'_1$, $h'_2$.  
\end{example}

\begin{example}\label{ex: other situation}
We can also have the following situation. Given the quasi-stable ideal $J=(x_3^2, x_2x_3,x_1^2x_3,x_2^4) \subseteq \kk[x_0,x_1,x_2x_3]$ with $x_0<\dots<x_3$, consider the $\mathcal P_{J}$-marked basis $\mathcal H=\{h_1:=x_3^2, h_2:=x_2x_3+x_2^2+2x_1x_2+x_1^2, h_3:=x_1^2x_3-x_2^3-4x_1x_2^2-5x_1^2x_2-2x_1^3, h_4:= x_2^4+4x_1x_2^3+6x_1^2x_2^2+4x_1^3x_2+x_1^4\}$. The polynomials $h_1$ and $h_4$ give the expected regular sequence, but the ideal generated by $\mathcal H$ is the complete intersection generated by the regular sequence $h_1, h_2$. 
\end{example}

Example \ref{ex:example one} does not exclude that a marked basis contains a regular sequence with the length equal to the codimension of the polynomial ideal $I$, even when this regular sequence is not the expected one. 

Hence, the following questions arise: Given a marked basis $\mathcal H$, when does $\mathcal H$ contain a regular sequence of length equal to the height of $(\mathcal H)$ and how can we compute it? Is there a method to compute  a regular sequence of length equal to the height of $(\mathcal H)$ which is contained in $(\mathcal H)$?

We will give a qualitative answer to the above questions in terms of marked bases adapting \cite[Theorem 1.3]{EiStu1994} to the setting of marked bases, instead of Gr\"obner bases. 

{\em From now}, let $J\subseteq R=\mathbb K[x_0,\dots,x_n]$ be a quasi-stable ideal of codimension $c$ and $\mathcal H=\{h_1,\dots,h_t\}$ be a $\mathcal P_J$-marked basis. Let $I$ be the ideal generated by $\mathcal H$. 

\begin{proposition}\label{prop:EiStu proposition}
\cite[Proposition 1.4]{EiStu1994}
Over an infinite field $\kk$, let $\mathcal F_1,\dots,\mathcal F_c \subset R$ be sets of polynomials such that, for every subset $U \subset \{ 1,\dots , c\}$, the set of polynomials $\cup_{i\in U} \mathcal F_i$ generates an ideal of codimension $\geq \vert U \vert$. Then the polynomials
$$f_1=\sum_{f\in \mathcal F_1} r_{1,f} f, \dots, f_c=\sum_{f\in \mathcal F_c} r_{c,f} f$$ 
generate an ideal of codimension $c$ in $R$, for every values of $r_{i,f}$ varying in a suitable non-empty Zariski open subset.
\end{proposition}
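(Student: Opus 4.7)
The plan is to proceed by induction on $c$, combined with a prime-avoidance argument that leverages the infiniteness of $\kk$. The enabling linear-algebraic observation is that, for any finite set $\mathcal F$ of polynomials and any prime ideal $\mathfrak p\subseteq R$, a generic $\kk$-linear combination $\sum_{f\in\mathcal F} r_f f$ lies in $\mathfrak p$ if and only if $\mathcal F$ itself is contained in $\mathfrak p$: otherwise the bad coefficient vectors form a proper $\kk$-subspace of $\kk^{|\mathcal F|}$, whose complement is Zariski open and non-empty over an infinite field.

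The naive induction hypothesis---that $(f_1,\dots,f_k)$ has codimension $k$---does not propagate, since it gives no information on how the minimal primes of the partial ideal interact with the still-unused sets $\mathcal F_{k+1},\dots,\mathcal F_c$. I would therefore prove the following strengthened claim by induction on $k\in\{0,\dots,c\}$: for a generic choice of $(r_{i,f})_{i\le k,\, f\in\mathcal F_i}$ and \emph{every} subset $S\subseteq\{1,\dots,c\}$ containing $\{1,\dots,k\}$, the ideal
\[
K_{k,S}:=(f_i:i\le k)+\Bigl(\bigcup_{j\in S,\, j>k}\mathcal F_j\Bigr)
\]
has codimension at least $|S|$. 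The case $k=0$ is exactly the proposition's hypothesis, while $k=c$ with $S=\{1,\dots,c\}$ yields the desired conclusion, with the reverse inequality coming from Krull's principal ideal theorem.

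For the inductive step $k\to k+1$, I would fix a generic choice at level $k$ and a subset $S\supseteq\{1,\dots,k+1\}$, then set $I':=(f_1,\dots,f_k)+\bigl(\bigcup_{j\in S,\,j>k+1}\mathcal F_j\bigr)$. Applying the level-$k$ hypothesis to $S$ and to $S\setminus\{k+1\}$ yields that $I'+(\mathcal F_{k+1})=K_{k,S}$ has codimension at least $|S|$ while $I'=K_{k,S\setminus\{k+1\}}$ has codimension at least $|S|-1$. A minimal prime of $K_{k+1,S}=I'+(f_{k+1})$ either coincides with a minimal prime $\mathfrak p$ of $I'$ that contains $f_{k+1}$, or strictly contains one; in the latter case its codimension is at least $(|S|-1)+1=|S|$, and in the former the enabling observation forces $\mathcal F_{k+1}\subseteq\mathfrak p$ for generic $r_{k+1,\cdot}$, so $\mathfrak p$ contains $I'+(\mathcal F_{k+1})=K_{k,S}$ and therefore has codimension at least $|S|$. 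Finiteness of the minimal primes of $I'$ and of the subsets $S$ keeps the required Zariski open set in coefficient space non-empty.

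The main obstacle is identifying the correct induction hypothesis: once the threaded family $K_{k,S}$ is in place the inductive step reduces to routine prime avoidance, but the simpler formulation cannot be inherited, because the hypothesis on the $\mathcal F_j$ need not descend to arbitrary minimal primes of the partial ideal. A secondary point of care is checking that, when several subsets $S$ and several minimal primes are treated simultaneously, the finite intersection of the resulting non-empty Zariski opens in coefficient space remains non-empty, which is immediate over the infinite field $\kk$.
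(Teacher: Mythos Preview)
The paper does not supply a proof of this proposition: it is simply quoted as \cite[Proposition~1.4]{EiStu1994} and then used as a black box in the proof of Theorem~\ref{theorem:EiStu theorem}. There is therefore nothing in the paper to compare your argument against.

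Your argument is essentially the standard one and is correct in outline; the strengthened induction hypothesis on the ideals $K_{k,S}$ is exactly the device one needs, and the prime-avoidance step is sound. One point deserves to be made explicit. In the inductive step you fix a point of $U_k$ and then exhibit, for that point, a non-empty Zariski open set of admissible values of $r_{k+1,\cdot}$. This shows that the good locus at level $k+1$ has non-empty fibre over every point of $U_k$, but not yet that it is a non-empty Zariski open subset of the full parameter space $V_{k+1}=\prod_{i\le k+1}\kk^{|\mathcal F_i|}$. To close this, observe that the condition $\mathrm{codim}\,K_{k+1,S}\geq |S|$ is an open condition on $V_{k+1}$, by upper semicontinuity of fibre dimension for the projective family cut out by $K_{k+1,S}$ in $\mathbb P^n\times V_{k+1}$; your fibre argument then establishes non-emptiness. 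Once this is said, the induction goes through as you describe.
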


\begin{theorem}\label{theorem:EiStu theorem}
There exists a non-empty subset $\mathcal K\subseteq \mathcal{H}$ and a partition $\mathcal K=\mathcal K_1\cup\dots\cup \mathcal K_c$ into non-empty subsets, such that the polynomials 
\begin{equation}\label{eq:regular sequence}
f_1=\sum_{f\in \mathcal K_1} r_{1,f} f, \dots, f_c=\sum_{f\in \mathcal K_c} r_{c,f} f
\end{equation} 
generate an ideal of codimension $c$ in $R$ for values of $r_{i,f}$ varying in a suitable Zariski open subset.
\end{theorem}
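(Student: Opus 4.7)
The plan is to apply Proposition~\ref{prop:EiStu proposition} with $\mathcal{K}:=\mathcal{H}$, partitioned according to the maximum variable appearing in the head terms of the elements of $\mathcal{H}$. Setting $d:=n+1-c$, I would first record two well-known structural facts about a quasi-stable ideal $J$ with $\dim(R/J)=d$: every element of $\mathcal{P}_J$ has maximum variable in $\{x_d,\ldots,x_n\}$ (equivalently, $J\subseteq(x_d,\ldots,x_n)$), and for each $j\in\{d,\ldots,n\}$ the minimal pure power $x_j^{e_j}$ lying in $J$ belongs to $\mathcal{P}_J$ (the latter because $x_j^{e_j-1}\notin J$ forbids any nontrivial Pommaret decomposition). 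These facts ensure that the $c$ subsets $\mathcal{K}_i:=\{h_\alpha\in\mathcal{H}:\max(x^\alpha)=x_{d+i-1}\}$, for $i=1,\ldots,c$, are all non-empty and partition $\mathcal{H}$.

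The core of the proof is then verifying the hypothesis of Proposition~\ref{prop:EiStu proposition}: for every non-empty $U\subseteq\{1,\ldots,c\}$, the ideal generated by $\mathcal{H}_U:=\bigcup_{i\in U}\mathcal{K}_i$ has codimension at least $|U|$. I would split this into two stages. First, the monomial ideal $J_U$ generated by the head terms of the polynomials in $\mathcal{H}_U$ contains the pure power $x_{d+i-1}^{e_{d+i-1}}$ for every $i\in U$, so $\sqrt{J_U}$ contains the prime $(x_{d+i-1}:i\in U)$ of codimension $|U|$, forcing $\mathrm{codim}(J_U)\geq|U|$.

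The main obstacle is then lifting this codimension bound from the monomial ideal $J_U$ to the polynomial ideal $(\mathcal{H}_U)$, as we can no longer appeal to Proposition~\ref{prop:known facts}(ii) since $\mathcal{H}_U$ need not be a marked basis for any quasi-stable ideal. Adapting the argument of Proposition~\ref{prop:known facts}(i), I would show that $R_t=(\mathcal{H}_U)_t+\langle\mathcal{N}(J_U)_t\rangle_{\mathbb{K}}$ for every degree $t$: any term lying in $J_U$ is the head term of a multiple of some $h\in\mathcal{H}_U$, and the correction $h-\mathrm{Ht}(h)$ is supported in $\mathcal{N}(J)\subseteq\mathcal{N}(J_U)$, so an iterated rewriting terminates because every graded component is finite-dimensional. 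Comparing dimensions yields the pointwise bound $\dim_{\mathbb{K}}(R/(\mathcal{H}_U))_t\leq\dim_{\mathbb{K}}(R/J_U)_t$, and hence $\mathrm{codim}(\mathcal{H}_U)\geq\mathrm{codim}(J_U)\geq|U|$.

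With all hypotheses verified, Proposition~\ref{prop:EiStu proposition} directly yields, for parameters $r_{i,f}$ in a suitable Zariski open subset, polynomials $f_1,\ldots,f_c$ as in \eqref{eq:regular sequence} generating an ideal of codimension $c$ in $R$, which is the desired conclusion.
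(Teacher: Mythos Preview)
Your partition by \emph{maximum} variable is different from the paper's partition by \emph{minimum} variable, and this difference is exactly where your argument runs into trouble.

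Your lifting step---deducing $\mathrm{codim}(\mathcal{H}_U)\geq\mathrm{codim}(J_U)$ from the decomposition $R_t=(\mathcal{H}_U)_t+\langle\mathcal{N}(J_U)_t\rangle_{\mathbb{K}}$---has a genuine gap. The rewriting you describe need not terminate: after replacing $x^\tau=x^\delta x^\alpha$ by $x^\tau-x^\delta h_\alpha$, the new terms lie in $x^\delta\cdot\mathcal{N}(J)$, which is \emph{not} contained in $\mathcal{N}(J_U)$ (some of those terms may re-enter $J_U$), so one step does not suffice; and ``every graded component is finite-dimensional'' does not rule out cycling. The underlying obstacle is that your $J_U$ is in general \emph{not} quasi-stable: already for $J=(x_1^2,x_2^2)\subset\mathbb{K}[x_0,x_1,x_2]$ and $U=\{1\}$ one gets $J_U=(x_1^2)$, and $x_1x_2^s\notin(x_1^2)$ for every $s$. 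Hence $\mathcal{H}_U$ is not a $\mathcal{P}_{J_U}$-marked set, the Noetherian reduction underlying Proposition~\ref{prop:known facts}(i) is unavailable, and your adaptation of its proof does not go through as written.

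The paper sidesteps this by restricting to the subset $\mathcal{K}=\{h\in\mathcal{H}:\min(\mathrm{Ht}(h))\geq x_{n-c+1}\}$ and partitioning according to $\min(x^\alpha)$. The advantage of using $\min$ is structural: the set $\mathcal{F}=\{x^\alpha\in\mathcal{P}_J:\min(x^\alpha)\geq x_{n-c+1}\}$ is already the Pommaret basis of the quasi-stable ideal it generates, and for each $U$ one may pass to a quotient ring (modding out appropriate variables) in which the images of $\cup_{i\in U}\mathcal{F}_i$ again form a Pommaret basis and the images of $\cup_{i\in U}\mathcal{K}_i$ form a marked set over it. Proposition~\ref{prop:known facts}(i) then applies directly in the quotient to give the needed codimension bound. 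Your max-variable partition does not admit such a quotient: a head term with $\max(x^\alpha)=x_{d+i-1}$ for $i\in U$ can still involve a variable $x_j$ with $d\leq j$ and $j-d+1\notin U$, so killing the ``unused'' variables may annihilate head terms you want to keep.
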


\begin{proof}
Consider the subset $\mathcal F=\{\tau \in \mathcal P_J : \min(\tau) \geq n-c+1\}\subseteq \mathcal P_J$. By construction $\mathcal F$ is the Pommaret basis of the quasi-stable ideal $(\mathcal F)\subseteq J$. Then, the set 
$\mathcal K:=\{h\in \mathcal H : \mathrm{Ht}(h)\in \mathcal F\}$ is a $\mathcal F$-marked set and the codimension of the ideal generated by $\mathcal K$ is higher than or equal to the codimension of the ideal generated by $\mathcal F$ thanks to Proposition \ref{prop:known facts}(i). Observe that the codimension of $(\mathcal F)$ is equal to $c$, by construction.

Following \cite[Remark 5.2]{Seiler2012}, let $\mathcal F_j:=\{\tau\in \mathcal P_J : \min(\tau)=x_{n-j+1}\} \subset (x_{n-j+1})$ and $\mathcal K_j:=\{h\in \mathcal H : \mathrm{Ht}(h)\in \mathcal F_j\}$, for every $j\in \{1,\dots,c\}$. 

Now, we have a partition $\mathcal F=\mathcal F_1\cup\dots\cup \mathcal F_c$ where the sets $\mathcal F_j$ satisfy the hypothesis of Proposition \ref{prop:EiStu proposition}. Indeed, for every subset $U \subset \{ 1,\dots , c\}$, the set of terms $\cup_{i\in U} \mathcal F_i$ generates an ideal of codimension $\geq \vert U \vert$ thanks to the fact that terms of types $x_{n-c+i}^{\alpha_i}$ belong to $\cup_{i\in U} \mathcal F_i$, for every $i\in U$.

For every subset $U \subset \{ 1,\dots , c\}$ consider the set of variables 
$$X_U:=\{x_{n-c+j} : j\not\in U \text{ and } j\geq c\}.$$ Then the image $\cup_{i\in U} \mathcal F_i$ in $R/(X_U)$ is the Pommaret basis of the ideal $J_U$ it generates and the image of $\cup_{i\in U} \mathcal K_i$ is a marked set on this Pommaret basis. Hence, thanks to Proposition \ref{prop:known facts}, the codimension of the ideal $(\cup_{i\in U} \mathcal K_i)$ is higher than or equal to the codimension of $(J_U)$, that is $\vert U\vert$ by construction (see \cite[Chapter 9, Section 1, Proposition 3]{CLO} for an easy computation of the codimension of a monomial ideal).

In conclusion, even the partition $\mathcal K=\mathcal K_1\cup\dots\cup \mathcal K_c$ satisfies the hypothesis of Proposition \ref{prop:EiStu proposition} and the thesis is proved. 
\end{proof}

\begin{remark}\label{rem:corollary:due}
It is clear that a $\mathcal P_J$-marked basis $\mathcal H$ contains a complete intersection of codimension~$c$ if Theorem \ref{theorem:EiStu theorem} gives a regular sequence of type \eqref{eq:regular sequence} such that every coefficient in each polynomial $f_i$ is null, except one. 
\end{remark}

\begin{remark}\label{rem:uno}
The partition $\mathcal K=\mathcal K_1\cup\dots\cup \mathcal K_c$ that has been constructed in the proof of Theorem \ref{theorem:EiStu theorem} satisfies the property that $\cup_{i \leq s} \mathcal K_i$ is a marked set which generates an ideal of codimension~$\geq s$, for every $1\leq s< c$, and equal to $c$ if $s=c$.
\end{remark}

In conclusion, together with the application of one of the already known methods to check if a sequence of polynomials is a regular sequence, the above results provide an algorithm to compute a regular sequence in $I$ with the same height of $I$, starting from a marked basis of $I$. This algorithm is an adaptation to marked bases of an algorithm obtained for Gr\"obner bases in the paper \cite{EiStu1994}. 

Although from the proof of \cite[Proposition 1.4]{EiStu1994} it can be deduced that a description of the non-empty Zariski open subset involved in Theorem~\ref{theorem:EiStu theorem} should need irreducible decomposition of varieties and thus is computationally expensive, some examples can be worked out.

\begin{example}
Consider the quasi-stable ideal $J=(x_3^3, x_2x_3^2,$ $x_2^2x_3, x_1x_3^2, x_1x_2x_3,$ $x_0x_3^2, x_1^2x_3, x_0x_2x_3, x_2^4)\subseteq \mathbb R[x_0,\dots,x_3]$ with $x_0<\dots<x_3$ and the ideal $I$ generated by the following $\mathcal P_J$- marked basis

$\mathcal H=\{ x_3^3,  \ x_2x_3^2,  \ x_2^2x_3 + x_2^3 + 2x_1x_2^2 + x_1^2x_2, \ x_1x_3^2,  \ x_1x_2x_3 + x_1x_2^2 + 2x_1^2x_2 + x_1^3,$ 

$\ x_0x_3^2, \ x_1^2x_3 - x_2^3 - 4x_1x_2^2 - 5x_1^2x_2 - 2x_1^3,
\ x_0x_2x_3 + x_0x_2^2 + 2x_0x_1x_2 + x_0x_1^2,$

$\ x_2^4 + 4x_1x_2^3 + 6x_1^2x_2^2 + 4x_1^3x_2 + x_1^4
\}.$

\noindent With the notation introduced in the proof of Theorem \ref{theorem:EiStu theorem}, we have:

$\mathcal K=\{h_1=x_3^3,  \ h_2=x_2x_3^2,  \ h_3=x_2^2x_3 + x_2^3 + 2x_1x_2^2 + x_1^2x_2,  h_4=x_2^4 + 4x_1x_2^3 + 6x_1^2x_2^2 + 4x_1^3x_2 + x_1^4\}$

$\mathcal K_1=\{x_3^3\},\quad \mathcal K_2=\{x_2x_3^2,  \ x_2^2x_3 + x_2^3 + 2x_1x_2^2 + x_1^2x_2,  x_2^4 + 4x_1x_2^3 + 6x_1^2x_2^2 + 4x_1^3x_2 + x_1^4\}$.

\noindent Like we already observed in Remark \ref{rem:uno}, $\mathcal K=\mathcal K_1 \cup \mathcal K_2$ is a marked set on the quasi-stable ideal $\bar J=(x_3^3, x_2x_3^2,  x_2^2x_3, x_2^4)$, but it is not a marked basis. Indeed, a reduced form of $x_3\cdot h_3$ by $\mathcal K$ is not $0$, but $x_1^2x_2x_3-4x_1^2x_2^2-2x_1^3x_2$.

Moreover, we highlight that if a sequence of polynomials of $I$ is a regular sequence, then it is not necessarily characterized by the shape of the polynomials given in \eqref{eq:regular sequence}. For example, $h_1+h_2, h_4$ is a regular sequence of $I$ of length equal to the codimension of $I$, but it does not have the shape of \eqref{eq:regular sequence}. 
\end{example}

\section*{Acknowledgments}
The first and second authors are members of GNSAGA (INdAM, Italy). The first author was partially supported by the project \lq\lq Metodi matematici nelle scienze computazionali\rq\rq (Dipartimento di Matematica, University of Turin) and with the second author by the Institute of Mathematics of the University of Kassel. The authors thank
the Institute of Mathematics of the University of Kassel and 
Università degli Studi di Napoli Federico II for their hospitality during the preparation of this paper.

\appendix
\section{Data for Example \ref{ex:Gore 2}}
The equations defining the marked scheme of $J$ in Example \ref{ex:Gore 2} are the following ones, where the $d_{i,j}$s are the coefficients of the marked polynomials of the set $\mathcal H$ listed in~\eqref{eq:exNoGore2MS}:
\begin{equation}\label{eq:exGore2MF}
\begin{gathered}
d_{{1,1}}d_{{2,1}}d_{{4,1}}+d_{{1,1}}d_{{2,2}}d_{{5,1}}-d_{{1,1}}d_{{2,3}}-d_{{1,3}}d_{{6,1}}+d_{{1,2}}-d_{{4,1}}=0,\\
d_{{1,3}}d_{{2,1}}d_{{3,1}}d_{{4,1}}+d_{{1,3}}d_{{2,2}}d_{{3,1}}d_{{5,1}}-d_{{1,2}}d_{{3,1}}d_{{4,1}}-d_{{1,2}}d_{{3,2}}d_{{5,1}}+d_{{1,3}}d_{{2,3}}d_{{3,1}}+\\
-d_{{1,3}}d_{{3,3}}d_{{6,1}}-d_{{1,2}}d_{{3,3}}-d_{{1,3}}d_{{3,2}}+d_{{1,1}}+d_{{5,1}}=0,\\
-d_{{2,1}}d_{{2,3}}d_{{3,1}}d_{{4,1}}-d_{{2,2}}d_{{2,3}}d_{{3,1}}d_{{5,1}}+d_{{2,2}}d_{{3,1}}d_{{4,1}}+d_{{2,2}}d_{{3,2}}d_{{5,1}}+
{d_{{2,3}}}^{2}d_{{3,1}}+\\
+d_{{2,3}}d_{{3,3}}d_{{6,1}}-d_{{2,2}}d_{{3,3}}-d_{{2,3}}d_{{3,2}}+d_{{2,1}}-d_{{6,1}}=0.
\end{gathered}
\end{equation}
The coefficient matrix of the system $\Sigma_{\mathcal H}$ is the following one, where we are omitting some zero rows:
\begin{equation}\label{eq:exNoGore2Matrix}    
 \left[\begin{array}{cccc}
d_{1,3} & 0 & 0 & 0 
\\
 -d_{1,1} d_{2,1}+1 & 0 & 0 & 0 
\\
 -d_{1,1} d_{2,2} & 0 & 0 & 0 
\\
 0 & d_{2,1} d_{4,1}+d_{2,2} d_{5,1}-d_{4,1} d_{6,1}+d_{2,3} & -d_{5,1} d_{6,1}+1 & 0 
\\
 -d_{1,3} d_{2,1} d_{3,1}-d_{1,2} d_{3,1} & 0 & 0 & 0 
\\
 -d_{1,3} d_{2,2} d_{3,1}-d_{1,2} d_{3,2}+1 & 0 & 0 & 0 
\\
 0 & \Delta_{7,2}& \Delta_{7,3} & 0 
\end{array}\right]
\end{equation}
with
\[
\Delta_{7,2}=-d_{2,1} d_{3,1} d_{4,1}^{2}-d_{2,2} d_{3,1} d_{4,1} d_{5,1}-d_{2,3} d_{3,1} d_{4,1}-d_{3,3} d_{4,1} d_{6,1}-d_{3,2} d_{4,1}+1,
\]
\[\Delta_{7,3}=-d_{2,1} d_{3,1} d_{4,1} d_{5,1}-d_{2,2} d_{3,1} d_{5,1}^{2}-d_{2,3} d_{3,1} d_{5,1}-d_{3,3} d_{5,1} d_{6,1}+d_{3,1} d_{4,1}+d_{3,3}.\]
The matrix \eqref{eq:exNoGore2Matrix} of the system $\Sigma_{\mathcal H}$ has the following five non-zero minors of order~$3$:
\small{
\begin{equation}\label{eq:exGore2NoGore}
\begin{split}
&\left(d_{1,1} d_{2,1}-1\right) \left(d_{2,1}^{2} d_{3,1} d_{4,1}^{2} d_{5,1}+2 d_{2,1} d_{2,2} d_{3,1} d_{4,1} d_{5,1}^{2}+d_{2,2}^{2} d_{3,1} d_{5,1}^{3}+2 d_{2,1} d_{2,3} d_{3,1} d_{4,1} d_{5,1}+\right.\\
&d_{2,1} d_{3,3} d_{4,1} d_{5,1} d_{6,1}+2 d_{2,2} d_{2,3} d_{3,1} d_{5,1}^{2}+d_{2,2} d_{3,3} d_{5,1}^{2} d_{6,1}-2 d_{2,1} d_{3,1} d_{4,1}^{2}-2 d_{2,2} d_{3,1} d_{4,1} d_{5,1}+\\
&+d_{2,3}^{2} d_{3,1} d_{5,1}+d_{2,3} d_{3,3} d_{5,1} d_{6,1}+d_{3,1} d_{4,1}^{2} d_{6,1}+d_{3,2} d_{4,1} d_{5,1} d_{6,1}-d_{2,1} d_{3,3} d_{4,1}+\\
&\left.-d_{2,2} d_{3,3} d_{5,1}-2 d_{2,3} d_{3,1} d_{4,1}-d_{2,3} d_{3,3}-d_{3,2} d_{4,1}-d_{5,1} d_{6,1}+1\right)
, \\
&d_{1,1} d_{2,2} \left(d_{2,1}^{2} d_{3,1} d_{4,1}^{2} d_{5,1}+2 d_{2,1} d_{2,2} d_{3,1} d_{4,1} d_{5,1}^{2}+d_{2,2}^{2} d_{3,1} d_{5,1}^{3}+2 d_{2,1} d_{2,3} d_{3,1} d_{4,1} d_{5,1}+\right.\\
&+d_{2,1} d_{3,3} d_{4,1} d_{5,1} d_{6,1}+2 d_{2,2} d_{2,3} d_{3,1} d_{5,1}^{2}+d_{2,2} d_{3,3} d_{5,1}^{2} d_{6,1}-2 d_{2,1} d_{3,1} d_{4,1}^{2}+\\
&-2 d_{2,2} d_{3,1} d_{4,1} d_{5,1}+d_{2,3}^{2} d_{3,1} d_{5,1}+d_{2,3} d_{3,3} d_{5,1} d_{6,1}+d_{3,1} d_{4,1}^{2} d_{6,1}+d_{3,2} d_{4,1} d_{5,1} d_{6,1}+\\
&\left.-d_{2,1} d_{3,3} d_{4,1}-d_{2,2} d_{3,3} d_{5,1}-2 d_{2,3} d_{3,1} d_{4,1}-d_{2,3} d_{3,3}-d_{3,2} d_{4,1}-d_{5,1} d_{6,1}+1\right)
, \\
&-d_{1,3} \left(d_{2,1}^{2} d_{3,1} d_{4,1}^{2} d_{5,1}+2 d_{2,1} d_{2,2} d_{3,1} d_{4,1} d_{5,1}^{2}+d_{2,2}^{2} d_{3,1} d_{5,1}^{3}+2 d_{2,1} d_{2,3} d_{3,1} d_{4,1} d_{5,1}+\right.\\
&+d_{2,1} d_{3,3} d_{4,1} d_{5,1} d_{6,1}+2 d_{2,2} d_{2,3} d_{3,1} d_{5,1}^{2}+d_{2,2} d_{3,3} d_{5,1}^{2} d_{6,1}-2 d_{2,1} d_{3,1} d_{4,1}^{2}+\\
&-2 d_{2,2} d_{3,1} d_{4,1} d_{5,1}+d_{2,3}^{2} d_{3,1} d_{5,1}+d_{2,3} d_{3,3} d_{5,1} d_{6,1}+d_{3,1} d_{4,1}^{2} d_{6,1}+d_{3,2} d_{4,1} d_{5,1} d_{6,1}+\\
&\left.-d_{2,1} d_{3,3} d_{4,1}-d_{2,2} d_{3,3} d_{5,1}-2 d_{2,3} d_{3,1} d_{4,1}-d_{2,3} d_{3,3}-d_{3,2} d_{4,1}-d_{5,1} d_{6,1}+1\right)
, \\
&-\left(d_{1,3} d_{2,2} d_{3,1}+d_{1,2} d_{3,2}-1\right) \left(d_{2,1}^{2} d_{3,1} d_{4,1}^{2} d_{5,1}+2 d_{2,1} d_{2,2} d_{3,1} d_{4,1} d_{5,1}^{2}+d_{2,2}^{2} d_{3,1} d_{5,1}^{3}+\right.\\
&+2 d_{2,1} d_{2,3} d_{3,1} d_{4,1} d_{5,1}+d_{2,1} d_{3,3} d_{4,1} d_{5,1} d_{6,1}+2 d_{2,2} d_{2,3} d_{3,1} d_{5,1}^{2}+d_{2,2} d_{3,3} d_{5,1}^{2} d_{6,1}+\\
&-2 d_{2,1} d_{3,1} d_{4,1}^{2}-2 d_{2,2} d_{3,1} d_{4,1} d_{5,1}+d_{2,3}^{2} d_{3,1} d_{5,1}+d_{2,3} d_{3,3} d_{5,1} d_{6,1}+d_{3,1} d_{4,1}^{2} d_{6,1}+\\
&\left.d_{3,2} d_{4,1} d_{5,1} d_{6,1}-d_{2,1} d_{3,3} d_{4,1}-d_{2,2} d_{3,3} d_{5,1}-2 d_{2,3} d_{3,1} d_{4,1}-d_{2,3} d_{3,3}-d_{3,2} d_{4,1}+\right.\\
&\left.-d_{5,1} d_{6,1}+1\right)
, \\
&-d_{3,1} \left(d_{1,3} d_{2,1}+d_{1,2}\right) \left(d_{2,1}^{2} d_{3,1} d_{4,1}^{2} d_{5,1}+2 d_{2,1} d_{2,2} d_{3,1} d_{4,1} d_{5,1}^{2}+d_{2,2}^{2} d_{3,1} d_{5,1}^{3}+\right.\\
&+2 d_{2,1} d_{2,3} d_{3,1} d_{4,1} d_{5,1}+d_{2,1} d_{3,3} d_{4,1} d_{5,1} d_{6,1}+2 d_{2,2} d_{2,3} d_{3,1} d_{5,1}^{2}+d_{2,2} d_{3,3} d_{5,1}^{2} d_{6,1}+\\
&-2 d_{2,1} d_{3,1} d_{4,1}^{2}-2 d_{2,2} d_{3,1} d_{4,1} d_{5,1}+d_{2,3}^{2} d_{3,1} d_{5,1}+d_{2,3} d_{3,3} d_{5,1} d_{6,1}+d_{3,1} d_{4,1}^{2} d_{6,1}+\\
&\left.d_{3,2} d_{4,1} d_{5,1} d_{6,1}-d_{2,1} d_{3,3} d_{4,1}-d_{2,2} d_{3,3} d_{5,1}-2 d_{2,3} d_{3,1} d_{4,1}-d_{2,3} d_{3,3}-d_{3,2} d_{4,1}+\right.\\
&\left.-d_{5,1} d_{6,1}+1\right).
\end{split}
\end{equation}
}


\begin{thebibliography}{10}

\bibitem{AbCo}
Patience Ablett and Stephen Coughlan, \emph{Deformations of half-canonical {G}orenstein curves in codimension four}, J. Symbolic Comput. \textbf{121}
  (2024), Paper No. 102251, 27.

\bibitem{Quot}
Mario Albert, Cristina Bertone, Margherita Roggero, and Werner~M. Seiler, \emph{Computing {Q}uot schemes via marked bases over quasi-stable modules}, J. Algebra \textbf{550} (2020), 432--470.

\bibitem{Artin}
Michael Artin, \emph{Lectures on deformations of singularities}, Tata Institute on Fundamental Research, Bombay, 1976, Notes by C. S. Seshadri and Allen Tannenbaum.

\bibitem{Bass}
Hyman Bass, \emph{On the ubiquity of {G}orenstein rings}, Math. Z. \textbf{82} (1963), 8--28.

\bibitem{Beintema}
Mark~B. Beintema, \emph{A note on {A}rtinian {G}orenstein algebras defined by monomials}, Rocky Mountain J. Math. \textbf{23} (1993), no.~1, 1--3.

\bibitem{BC2022}
Cristina Bertone and Francesca Cioffi, \emph{The close relation between border and {P}ommaret marked bases}, Collect. Math. \textbf{73} (2022), no.~2, 181--201.

\bibitem{BCF}
Cristina Bertone, Francesca Cioffi, and Davide Franco, \emph{Functors of liftings of projective schemes}, J. Symbolic Comput. \textbf{94} (2019), 105--125.

\bibitem{BCRAffine}
Cristina Bertone, Francesca Cioffi, and Margherita Roggero, \emph{Macaulay-like marked bases}, J. Algebra Appl. \textbf{16} (2017), no.~5, 1750100, 36.

\bibitem{Gore}
\bysame, \emph{Smoothable {G}orenstein points via marked schemes and double-generic initial ideals}, Exp. Math. \textbf{31} (2022), no.~1, 120--137.

\bibitem{NS1}
C.~Bocci, G.~Dalzotto, R.~Notari, and M.~L. Spreafico, \emph{An iterative construction of {G}orenstein ideals}, Trans. Amer. Math. Soc. \textbf{357} (2005), no.~4, 1417--1444.

\bibitem{BE-Gore}
David~A. Buchsbaum and David Eisenbud, \emph{Algebra structures for finite free resolutions, and some structure theorems for ideals of codimension {$3$}}, Amer. J. Math. \textbf{99} (1977), no.~3, 447--485.

\bibitem{CMR2015}
Michela Ceria, Teo Mora, and Margherita Roggero, \emph{Term-ordering free involutive bases}, J. Symbolic Comput. \textbf{68} (2015), no.~part 2, 87--108.

\bibitem{CMR}
\bysame, \emph{A general framework for {N}oetherian well ordered polynomial reductions}, J. Symbolic Comput. \textbf{95} (2019), 100--133.

\bibitem{C1999}
Francesca Cioffi, \emph{Minimally generating ideals of points in polynomial time using linear algebra}, Ricerche Mat. \textbf{48} (1999), no.~1, 55--63.

\bibitem{CR}
Francesca Cioffi and Margherita Roggero, \emph{Flat families by strongly stable ideals and a generalization of {G}r\"{o}bner bases}, J. Symbolic Comput. \textbf{46} (2011), no.~9, 1070--1084.

\bibitem{CLO}
David~A. Cox, John Little, and Donal O'Shea, \emph{Ideals, varieties, and algorithms}, fourth ed., Undergraduate Texts in Mathematics, Springer, Cham, 2015, An introduction to computational algebraic geometry and commutative algebra.

\bibitem{Ei}
David Eisenbud, \emph{Commutative algebra}, Graduate Texts in Mathematics, vol. 150, Springer-Verlag, New York, 1995, With a view toward algebraic geometry.

\bibitem{EiStu1994}
David Eisenbud and Bernd Sturmfels, \emph{Finding sparse systems of parameters}, J. Pure Appl. Algebra \textbf{94} (1994), no.~2, 143--157.

\bibitem{Elias-Rossi}
J.~Elias and M.~E. Rossi, \emph{A constructive approach to one-dimensional {G}orenstein {${\mathbf{k}}$}-algebras}, Trans. Amer. Math. Soc. \textbf{374} (2021), no.~7, 4953--4971.

\bibitem{FGLM}
J.~C. Faug{\`e}re, P.~Gianni, D.~Lazard, and T.~Mora, \emph{Efficient computation of zero-dimensional {G}r\"obner bases by change of ordering}, J. Symbolic Comput. \textbf{16} (1993), no.~4, 329--344.

\bibitem{GM}
Silvio Greco and Maria~Grazia Marinari, \emph{Nagata's criterion and openness of loci for {G}orenstein and complete intersection}, Math. Z. \textbf{160} (1978), no.~3, 207--216.

\bibitem{Gro}
A.~Grothendieck, \emph{\'{E}l\'ements de g\'eom\'etrie alg\'ebrique. {IV}. \'{E}tude locale des sch\'emas et des morphismes de sch\'emas {IV}}, Inst. Hautes \'Etudes Sci. Publ. Math. (1967), no.~32, 361.

\bibitem{Hart-book}
Robin Hartshorne, \emph{Algebraic geometry}, Graduate Texts in Mathematics, vol. No. 52, Springer-Verlag, New York-Heidelberg, 1977. 

\bibitem{Hart-Def}
\bysame, \emph{Deformation theory}, Graduate Texts in Mathematics, vol. 257, Springer, New York, 2010.

\bibitem{HAPS}
A.~Hashemi, B.~M. Alizadeh, H.~Parnian, and W.~M. Seiler, \emph{Single exponential time algorithm for homogeneous regular sequence tests}, Accepted for publication on J. Algebra and its Applications, https://doi.org/10.1142/S0219498824501457.

\bibitem{HSS}
A.~Hashemi, M.~Schweinfurter, and W.~M. Seiler, \emph{Deterministic genericity for polynomial ideals}, J. Symbolic Comput. \textbf{86} (2018), no.~3, 20--50.

\bibitem{HM2011}
M.~E. Huibregtse, \emph{Some syzygies of the generators of the ideal of a border basis scheme}, Collect. Math. \textbf{62} (2011), no.~3, 341--366.

\bibitem{Huneke}
C.~Huneke, \emph{Hyman {B}ass and ubiquity: Gorenstein rings}, Contemporary Math. \textbf{243} (1999), 55--78.

\bibitem{IK}
Anthony Iarrobino and Vassil Kanev, \emph{Power sums, {G}orenstein algebras, and determinantal loci}, Lecture Notes in Mathematics, vol. 1721, Springer-Verlag, Berlin, 1999, Appendix C by Iarrobino and Steven L. Kleiman.

\bibitem{KK2005}
A.~Kehrein and M.~Kreuzer, \emph{Characterizations of border bases}, J. Pure Appl. Algebra \textbf{196} (2005), no.~2-3, 251--270.

\bibitem{Kimura}
Kaito Kimura, \emph{Openness of various loci over {N}oetherian rings}, J. Algebra \textbf{633} (2023), 403--424.

\bibitem{KRvol2}
M.~Kreuzer and L.~Robbiano, \emph{Computational commutative algebra. 2}, Springer-Verlag, Berlin, 2005.

\bibitem{KLR2020}
Martin Kreuzer, Le~Ngoc Long, and Lorenzo Robbiano, \emph{Computing subschemes of the border basis scheme}, Internat. J. Algebra Comput. \textbf{30} (2020), no.~8, 1671--1716.

\bibitem{KLR2022}
\bysame, \emph{Algorithms for checking zero-dimensional complete intersections}, J. Commut. Algebra \textbf{14} (2022), no.~1, 61--76.

\bibitem{Kunz}
Ernst Kunz, \emph{Introduction to commutative algebra and algebraic geometry}, Birkh\"auser Boston, Inc., Boston, MA, 1985, Translated from the German by Michael Ackerman, With a preface by David Mumford.

\bibitem{LR2}
Paolo Lella and Margherita Roggero, \emph{On the functoriality of marked families}, J. Commut. Algebra \textbf{8} (2016), no.~3, 367--410.

\bibitem{MMM}
M.~G. Marinari, H.~M. M\"{o}ller, and T.~Mora, \emph{Gr\"{o}bner bases of ideals defined by functionals with an application to ideals of projective points}, Appl. Algebra Engrg. Comm. Comput. \textbf{4} (1993), no.~2, 103--145.

\bibitem{Mi}
Juan~C. Migliore, \emph{Introduction to liaison theory and deficiency modules}, Progress in Mathematics, vol. 165, Birkh\"auser Boston, Inc., Boston, MA, 1998.

\bibitem{NS2}
R.~M. Mir\'{o}-Roig, R.~Notari, and M.~L. Spreafico, \emph{Properties of some {A}rtinian {G}orenstein rings}, Rend. Semin. Mat. Univ. Politec. Torino \textbf{64} (2006), no.~4, 451--470.

\bibitem{Mourrain1999}
B.~Mourrain, \emph{A new criterion for normal form algorithms}, Applied algebra, algebraic algorithms and error-correcting codes ({H}onolulu, {HI}, 1999), Lecture Notes in Comput. Sci., vol. 1719, Springer, Berlin, 1999, pp.~430--443.

\bibitem{Matthias}
Matthias Orth, \emph{Constructive theory of inverse systems}, Master's thesis, Institut f\"ur Mathematik, Universit\"at Kassel, 2016.

\bibitem{Reid2015}
M.~Reid, \emph{Gorenstein in codimension 4: the general structure theory}, Algebraic Geometry in East Asia—Taipei 2011, Mathematical Society of Japan, 2015, p.~201–227.

\bibitem{SaSt}
Matthew Satriano and Andrew~P. Staal, \emph{Small elementary components of {H}ilbert schemes of points}, Forum Math. Sigma \textbf{11} (2023), Paper No. e45, 36.

\bibitem{Seiler2009II}
Werner~M. Seiler, \emph{A combinatorial approach to involution and {$\delta$}-regularity. {II}. {S}tructure analysis of polynomial modules with {P}ommaret bases}, Appl. Algebra Engrg. Comm. Comput. \textbf{20} (2009), no.~3-4, 261--338.

\bibitem{Seiler2012}
\bysame, \emph{Effective genericity, {$\delta$}-regularity and strong {N}oether position}, Comm. Algebra \textbf{40} (2012), no.~10, 3933--3949.

\bibitem{Stoia}
Manuela Stoia, \emph{Points de {G}orenstein d'un morphisme}, C. R. Acad. Sci. Paris S\'er. A-B \textbf{281} (1975), no.~20, Aii, A847--A849.

\end{thebibliography}

\providecommand{\bysame}{\leavevmode\hbox to3em{\hrulefill}\thinspace}
\providecommand{\MR}{\relax\ifhmode\unskip\space\fi MR }
\providecommand{\MRhref}[2]{%
  \href{http://www.ams.org/mathscinet-getitem?mr=#1}{#2}
}
\providecommand{\href}[2]{#2}

\end{document}